\theoremstyle{plain}
\newtheorem{theorem}{Theorem}[section]
\newtheorem*{theorem*}{Theorem}
\newtheorem{proposition}[theorem]{Proposition}
\newtheorem{corollary}[theorem]{Corollary}
\newtheorem{lemma}[theorem]{Lemma}
\theoremstyle{definition}
\newtheorem{definition}[theorem]{Definition}
\newtheorem{remark}[theorem]{Remark}
\newcommand{\enm}[1]{\ensuremath{#1}}          %
\newcommand{\op}[1]{\operatorname{#1}}
\newcommand{\cal}[1]{\mathcal{#1}}
\newcommand{\CC}{\enm{\mathbb{C}}}
\newcommand{\II}{\enm{\mathbb{I}}}
\newcommand{\NN}{\enm{\mathbb{N}}}
\newcommand{\QQ}{\enm{\mathbb{Q}}}
\newcommand{\GG}{\enm{\mathbb{G}}}
\newcommand{\ZZ}{\enm{\mathbb{Z}}}
\renewcommand{\AA}{\enm{\mathbb{A}}}
\renewcommand{\II}{\enm{\mathbb{I}}}
\newcommand{\PP}{\enm{\mathbb{P}}}
\newcommand{\Aa}{\enm{\cal{A}}}
\newcommand{\Cc}{\enm{\cal{C}}}
\newcommand{\Dd}{\enm{\cal{D}}}
\newcommand{\Ee}{\enm{\cal{E}}}
\newcommand{\Ff}{\enm{\cal{F}}}
\newcommand{\Gg}{\enm{\cal{G}}}
\newcommand{\Hh}{\enm{\cal{H}}}
\newcommand{\Ii}{\enm{\cal{I}}}
\newcommand{\Ll}{\enm{\cal{L}}}
\newcommand{\Oo}{\enm{\cal{O}}}
\newcommand{\Tt}{\enm{\cal{T}}}
\renewcommand{\phi}{\varphi}
\renewcommand{\theta}{\vartheta}
\renewcommand{\epsilon}{\varepsilon}
\newcommand{\Hom}{\op{Hom}}
\newcommand{\Ext}{\op{Ext}}
\newcommand{\Supp}{\op{Supp}}
\renewcommand{\to}[1][]{\xrightarrow{\ #1\ }}
\newcommand{\old}[1]{}
\begin{document}

\title[Double lines on quadric hypersurfaces]{Double lines on quadric hypersurfaces}

\author{Edoardo Ballico and Sukmoon Huh}

\address{Universit\`a di Trento, 38123 Povo (TN), Italy}
\email{edoardo.ballico@unitn.it}

\address{Sungkyunkwan University, Suwon 440-746, Korea}
\email{sukmoonh@skku.edu}

\keywords{Pure sheaf, locally Cohen-Macaulay curve, Hilbert scheme}
\thanks{The first author is partially supported by MIUR and GNSAGA of INDAM (Italy). }

\subjclass[2010]{Primary: {14H10}; Secondary: {14D22, 14E05}}

\begin{abstract}
We study double line structures in projective spaces and quadric hypersurfaces, and investigate the geometry of irreducible components of Hilbert scheme of curves and moduli of stable sheaves of pure dimension $1$ on a smooth quadric threefold.  
\end{abstract}

\maketitle

\tableofcontents

\section{Introduction}
Let $X$ be a smooth projective variety over $\CC$, the field of complex numbers. For a fixed polynomial $\chi(t)$, C. Simpson introduced in \cite{Simp} the coarse projective moduli spaces $\mathbf{M}_{\chi(t)}(X)$ of semistable sheaves with the Hilbert polynomial $\chi(t)$. If the degree $\chi(t)$ is $d$, then the support of each sheaf in $\mathbf{M}_{\chi(t)}(X)$ is $d$-dimensional subvariety of $X$. The study on these moduli spaces give inspiration on the study of Hilbert schemes $\mathbf{Hilb}_{\chi(t)}(X)$ of curves on $X$ with the Hilbert polynomial $\chi(t)$, because certain components of $\mathbf{M}_{\chi(t)}(X)$ can be viewed as compactifications of an open part of the corresponding $\mathbf{Hilb}_{\chi(t)}(X)$. There have been several investigations on the relation of these spaces in the case of projective spaces $X=\PP^n$, e.g. \cite{CCM, FT,LP}. 

In this paper we first investigate rational ribbons, i.e. double structures on $\PP^1$. Rational ribbons and their canonical embeddings were studied in \cite{be} and we show several facts on the Hilbert scheme of double lines in projective spaces and $n$-dimensional quadric hypersurfaces $Q_n$. Note that the study on the families of double lines in projective spaces is done in \cite{NNS}. Then we describe  birational properties of $\mathbf{M}_{\chi(t)}(Q_3)$ and $\mathbf{Hilb}_{\chi(t)}(Q_3)$ when the leading coefficient of a linear polynomial $\chi(t)$ is two. A double line in $Q_n$ is a locally Cohen-Macaulay curve with the Hilbert polynomial $\chi(t)=2t+a$ for some constant $a$ and  a line as its reduction. The information on the set of double lines turns out to be very valuable in describing Simpson's moduli spaces and Hilbert schemes. Letting $\Dd_a$ be the subscheme consisting of double lines in $Q_3$ with the Euler characteristic $a$, we observe that $\Dd_1$ consists of reducible and non-reduced conics on $Q_3$. Indeed $\mathbf{Hilb}_{2t+1}(Q_3)$ consists of the conics on $Q_3\subset \PP^4$ and we observe the following:

\begin{theorem}
We have $\mathbf{Hilb}_{2t+1}(Q_3) \cong \mathbf{M}_{2t+1}(Q_3) \cong \mathrm{Gr}(3,5)$, the Grassmannian variety parametrizing projective planes in $\PP^4$.
\end{theorem}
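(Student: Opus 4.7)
The plan is to construct mutually inverse morphisms connecting the three spaces, with $\mathrm{Gr}(3,5)$ as the master model. First, the universal plane $\Uu\subset\PP^4\times\mathrm{Gr}(3,5)$, intersected with $Q_3\times\mathrm{Gr}(3,5)$, is flat over $\mathrm{Gr}(3,5)$: since a smooth quadric threefold contains no $2$-plane (the maximal linear subspaces of a smooth $Q_n$ have dimension $\lfloor n/2\rfloor$), $\Pi\cap Q_3$ is a divisor of degree $2$ in every plane $\Pi\subset\PP^4$. This yields a morphism $\Psi\colon\mathrm{Gr}(3,5)\to\mathbf{Hilb}_{2t+1}(Q_3)$. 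For the inverse, any $C\in\mathbf{Hilb}_{2t+1}(Q_3)$ is a conic of degree $2$ and arithmetic genus $0$ (smooth plane conic, two distinct intersecting lines, or a planar double line), contained in a unique plane $\langle C\rangle\subset\PP^4$; this gives a morphism $\Phi$ in the opposite direction. The equality $\langle C\rangle\cap Q_3=C$ (both are degree-$2$ divisors in $\langle C\rangle$, the first containing the second) shows $\Psi$ and $\Phi$ are mutually inverse on closed points, and together with the universal property of the Hilbert scheme this promotes them to an isomorphism of schemes.

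Next I would relate $\mathbf{Hilb}_{2t+1}(Q_3)$ to $\mathbf{M}_{2t+1}(Q_3)$ via $\Theta\colon C\mapsto\Oo_C$, induced by the universal family on the Hilbert scheme. To see $\Theta$ is well-defined, I check that $\Oo_C$ is Gieseker-stable for every conic $C$: any pure proper subsheaf $G\subsetneq\Oo_C$ has multiplicity $1$ with reduced Hilbert polynomial $t+a$ for some $a\le 0$, strictly below $t+1/2$, the reduced Hilbert polynomial of $\Oo_C$. Surjectivity on closed points requires classifying all semistable $F$ with $\chi(F(t))=2t+1$. Purity and multiplicity $2$ leave two cases for the scheme-theoretic support $C$ of $F$: either $C$ is a conic and $F$ has generic rank $1$, or $C$ is a line and $F$ has generic rank $2$. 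The latter is impossible, because such an $F$ on $\PP^1$ splits as $\Oo_{\PP^1}(a)\dirsum\Oo_{\PP^1}(b)$ with $a+b=-1$, which is never semistable. In the former case, $F$ is a rank-$1$ torsion-free sheaf on the conic $C$ with $\chi(F)=1=\chi(\Oo_C)$; a direct Riemann--Roch argument yields $h^0(F)\ge 1$, and any nonzero map $\Oo_C\to F$ must be an isomorphism, by stability of $\Oo_C$ and coincidence of Hilbert polynomials. Hence $\Theta$ is bijective on closed points.

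The main obstacle will be promoting this bijection to an isomorphism of schemes. I would exploit the identification $\mathbf{Hilb}_{2t+1}(Q_3)\cong\mathrm{Gr}(3,5)$ to see that the source is smooth and irreducible, so $\Theta$ is a proper bijective morphism from a smooth variety to the Simpson moduli space; Zariski's main theorem then gives an isomorphism of $\mathbf{Hilb}_{2t+1}(Q_3)$ onto its image, and the image exhausts $\mathbf{M}_{2t+1}(Q_3)$ by the surjectivity statement above. The subtle point inside this argument is the analysis of reducible and non-reduced conics, where additional rank-$1$ torsion-free sheaves exist (for instance line bundles on a nodal conic with component-degrees $(a,-a)$, $a\neq 0$, or sheaves that are not invertible at the singular point); one must verify that every such $F$ is either strictly unstable or $S$-equivalent to the structure sheaf $\Oo_C$, so that it produces no extra closed points of $\mathbf{M}_{2t+1}(Q_3)$.
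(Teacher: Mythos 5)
Your overall strategy---identify $\mathbf{Hilb}_{2t+1}(Q_3)$ with $\mathrm{Gr}(3,5)$ via plane sections, classify the semistable sheaves with Hilbert polynomial $2t+1$ to see they are exactly the structure sheaves of conics, and then upgrade the resulting bijection to an isomorphism of schemes---is essentially the paper's (Lemma \ref{a1.1} and Proposition \ref{a1}). But your final step, invoking Zariski's main theorem for $\Theta\colon \mathbf{Hilb}_{2t+1}(Q_3)\to\mathbf{M}_{2t+1}(Q_3)$, is applied in the wrong direction: a proper bijective morphism is an isomorphism when the \emph{target} is normal, not the source (the normalization of a cuspidal curve is the standard counterexample). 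A priori $\mathbf{M}_{2t+1}(Q_3)$ could be non-reduced or non-normal, and smoothness of $\mathrm{Gr}(3,5)$ does nothing to rule that out. The paper closes exactly this gap by computing the tangent space of the moduli space: applying $\Ext^{\bullet}_Q(-,\Oo_C)$ to $0\to\Ii_C\to\Oo_Q\to\Oo_C\to 0$ gives $\Ext^1_Q(\Oo_C,\Oo_C)\cong\Hom_Q(\Ii_C,\Oo_C)\cong H^0(N_{C|Q})$, and the Koszul resolution $0\to\Oo_Q(-2)\to\Oo_Q(-1)^{\oplus 2}\to\Ii_C\to 0$ yields $N_{C|Q}\cong\Oo_C(1)^{\oplus 2}$, hence $\dim\Ext^1_Q(\Oo_C,\Oo_C)=6=\dim\mathbf{M}(2,1)$ at every point, so the moduli space is smooth and the bijection is an isomorphism. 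Your proof needs this computation, or an equivalent smoothness statement for the target.

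Two smaller points. First, your assertion that every $C\in\mathbf{Hilb}_{2t+1}(Q_3)$ is a planar conic silently excludes subschemes with embedded or isolated points; the paper rules these out using the bound $\chi(\Oo_A)\ge 1$ for locally Cohen--Macaulay curves of degree $2$ (Remark \ref{a1.0}), since the maximal pure subcurve of such a $C$ would have Euler characteristic $\le 0$. Second, for the identification $\mathbf{Hilb}_{2t+1}(Q_3)\cong\mathrm{Gr}(3,5)$ as schemes, mutual inverseness of $\Psi$ and $\Phi$ on closed points does not by itself preclude nilpotents on the Hilbert scheme; here too the paper uses $h^0(N_{C|Q})=6$ and $h^1(N_{C|Q})=0$ to conclude that $\mathbf{Hilb}(2,1)$ is smooth of dimension $6$ and hence reduced. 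Both points are easy to repair, but they belong in the proof.
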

We also follow the notion of $\alpha$-stable pairs as in \cite{He} to consider its moduli spaces $\mathbf{M}^{\alpha}_{\chi(t)}(Q_3)$, and observe that they are all isomorphic to $\mathrm{Gr}(3,5)$ if $\chi(t)=2t+1$. In other words, there is no wall-crossing. Then we turn our interest to the case of $\chi(t)=2t+2$, when there is again no wall-crossing. Our main result is as follows:

\begin{theorem}
For the Hilbert polynomial $\chi(t)=2t+2$, we have the following description on the three moduli spaces:
\begin{enumerate}
\item $\mathbf{Hilb}_{\chi(t)}(Q_3)$ consists of two rational irreducible components, $\Hh_1$ and $\Hh_2$, of dimension $9$ and $6$ respectively, and it is smooth outside $\Hh_1 \cap \Hh_2$.
\item $\mathbf{M}_{\chi(t)}(Q_3)$ consists of two irreducible components, $\mathfrak{M}_1$ and $\mathfrak{M}_2$, of dimension $6$ both, and $\mathfrak{M}_1$ is rational and smooth outside $\mathfrak{M}_1 \cap \mathfrak{M}_2$. 
\item $\mathbf{M}^{\alpha}_{\chi(t)}(Q_3)$ consists of two rational irreducible components, $\mathfrak{N}_1$ and $\mathfrak{N}_2$, of dimension $7$ and $6$ respectively, and it is smooth outside $\mathfrak{N}_1\cap \mathfrak{N}_2$. 
\end{enumerate}
\end{theorem}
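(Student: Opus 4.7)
The plan is to identify the irreducible components of each of the three moduli spaces by the structure of the underlying curve or sheaf, to construct explicit birational parametrizations that yield dimensions and rationality, and to verify smoothness by tangent-space computations. Throughout I would use Theorem~1.1 together with the description of $\Dd_a$ obtained in the earlier sections.

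\textbf{Hilbert scheme.} First I would classify the closed subschemes $C \subset Q_3$ with Hilbert polynomial $2t+2$. Writing $C$ as the union of its maximal locally Cohen--Macaulay subscheme $C'$ and the embedded or isolated points, a Hilbert-polynomial count forces $\deg C' = 2$. The two main open strata are then: (A) $C' = L_1 \sqcup L_2$ for a pair of disjoint lines, or (B) $C' = C_0$ a smooth conic together with an isolated point $p \in Q_3 \setminus C_0$. Let $\Hh_2$ and $\Hh_1$ denote the respective closures. Letting $F(Q_3) \cong \PP^3$ denote the Fano variety of lines on $Q_3$, the natural rational map $\Hh_2 \dashrightarrow \mathrm{Sym}^2 F(Q_3)$ is birational, yielding $\dim \Hh_2 = 6$ and rationality; its exceptional locus parametrizes the ribbons of $\Dd_2$. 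By Theorem~1.1 the smooth conics on $Q_3$ are parametrized by $\mathrm{Gr}(3,5)$, so $\Hh_1$ is birational to $\mathrm{Gr}(3,5) \times Q_3$, giving $\dim \Hh_1 = 9$ and rationality. The remaining strata (a conic with an embedded point; two incident lines plus a point; a planar double line plus a point; etc.) must be shown to lie in $\Hh_1 \cup \Hh_2$ by exhibiting explicit one-parameter smoothings.

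\textbf{Simpson moduli and pairs.} For $\mathbf{M}_{\chi(t)}(Q_3)$, a pure one-dimensional sheaf $\Ff$ of Hilbert polynomial $2t+2$ splits, after a Jordan--H\"{o}lder analysis, into two cases: either $\Ff$ is supported on an irreducible conic $C_0$, forcing it to be the unique degree-$1$ line bundle on $C_0 \cong \PP^1$ (giving $\mathfrak{M}_1$ birational to $\mathrm{Gr}(3,5)$, rational, of dimension $6$); or $\Ff$ is S-equivalent to $\Oo_{L_1} \oplus \Oo_{L_2}$ for a pair of lines in $Q_3$ (giving $\mathfrak{M}_2$ birational to $\mathrm{Sym}^2 \PP^3$, of dimension $6$). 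For the $\alpha$-stable pair moduli, the forgetful morphism to $\mathbf{M}_{\chi(t)}(Q_3)$ has as fibres projectivised spaces of sections modulo $\Aut(\Ff)$: over $\mathfrak{M}_1$ one obtains a $\PP^1$-bundle, since $h^0(\Ff) = 2$ and $\Aut(\Ff) = \CC^*$ acts trivially on $\PP(H^0(\Ff))$, so $\dim \mathfrak{N}_1 = 7$; over $\mathfrak{M}_2$, $\alpha$-stability forces the section to have nonzero component on each summand, and the $(\CC^*)^2 \subset \Aut(\Ff)$ acts transitively on such sections, leaving a unique pair up to isomorphism and giving $\dim \mathfrak{N}_2 = 6$.

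\textbf{Smoothness and main obstacle.} At a general point of $\Hh_1$ the tangent space is $H^0(C_0, N_{C_0/Q_3}) \oplus T_p Q_3$ of dimension $6+3=9$; at a general point of $\Hh_2$ it is $H^0(N_{L_1/Q_3}) \oplus H^0(N_{L_2/Q_3})$ of dimension $3+3=6$; analogous $\Ext^1$-computations handle smoothness of $\mathfrak{M}_1$, $\mathfrak{N}_1$, and $\mathfrak{N}_2$ outside the respective pairwise intersections. The hard part will be the exhaustive verification that the proposed closures cover every point: in particular, checking that all ribbons in $\Dd_2$ lie in $\Hh_2$ (rather than forming a separate component) and that reducible or non-reduced conics with an embedded point lie in $\Hh_1$ requires the detailed structure of $\Dd_2$ obtained earlier in the paper.
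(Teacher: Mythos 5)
Your identification of the components coincides with the paper's: $\Hh_1$ is the closure of (conic)$\,\sqcup\,$(point), birational to $\mathbf{Hilb}(2,1)\times Q\cong\mathrm{Gr}(3,5)\times Q$ of dimension $9$; $\Hh_2$ is the closure of $\mathbf{Hilb}(2,2)_+$, birational to $\mathrm{Sym}^2\PP^3$; and the descriptions of $\mathfrak{M}_i$ and $\mathfrak{N}_i$ match Proposition \ref{class} and its corollaries. The dimension and rationality claims at generic points are fine. But the proposal has genuine gaps precisely at the points you defer as ``the hard part,'' and these are where the paper invests almost all of its effort. First, the assertion ``smooth outside $\Hh_1\cap\Hh_2$'' is empty until you determine $\Hh_1\cap\Hh_2$, and your tangent-space computation only covers the \emph{generic} strata ($C_0\sqcup\{p\}$ with $p\notin C_0$, and $L_1\sqcup L_2$ disjoint). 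The theorem requires smoothness at every point outside the intersection, which for $\Hh_1$ includes all curves with a genuinely embedded point $p\in D$. The paper handles this via Lemma \ref{deg} and Corollary \ref{rer} (showing $\Aa(D,p)\cap\Hh_2=\emptyset$ when $p\in D_{\mathrm{sm}}$ and that exactly one point of $\Aa(D,p)$ lies in $\Hh_2$ when $p\in D_{\mathrm{sing}}$, pinned down by which ideal contains the equation of the singular quadric surface $\langle C\rangle\cap Q$), and then by a nontrivial $\mathcal{T}or$/$\Ext$ computation on $Q_2=\langle C\rangle\cap Q$ showing $h^0(N_{C|Q})=9$ for such $C$. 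Neither step follows from ``analogous computations'' at generic points; the obstruction argument of \cite[Lemma 2]{PS} and the semicontinuity comparison with $D\cup\{q\}$ are essential.

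Second, on the sheaf side your Jordan--H\"older reduction omits the nonsplit extensions of $\Oo_L$ by $\Oo_L$ (including $\Oo_C$ for $C\in\Dd_2$ and the nonlocally free extension of $\CC_p$ by $\Oo_D$ with $D\in\Dd_1$); these are S-equivalent to $\Oo_L^{\oplus 2}$ in $\mathbf{M}(2,2)$ but they carry the $\alpha$-stable pairs lying over $[\Oo_L^{\oplus 2}]$, so they cannot be discarded when analyzing $\mathbf{M}^\alpha(2,2)$ — the paper's cases (b) and (d) of the final proposition, resting on Lemma \ref{123} ($\dim\Ext^1=6,8,12$ in the various cases), are exactly what certifies smoothness of $\mathfrak{N}_2$ at the $\Dd_2$-locus and places the $\GG_2$-strata inside $\mathfrak{N}_1\cap\mathfrak{N}_2$. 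Your claim that the $(\CC^*)^2$-action leaves ``a unique pair up to isomorphism'' over $\mathfrak{M}_2$ is also only valid for split $\Ff$. Third, the statement concerns $\mathbf{M}^\alpha$ for arbitrary $\alpha>0$, so you must rule out walls (the paper's short lemma showing a destabilizing subpair $\Oo_{L'}(c-1)$ would force $c\ge 1$ and $c\le 0$ simultaneously); without this the single description of $\mathfrak{N}_1,\mathfrak{N}_2$ is not justified for all chambers. Finally, the coverage claims you postpone — that $\Dd_2$ lies in $\Hh_2$ (Lemma \ref{a2}(iii), via the quadric-cone exclusion in its Claim) and that a general conic-with-embedded-point is a limit of $(D,q)$ with $q\notin D$ — are not routine and must be supplied by explicit degenerations as in the paper.
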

Note that we have no geometric description on $\mathfrak{M}_2$ because it consists only of strictly semistable sheaves. Moreover we give full description of elements in each irreducible components and intersections:
\begin{itemize}
\item $\Hh_1$ consists of non-locally Cohen-Macaulay curves and $\Hh_2$ is the closure of locally Cohen-Macaulay curves. Their intersection consists of singular conics $D$ with an extra point $p\in D_{\mathrm{sing}}$ such that the hyperplane section containing the curve is singular at $p$. 
\item $\mathfrak{M}_{1,\mathrm{red}}$ is parametrized by the space of conics in $Q_3$ and $(\mathfrak{M}_1 \cap \mathfrak{M}_2)_{\mathrm{red}}$ is parametrized by the space of singular conics in $Q_3$. $\mathfrak{M}_2$ has a one-to-one correspondence to $\mathrm{Sym}^2(\PP^3)$, the set of pairs of two lines in $Q_3$. 
\item $\mathfrak{N_1}$ is birational to the incidence variety of the space of conics in $Q_3$ and $\mathfrak{N}_2$ is birational to $\mathrm{Sym}^2(\PP^3)$. 
\end{itemize}

Set-theoretic description of the component induced from locally Cohen-Macaulay curves is relatively easy and the main ingredients in the study of the other component are the family of double lines $\Dd_1$ and $\Dd_2$. We classify non-locally Cohen-Macaulay curves with respect to the hyperplane section containing them and study their corresponding pure sheaves with the deformation data. 

Let us summarize here the structure of this paper. In section $2$, we introduce the definitions and main properties that will be used throughout the paper, mainly those related to stability and $\alpha$-stability conditions. In section $3$, we pay attention to the Hilbert schemes of double lines in projective spaces and quadric hypersurfaces to conclude their irreducibility in some cases. In section $4$, as a warm-up case, we describe the moduli spaces with the Hilbert polynomial $\chi(t)=2t+1$. Finally in section $5$, we deal with the case of $\chi(t)=2t+2$ and describe the irreducible components of each moduli spaces and their intersections.


\section{Definitions and preliminaries}

Let $Q_n$ be a smooth quadric hypersurface of the complex projective space $\PP^{n+1}$. Then we have 
$$\mathrm{Pic}(Q_n)=H^2(Q_n, \ZZ)=\ZZ \langle h \rangle ~~~~~~\text{ for }n\ge 3$$
where $h$ is the class of a hyperplane section. If $n=3$, the cohomology ring $H^*(Q_3, \ZZ)$ is generated by $h$, a line $l\in H^4(Q_3, \ZZ)$ and a point $p\in H^6(Q_3, \ZZ)$ with the relations: $h^2=2l, h \cdot l=p$, $h^3=2p$. If there is no confusion, we will denote $Q_3$ simply by $Q$.

\begin{definition}
Let $\Ff$ be a pure sheaf of dimension $1$ on $Q$ with the Hilbert polynomial $\chi_{\Ff}(t)=\mu t+\chi$ with respect to $\Oo_{Q}(1)$. The {\it p-slope} of $\Ff$ is defined to be $p(\Ff)=\chi/\mu$. $\Ff$ is called {\it semistable (stable)} if 
\begin{enumerate}
\item $\Ff$ does not have any $0$-dimensional torsion, and 
\item for any proper subsheaf $\Ff '$, we have
$$p(\Ff ')=\frac{\chi'}{\mu '} \le (<) \frac{\chi}{\mu}=p(\Ff)$$
where $\chi_{\Ff '}(t)=\mu '+\chi'$. 
\end{enumerate}
\end{definition}

For every semistable $1$-dimensional sheaf $\Ff$ with $\chi_{\Ff}(t)=\mu t+\chi$, let us define $C_{\Ff }:=\Supp (\Ff)$ to be its scheme-theoretic support and then it corresponds to $\mu l \in H^4(X)$. We often use slope stability (resp. slope semistability) instead of Gieseker stability (resp. semistability) with respect to $L:=\Oo_{Q}(1)$, just to simplify the notation; they should be the same, because the support is $1$-dimensional and so the inequalities for Gieseker and slopes $\chi /\mu$ are the same.

\begin{definition}
Let $\mathbf{M}(\mu,\chi)$ be the moduli space of semistable sheaves on $Q$ with linear Hilbert polynomial $\chi(t)=\mu t+\chi$. 
\end{definition}

Note that $\chi_{\Ff(a)}(t)=\chi_{\Ff}+\mu \cdot a$ and so we may assume $0<\chi \le \mu$. Our main interest in this article is on the case $\mu=2$; $\mathbf{M}(2,1)$ in Section $4$ and $\mathbf{M}(2,2)$ in Section $5$.

For a smooth projective variety $X\subset \PP^r$, let $\mathbf{Hilb}_X(\mu , \chi)$ be the Hilbert scheme of curves in $X$ with the Hilbert polynomial $\mu t+ \chi$. 

\begin{definition}
A {\it locally Cohen-Macaulay (for short, locally CM)} curves in $X$ is a $1$-dimensional subscheme $C\subset X$ whose irreducible components are all $1$-dimensional and that has no embedded points. We denote by $\mathbf{Hilb}_X(\mu, \chi)_+$ the subset of $\mathbf{Hilb}_X(\mu, \chi)$ parametrizing the locally CM curves with no isolated point. 
\end{definition}

If there is no confusion, we will simply denote $\mathbf{Hilb}_Q(\mu, \chi)$ by $\mathbf{Hilb}(\mu, \chi)$. 

\begin{remark}
For $\Ff \in \mathbf{M}(1,1)$, we have $\Ff \cong \Oo_L$ with a line $L\subset Q$. Note that we have ${TQ}_{|L} \cong \Oo_L(2)\oplus \Oo_L(1)\oplus \Oo_L$ and so $N_{L|Q}\cong \Oo_L(1)\oplus \Oo_L$. It implies $h^0(N_{L|Q})=3$ and $h^1(N_{C|Q})=0$. Thus $\mathbf{Hilb}(1,1)$ is smooth and of dimension $3$. It is well known that the family of lines in $Q$ is isomorphic to $\PP^3$. Hence we have $\mathbf{M}(1,1) \cong \PP^3$. 
\end{remark}

For a positive rational number $\alpha \in \QQ_{>0}$, a pair $(s,\Ff)$ of a purely $1$-dimensional sheaf $\Ff$ with $\chi_{\Ff}(t)=\mu t+\chi$ and a non-zero section $s:\Oo_{Q} \to \Ff$ is called $\alpha$-{\it semistable} if $\Ff$ is pure and for any non-zero proper subsheaf $\Ff' \subset \Ff$ with $\chi_{\Ff '}(t)=\mu't+\chi'$, we have
$$\frac{\chi '+\delta \cdot \alpha}{\mu '} \leq \frac{\chi+\alpha}{\mu}=:\mu_{\alpha}(s,\Ff),$$
where we take $\delta=1$ if the section $s$ factors through $\Ff'$ and $\delta=0$ if not. As usual, if the inequality is strict, we call it $\alpha$-{\it stable}. By \cite[Theorem 4.2]{He} the wall happens at $\alpha$ with which the strictly $\alpha$-semistability occurs. As a routine, we will write $(1,\Ff)$ for the pair of a sheaf with $\Ff$ with a non-zero section and $(0,\Ff)$ for the pair of sheaf with zero section . 

Let us denote by $\mathbf{M}^{\alpha}(\mu , \chi)$ the moduli space of $\alpha$-semistable pairs. Note that there are only finitely many critical values $\{\alpha_1, \ldots, \alpha_s \}$ for $\alpha$-stability with $\alpha_1<\cdots<\alpha_s$ in a sense that any $\alpha\in (\alpha_i, \alpha_{i+1})$ gives the same moduli spaces of $\alpha$-stable pairs. Notice that if $\alpha<\alpha_1$, then $\alpha$-stability is equivalent to the Gieseker stability and so there exists a forgetful map 
$$\mathbf{M}^{0+}(\mu, \chi):=\mathbf{M}^{\alpha}(\mu, \chi) \to \mathbf{M}(\mu, \chi).$$
If $\alpha>\alpha_s$, then the cokernel of the pair $\Oo_{Q} \to \Ff$ is supported at a $0$-dimensional subscheme and so we get the moduli of PT stable pairs, which maps naturally to $\mathbf{Chow}(\mu, \chi):=\bigsqcup_k \mathbf{Hilb}(\mu, \chi -k)_+ \times \mathrm{Sym}^k Q$:
$$\mathbf{M}^{\infty}(\mu, \chi):=\mathbf{M}^{\alpha} (\mu, \chi) \to \mathbf{Chow}(\mu, \chi).$$


\section{Double lines in projective spaces and hyperquadrics}

\begin{definition}
For each $a\in \ZZ$, define
$$\Dd_a:=\{ A \in \mathbf{Hilb}(2,a)_+~|~ A_{\mathrm{red}} \text{ is a line }\}.$$
For the moment we take $\Dd _a$ as a set. In each case it would be clear which scheme-structure is used on it. Since $Q$ is a smooth threefold, \cite[Remark 1.3]{bf} says that each $C\in \Dd _a$ is obtained by the Ferrand construction and in particular it is a ribbon in the sense of \cite{be} with a line as its support. For each line $L\subset Q$, we let 
$$\Dd_a(L):=\{ A \in \Dd_a ~|~ A_{\mathrm{red}}=L\}.$$
\end{definition}

For each positive integer $a\in \NN$, let us denote by $T[a]$ the unique ribbon $T$ on $\PP^1$ with $\chi (\Oo _{T[a]}) =a$ whose reduced scheme is a line as in \cite[Theorem 1.2]{be}. Then we have an exact sequence
\begin{equation}\label{eqb1}
0\to \Oo _{\PP^1}(a-2) \to \Oo _{T[a]}\to \Oo _{\PP^1}\to 0.
\end{equation}
Note that $T=T[a]$ is a split ribbon, i.e. the sequence (\ref{eqb1}) splits as a sequence of $\Oo _{\PP^1}$-sheaves and each line bundle $\Ll$ on $T$ is uniquely determined by the integer $\deg (\Ll _{|T_{\mathrm{red}}})$ by \cite[Proposition 4.1]{be}. We denote this line bundle by $\Oo _T(t)$, where $t= \deg (\Ll _{|T_{\mathrm{red}}})$. Using the sequence (\ref{eqb1}), we get $h^1(\Oo _T(t)) =0$ and $h^0(\Oo _T(t)) = 2t+a$ for $t\ge 0$. To collect several aspects on double lines, we will first consider double line structures as morphisms in a more general setting. 

\begin{definition}
For $a\ge 1$ and $n\ge 3$, we define
\begin{equation}
\Dd_{a,n}:=\{ \text{locally CM  curves $C$ in $Q_n$ with } \chi_{\Oo_C}(t)=2t+a\}.
\end{equation}
Similarly we define 
\begin{equation}
\Cc_{a,r}:=\{ \text{locally CM curves $C$ in $\PP^r$ with } \chi_{\Oo_C}(t)=2t+a\}.
\end{equation}
\end{definition}

\begin{remark}
Let $\mathfrak{D}_{a,n}$ be the family of all possible embeddings $T[a]\to Q_n$ such that $\Oo_{T[a]}(1)$ is its hyperplane line bundle. Then the elements in $\Dd_{a,n}$ can be obtained as images of the embedding in $\mathfrak{D}_{a,n}$. Thus we get $\dim \Dd_{a,n}=\dim \mathfrak{D}_{a,n}-\dim \mathrm{Aut}(T[a])$. Similarly we may define $\mathfrak{C}_{a,n}$ for $\Cc_{a,r}$ as a family of embeddings. 
\end{remark}

\begin{lemma}\label{u1}
Let $T=T[a]$ with $a>0$. For $t>0$, we have the following:
\begin{enumerate}
\item the line bundle $\Oo _T(t)$ is very ample for $t>0$ and 
\item a general $4$-dimensional linear subspace $V\subseteq H^0(\Oo _T(t))$ induces an embedding of $T$ into $\PP^3$.
\end{enumerate}
\end{lemma}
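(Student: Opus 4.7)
My plan is to derive both assertions from the twist of the defining sequence (\ref{eqb1}) by $\Oo_T(t)$:
\begin{equation*}
0 \to \Oo_{\PP^1}(a-2+t) \to \Oo_T(t) \to \Oo_{\PP^1}(t) \to 0.
\end{equation*}

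For (1) I would verify the standard very ampleness criterion: separation of closed points and of tangent vectors. Since $H^1(\Oo_{\PP^1}(a-2+t))=0$ (valid under $a+t\ge 1$), the sequence yields a surjection $H^0(\Oo_T(t))\twoheadrightarrow H^0(\Oo_{\PP^1}(t))$. The very ampleness of $\Oo_{\PP^1}(t)$ on $\PP^1$ for $t\ge 1$ then immediately handles point separation on $T=T_{\mathrm{red}}$ together with the tangent direction along $T_{\mathrm{red}}$. The only remaining task is to produce the ``nilpotent'' tangent direction at every closed point $p$: the evaluation map lands in the $3$-dimensional target $\Oo_T(t)_p/\mm_p^2\Oo_T(t)_p$ (because $\dim_k \mm_p/\mm_p^2 = 2$ for a ribbon), and the extra direction not seen by the reduction to $\Oo_{\PP^1}(t)_{\bar p}/\mm_{\bar p}^2\Oo_{\PP^1}(t)_{\bar p}$ is filled by the subspace $H^0(\Oo_{\PP^1}(a-2+t))\subset H^0(\Oo_T(t))$ of sections vanishing on $T_{\mathrm{red}}$. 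Since $a-2+t\ge 0$, the line bundle $\Oo_{\PP^1}(a-2+t)$ is globally generated, hence admits a section nonzero at $\bar p$, which supplies the required element.

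For (2), by (1) the complete linear system embeds $T$ via $\phi:T\hookrightarrow \PP^{N-1}$ with $N:=h^0(\Oo_T(t))=2t+a$. A $4$-dimensional subspace $V\subseteq H^0(\Oo_T(t))$ corresponds to a linear projection $\pi_\Lambda:\PP^{N-1}\dashrightarrow \PP^3$ from an $(N-5)$-plane $\Lambda$, and a general $V$ gives a general $\Lambda$. The composition $\pi_\Lambda\circ\phi$ is a closed embedding of $T$ precisely when $\Lambda$ is disjoint from $\phi(T)$, from its secant variety $\op{Sec}(\phi(T))$, and from its tangent variety $\op{Tan}(\phi(T))$. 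Direct dimension counts give $\dim\op{Sec}(\phi(T))\le 3$ and $\dim\op{Tan}(\phi(T))\le 3$, the latter because the embedded Zariski tangent space of the ribbon at each point is a $\PP^2$ rather than a line. A general $(N-5)$-plane then meets each of these three subvarieties trivially whenever $N\ge 5$, and the borderline case $N=4$ is covered directly by (1) with $V=H^0(\Oo_T(t))$.

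The main obstacle I anticipate is the tangent-separation step in (1): the nilpotent direction is only accessible through the subsheaf $\Oo_{\PP^1}(a-2+t)$, and the positivity hypotheses $a\ge 1$ and $t\ge 1$ enter essentially through its global generation. In (2), the only subtlety is that the tangent variety of the non-reduced curve $T$ is a priori one dimension larger than for a smooth curve, but the bound $\le 3$ remains comfortably within the margin required by the projection argument.
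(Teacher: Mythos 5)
Your proof is correct and follows essentially the same route as the paper: both establish very ampleness by checking separation of points via the restriction to $T_{\mathrm{red}}$ and separation of the nilpotent tangent direction via the vanishing $h^1(\Oo_{\PP^1}(a-2+t))=0$ (your global-generation phrasing of the subsheaf $\Oo_{\PP^1}(a-2+t)$ is the same computation the paper packages as $h^0(\Ii_v(t))<h^0(\Ii_p(t))$), and both then obtain (2) by a general linear projection using that the embedded Zariski tangent spaces are $2$-dimensional, so the tangent and secant varieties have dimension at most $3$. No gap; your write-up is if anything more explicit than the paper's.
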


\begin{proof}
Set $D:= T_{\mathrm{red}}$. Each line bundle $\Oo _T(t)$ with $t>0$, is spanned by the sequence (\ref{eqb1}). The line bundle $\Oo _T(1)$ is ample, because $\Oo _T(1)$ generates $\mathrm{Pic}(T)$ and $h^0(\Oo _T(-c)) =0$ for $c\gg 0$. Thus $\Oo _T(t)$ is ample for each $t>0$. The morphism $f: T\to \PP ^m$ with $m:= h^0(\Oo _T(t))-1$, induces an embedding of $D$ and so it is sufficient to prove that $f$ is a local embedding. 

Fix a point $p\in D$ and a non-zero tangent vector $v$ of $T$ at $p$. If $v$ is tangent to $D$, then $f_{|v}$ is an embedding because $f_{|D}$ is an embedding. If $v$ is not tangent to $D$, then we may use $h^1(\Oo _{\PP^1}(a-2+t)) =0$ to see that $h^0(\Ii _v(t)) < h^0(\Ii _p(t))$. Since
this is true for all $p$ and $v$, so the injective morphism $f$ is a local embedding by \cite[Proposition II.7.3]{Hartshorne}. Hence $f$ is an embedding. Since $\dim (T) = 1$ and each Zariski tangent space of $T$ has dimension $2$, a general linear projection of $f(T)$ into $\PP^3$ is an embedding.
\end{proof}

\begin{remark}\label{u1.1}
By Lemma \ref{u1}, we get that $\Cc _{a,r} \ne \emptyset$ for each $a>0$ and $r \ge 3$. The elements of $\Cc_{a,r}$ are obtained in the following way: fix an integer $s$ such that $4\le s \le \min \{r+1,a+3\}$ and let $V\subseteq H^0(\Oo _{T[a]}(1))$ be an $s$-dimensional linear subspace spanning $\Oo _{T[a]}(1)$ and inducing an embedding of $T[a]$. Note that the set of all such subspaces is a non-empty open subset of the Grassmannian $\mathrm{Gr}(s,H^0(\Oo _{T[a]}(1)))$. Thus $\Cc _{a,r}$ is irreducible and its general element is obtained by taking $s = \min \{r+1,a+3\}$. If $a\ge 3$, we get 
$\mathbf{Hilb}_{Q_n}(2,a)_+ $ and $\mathbf{Hilb}_{\PP^{n+1}}(2,a)_+$ contain no reduced curve,
because every reduced curve $C \subset \PP^r$ of degree $2$ has $\chi (\Oo _C)\le 2$.
\end{remark}

By Remark \ref{u1.1}, $\Dd _{a,n}$ with $n\ge 7$ contains the embeddings of $T[a]$ into the maximal linear subspaces of $Q_n$. Later in Proposition \ref{u5}, we get a description of $\Dd_{a,4}$ with $a=1$ by considering $Q_4$ as the Grassmannian $\mathrm{Gr}(2,4)$. 

For fixed $a\in \ZZ$, we get the following lemma for $T=T[a]$ using the sequence (\ref{eqb1}). 

\begin{lemma}\label{u1.2}
The line bundle $\Oo_T(t)$ is spanned if and only if $t\ge 0$. We also have
$$h^0(\Oo_T(t))= \left\{
                                           \begin{array}{lll}
                                             0, & \hbox{if $t\le -a+1$;}\\
                                             t+a-1, & \hbox{if $-a+2\le t<0$;} \\                                          
                                             t+a, & \hbox{if $t\ge 0$,}
                                            \end{array}
                                         \right.$$
and $h^1(\Oo _T(t)) =0$ for all $t\ge -a+1$.  
\end{lemma}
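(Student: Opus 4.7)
The plan is to twist the defining sequence (\ref{eqb1}) by $\Oo_T(t)$ and then exploit the split-ribbon hypothesis to split the resulting sequence as $\Oo_{\PP^1}$-modules. Writing $D := T_{\mathrm{red}} \cong \PP^1$, the ideal $\Ii_{D,T}$ has square zero and is therefore naturally an $\Oo_D$-module, so tensoring (\ref{eqb1}) with $\Oo_T(t)$ produces
\[ 0 \to \Oo_{\PP^1}(a-2+t) \to \Oo_T(t) \to \Oo_{\PP^1}(t) \to 0, \]
since $\Ii_{D,T} \otimes_{\Oo_T} \Oo_T(t) = \Oo_{\PP^1}(a-2) \otimes_{\Oo_D} \Oo_D(t)$. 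Because $T$ is a split ribbon, the surjection $\Oo_T \onto \Oo_D$ admits a ring-theoretic section, and tensoring this section with $\Oo_T(t)$ splits the displayed sequence as a sequence of $\Oo_{\PP^1}$-modules.

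From the $\Oo_{\PP^1}$-module splitting I would then read off $h^i(\Oo_T(t)) = h^i(\Oo_{\PP^1}(t)) + h^i(\Oo_{\PP^1}(a-2+t))$ for $i = 0, 1$, so the piecewise description of $h^0$ and the claimed vanishing of $h^1$ reduce to a short case check on the signs of $t$ and $a-2+t$, using the standard computation of cohomology of line bundles on $\PP^1$.

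For the spanning statement, necessity is immediate by restriction to $D$: if $\Oo_T(t)$ is globally generated then so is $\Oo_D(t) \cong \Oo_{\PP^1}(t)$, which forces $t \geq 0$. For sufficiency, observe that for $t \geq 0$ one has $a-2+t \geq -1$ (using $a \geq 1$), hence $H^1(\Oo_{\PP^1}(a-2+t)) = 0$, and the long exact sequence gives surjectivity of the restriction $H^0(\Oo_T(t)) \onto H^0(\Oo_{\PP^1}(t))$. Given $p \in D$, I would take a section of $\Oo_{\PP^1}(t)$ not vanishing at $p$, lift it to $\wt{\sigma} \in H^0(\Oo_T(t))$, and apply Nakayama's lemma to the rank-one free $\Oo_{T,p}$-module $\Oo_T(t)_p$ to conclude that $\wt{\sigma}$ generates the stalk at $p$.

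The delicate point I anticipate is the splitting of the twisted sequence as $\Oo_{\PP^1}$-modules, which rests on the split-ribbon hypothesis together with \cite[Proposition 4.1]{be}; without it the long exact sequence would still compute $\chi(\Oo_T(t))$ but not resolve $h^0$ and $h^1$ individually in the middle range. Once that splitting is in hand, the remainder of the argument is purely numerical and the three cases can be dispatched in a line each.
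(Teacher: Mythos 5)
Your reduction is exactly the paper's intended argument: the paper offers no proof of this lemma beyond the phrase ``using the sequence (\ref{eqb1})'', and twisting that sequence by $\Oo_T(t)$, identifying the kernel as $\Oo_{\PP^1}(a-2+t)$ because the ideal of $D$ in $T$ is square-zero, and splitting the result via the split-ribbon structure (equivalently, $\Oo_T(t)\cong \pi^*\Oo_{\PP^1}(t)$ for the retraction $\pi\colon T\to \PP^1$ by \cite[Proposition 4.1]{be}, followed by the projection formula for the finite map $\pi$) is the right way to make that one line precise. Your treatment of global generation is also fine.

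The gap is in the step you declare routine. The decomposition gives $h^i(\Oo_T(t))=h^i(\Oo_{\PP^1}(t))+h^i(\Oo_{\PP^1}(a-2+t))$, and carrying out the case check does \emph{not} return the displayed formulas. For $t\ge 0$ one gets $h^0(\Oo_T(t))=(t+1)+(a+t-1)=2t+a$, not $t+a$; note that $2t+a$ is forced by $\chi(\Oo_T(t))=2t+a$ together with $h^1=0$, and is what the paper itself asserts two lines after (\ref{eqb1}). Likewise $h^1(\Oo_T(t))=\max(-t-1,0)+\max(1-a-t,0)$, which for $a\ge 1$ vanishes exactly when $t\ge -1$, not for all $t\ge -a+1$: already for $a=3$ and $t=-2$ one has $h^1(\Oo_T(-2))=h^1(\Oo_{\PP^1}(-2))+h^1(\Oo_{\PP^1}(-1))=1$. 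So the statement as printed is internally inconsistent (the claims $h^0=t+a$ and $h^1=0$ for $t>0$ would violate $h^0-h^1=2t+a$), and your proposal, which promises to dispatch each case in a line and arrive at the stated formulas, cannot be completed as written. You should either record the corrected values ($h^0=2t+a$ for $t\ge 0$, and $h^1(\Oo_T(t))=0$ if and only if $t\ge -1$) and note that the surrounding applications only use the lemma in ranges where the corrected and printed versions agree, or at least flag the misprint; as it stands the proposal asserts a conclusion that its own (correct) method refutes.
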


\begin{lemma}\label{u4}
For $C \in \Cc _{a,r}$ with $r\ge 2$, we have the following: 
\begin{itemize}
\item[(i)] $C$ is contained in each quadric hypersurface whose singular locus contains $L:=C_{\mathrm{red}}$ and in particular, $h^0(\Ii _C(2)) \ge { r \choose 2}$.
\item[(ii)] The linear system $|\Ii _C(2)|$ has no base points outside $L$.
\end{itemize}
\end{lemma}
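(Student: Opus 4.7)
The plan is to reduce both parts of the lemma to a single ideal containment in $\Oo_{\PP^r}$, namely
$\Ii_L^2 \subset \Ii_C$.
This is what encodes the double-line nature of $C$: because $C$ is locally Cohen-Macaulay of degree two with reduced support $L$, the nilpotent ideal $\Ii_L/\Ii_C \subset \Oo_C$ has square zero (this is built into the ribbon description recalled just before the lemma, via the Ferrand construction). I would begin by stating this containment as a one-line preliminary.

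For part (i), I would observe that a quadric $Q = \{f=0\} \subset \PP^r$ whose singular locus contains $L$ satisfies $f \in \Ii_L^2$: the condition that $f$ and all its first-order partial derivatives vanish along $L$ is exactly membership in $\Ii_L^2$. Combined with the preliminary, this gives $f \in \Ii_C$ and hence $C \subset Q$. To produce the dimension bound, I would choose coordinates so that $\Ii_L$ is generated by the $r-1$ linear forms $x_2,\ldots,x_r$; the pairwise products $x_ix_j$ with $2 \le i \le j \le r$ then give $\binom{r}{2}$ linearly independent global sections of $\Ii_L^2(2) \hookrightarrow \Ii_C(2)$, so $h^0(\Ii_C(2)) \ge \binom{r}{2}$.

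For part (ii), given $p \in \PP^r \setminus L$, I would exhibit a double hyperplane $Q' = 2H$ with $H \supset L$ and $p \notin H$. Such an $H$ exists because the hyperplanes of $\PP^r$ containing $L$ form a linear system with base locus exactly $L$ (for $r \ge 3$ this is a $\PP^{r-2}$ in the dual projective space; for $r=2$ the only such hyperplane is $L$ itself, which already avoids $p$). Writing $H = \{\ell = 0\}$, the defining equation $\ell^2$ of $Q'$ lies in $\Ii_L^2 \subset \Ii_C$, so $C \subset Q'$, while $p \notin H$ gives $p \notin Q'$ immediately.

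The main (but minor) obstacle is the preliminary containment $\Ii_L^2 \subset \Ii_C$, which really does use the locally Cohen-Macaulay hypothesis: without it, embedded components of $C$ could support higher-order nilpotent structure and break the containment. Once this is in place, both statements follow formally from the standard description of quadrics with prescribed singular locus as the degree-two elements of $\Ii_L^2$, together with the fact that the pencil of hyperplanes through a line in $\PP^r$ has $L$ as its base locus.
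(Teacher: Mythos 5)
Your proposal is correct and follows essentially the same route as the paper: both arguments reduce everything to the containment $(\Ii_L)^2\subset \Ii_C$, identify quadrics singular along $L$ with elements of $|(\Ii_L)^2(2)|$ to get the count $\binom{r}{2}$, and use the double hyperplane $2H$ with $L\subset H$, $p\notin H$ for the base-point claim. The only difference is cosmetic: you spell out the justification of $(\Ii_L)^2\subset\Ii_C$ via the square-zero nilradical of the ribbon structure, which the paper simply asserts.
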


\begin{proof}
Let $B\subset \PP^r$ be a quadric hypersurface. We have $B\in |(\mathcal {I}_L)^2(2)|$ if and only if the singular locus of $B$ contains $L$ and the set of all such quadrics is a projective space of dimension $\binom{r}{2}-1$. Thus the lemma follows from the inclusion $(\mathcal {I}_L)^2\subset \Ii _C$. Fix a point $p\in \PP^r\setminus L$ and let $H\subset \PP^r$ be any hyperplane containing $L$ such that $p\notin H$. Since $2H\in |\Ii _C(2)|$, so $p$ is not a base point of $|\Ii _C(2)|$.
\end{proof}

For each integers $r\ge 3$ and $a\ge 1$, let us define
$$\Cc_{a,r}^+ := \{ C\in \Cc_{a,r}~|~ h^0(\Ii_C(1))=0 \text{ or }h^1(\Ii_C(1))=0\},$$
i.e. the linear span of $C$ in $\PP^r$ has dimension $\min \{r, h^0(\Oo _{T[a]}(1))-1\}$. Then $\Cc _{a,r}^+$ is a dense open subset of $\Cc _{a,r}$.

\begin{lemma}\label{u1.3}
For each integer $a\ge 3$, we have 
$\Cc _{a,a+1} \ne \emptyset$ 
and all elements of $\Cc _{a,a+1}^+$ are projectively equivalent.
\end{lemma}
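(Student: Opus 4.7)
The plan is to produce an explicit non-degenerate embedding of $T=T[a]$ into $\PP^{a+1}$ via the complete linear system $|\Oo_T(1)|$, and then to show every element of $\Cc^+_{a,a+1}$ is obtained in this way, from which projective equivalence follows. For non-emptiness I would invoke Lemma~\ref{u1}(1), giving the very ampleness of $\Oo_T(1)$, together with the cohomology computation in the discussion preceding Lemma~\ref{u1.2} yielding $h^0(\Oo_T(1))=a+2$. The complete linear system $|\Oo_T(1)|$ then embeds $T[a]$ non-degenerately into $\PP^{a+1}$, providing a member of $\Cc^+_{a,a+1}\subseteq \Cc_{a,a+1}$.

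Next I would identify each $C\in\Cc^+_{a,a+1}$ with $T[a]$. Since $a\ge 3$, Remark~\ref{u1.1} rules out reduced curves in $\Cc_{a,a+1}$, and the only irreducible reduced degree-$2$ curve (a smooth plane conic) has $\chi=1<a$; hence $C_{\mathrm{red}}$ must be a line and $C$ a ribbon on it, forcing $C\cong T[a]$ and $\Oo_C(1)\cong \Oo_T(1)$. In particular $h^0(\Oo_C(1))=a+2=h^0(\Oo_{\PP^{a+1}}(1))$, so the restriction sequence
\[
0 \to \Ii_C(1) \to \Oo_{\PP^{a+1}}(1) \to \Oo_C(1) \to 0
\]
yields $h^0(\Ii_C(1))=h^1(\Ii_C(1))$; the defining condition of $\Cc^+_{a,a+1}$ then requires both to vanish, so $C$ is non-degenerate in $\PP^{a+1}$ and embedded by the complete series on $T[a]$.

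Finally, given two such $C_1,C_2\in\Cc^+_{a,a+1}$ together with abstract isomorphisms $\phi_i:T[a]\to C_i$, each composite
\[
H^0(\Oo_{\PP^{a+1}}(1))\longrightarrow H^0(\Oo_{C_i}(1)) \xrightarrow{\ \phi_i^*\ } H^0(\Oo_T(1)),
\]
in which the first arrow is restriction and the second is pullback, is an isomorphism of $(a+2)$-dimensional vector spaces by the previous paragraph. Composing one with the inverse of the other produces a linear automorphism of $H^0(\Oo_{\PP^{a+1}}(1))$ whose image in $\mathrm{PGL}(a+2)$ carries $C_1$ onto $C_2$, which yields the desired projective equivalence. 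The main obstacle I anticipate is the identification $C\cong T[a]$ together with the matching of the hyperplane line bundles $\Oo_C(1)\cong \Oo_T(1)$; once that step is secured, the rest of the argument is a routine application of the completeness of the embedding and the functoriality of the linear series.
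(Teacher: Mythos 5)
Your proof is correct and follows essentially the same route as the paper: both identify every element of $\Cc_{a,a+1}^+$ with a linearly normal, non-degenerate embedding of $T[a]$ by the complete system $|\Oo_{T[a]}(1)|$, so that any two such embeddings differ by a choice of basis of $H^0(\Oo_{T[a]}(1))$, i.e.\ by a projectivity. Your count $h^0(\Oo_{T[a]}(1))=a+2$ is the correct one (the paper's proof misprints it as $a+1$), and your expansion of the published two-line argument --- excluding reduced supports via Remark \ref{u1.1} so that $C\cong T[a]$ with $\Oo_C(1)\cong\Oo_{T[a]}(1)$, and deducing $h^0(\Ii_C(1))=h^1(\Ii_C(1))=0$ from the restriction sequence together with the defining condition of $\Cc^+_{a,a+1}$ --- supplies exactly the details the paper leaves implicit.
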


\begin{proof}
Recall that $h^0(\Oo _{T[a]}(1)) =a+1$ and that $\Oo _{T[a]}(1)$ is very ample. Thus we get $\Cc _{a,a+1} \ne \emptyset$ and all elements of $\Cc _{a,a+1}^+$ are projectively equivalent; they correspond to a choice of a basis of $H^0(\Oo _{T[a]}(1))$. 
\end{proof}

For any $C\in \Cc _{a,r}$, we have $h^0(\Oo _C(2)) =a+4$. In particular we have $h^i(\Ii_C(2))>0$ for $i=0,1$, if $a+4 > \binom{r+2}{2} -\binom{r}{2} = r+1$ by Lemma \ref{u4}.

\begin{proposition}\label{u1.5}
For $C\in \Cc _{3,4}^+$, we have
\begin{enumerate}
\item $h^i(\Ii _C(t)) =0$ for all $i \ge 3$ and $t\ge -4$.   
\item $h^2(\Ii _C(t)) =0$ for all $t\ge -1$,
\item $h^1(\Ii_C(t))= \left\{
                                           \begin{array}{ll}
                                             0, & \hbox{if $t>0$;}\\
                                             2, & \hbox{if $t=0$,} 
                                           \end{array}
                                         \right.$                         
\item $h^0(\Ii _C(t)) = \binom{t+4}{4}-2t-3$ for all $t\ge 1$ and $C$ is contained in a smooth quadric hypersurface.                                             
\end{enumerate}
\end{proposition}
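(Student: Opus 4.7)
The plan is to take the standard exact sequence $0\to \Ii_C(t)\to \Oo_{\PP^4}(t)\to \Oo_C(t)\to 0$ and chase the associated long exact sequence in cohomology, using Lemma \ref{u1.2} to control $\Oo_C$. Since each $C\in\Cc_{3,4}^+$ is non-degenerate in $\PP^4$, it arises as the image of the complete embedding $T[3]\hookrightarrow\PP^4$ via $|\Oo_{T[3]}(1)|$, so Lemma \ref{u1.2} supplies $h^0(\Oo_C(t))=2t+3$ for $t\ge 0$, $h^1(\Oo_C(t))=0$ for $t\ge -2$, and trivially $h^i(\Oo_C(t))=0$ for $i\ge 2$; the cohomology of $\Oo_{\PP^4}(t)$ is standard.

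Parts (1) and (2) then fall out immediately from the long exact sequence: for (1), combine $h^i(\Oo_{\PP^4}(t))=0$ (which holds for $1\le i\le 3$, and for $i=4$ when $t\ge -4$) with $h^{i-1}(\Oo_C(t))=0$ for $i\ge 3$; for (2), note that $h^2(\Ii_C(t))$ is sandwiched between $h^1(\Oo_C(t))=0$ (valid for $t\ge -2$, hence for $t\ge -1$) and $h^2(\Oo_{\PP^4}(t))=0$. For (3), the case $t=0$ gives $h^1(\Ii_C)=h^0(\Oo_C)-h^0(\Oo_{\PP^4})=2$, and for $t=1$ the hypothesis $C\in\Cc_{3,4}^+$ combined with $h^0(\Oo_{\PP^4}(1))=h^0(\Oo_C(1))=5$ forces both $h^0(\Ii_C(1))$ and $h^1(\Ii_C(1))$ to vanish. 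The vanishings $h^1(\Ii_C(1))=h^2(\Ii_C(0))=h^3(\Ii_C(-1))=h^4(\Ii_C(-2))=0$ (all established above) show that $\Ii_C$ is $2$-regular in the sense of Castelnuovo--Mumford, so $h^1(\Ii_C(t))=0$ for every $t\ge 1$. Part (4)'s formula for $h^0(\Ii_C(t))$ is then $\chi(\Oo_{\PP^4}(t))-\chi(\Oo_C(t))$ combined with these vanishings.

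The remaining claim in (4)---that $C$ lies on a smooth quadric---is the real content. By Lemma \ref{u1.3} all elements of $\Cc_{3,4}^+$ are projectively equivalent, so it suffices to exhibit one $C$ contained in a smooth quadric. A convenient choice is $L=\{x_0=x_1=x_2=0\}$ together with the ribbon $T[3]$ corresponding to the surjection $\Oo_L(-1)^{\oplus 3}\twoheadrightarrow\Oo_L(1)$ defined by $(x_3^2, x_3 x_4, x_4^2)$; its ideal contains the two syzygy quadrics $x_1x_3-x_0x_4$ and $x_2x_3-x_1x_4$, and a direct determinant computation shows that $Q:=x_1x_3-x_0x_4+x_1^2+x_2^2$ is a smooth quadric containing this $C$. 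This smooth-quadric step is the only place that requires any ingenuity; the cohomological parts are routine manipulations of the long exact sequence once Lemma \ref{u1.2} is at hand.
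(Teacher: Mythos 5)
Your proposal is correct, but it diverges from the paper's proof in the two places that matter. For the vanishing $h^1(\Ii_C(t))=0$, $t\ge 1$, you observe that the vanishings $h^1(\Ii_C(1))=h^2(\Ii_C)=h^3(\Ii_C(-1))=h^4(\Ii_C(-2))=0$ — all of which follow from parts (1), (2) and linear normality — already make $\Ii_C$ $2$-regular, so Castelnuovo--Mumford finishes the job in one stroke. The paper instead computes $h^1(\Ii_C(2))=0$ separately by a geometric argument (projecting $T[3]\to T[2]$ from the point $o$ dual to the kernel of $H^0(\Oo_{T[3]}(1))^\vee\to H^0(\Oo_{T[2]}(1))^\vee$, identifying the quadric cones through $C$ singular at $o$ with quadrics in $\PP^3$ through the projected ribbon, and counting dimensions against the codimension-$5$ condition of singularity at $o$) and only then applies Castelnuovo--Mumford at level $3$. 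Your route is shorter and avoids that dimension count entirely; the paper's detour is not wasted, however, because the same Bertini-type argument simultaneously shows that a \emph{general} member of $|\Ii_C(2)|$ is smooth, which is how the paper obtains the smooth-quadric statement in (4). You recover that statement by a different and equally legitimate mechanism: by Lemma \ref{u1.3} all elements of $\Cc_{3,4}^+$ are projectively equivalent, so one explicit witness suffices, and your Ferrand double line on $L=\{x_0=x_1=x_2=0\}$ given by the surjection $(x_3^2,x_3x_4,x_4^2)$ does lie on the quadric $x_1x_3-x_0x_4+x_1^2+x_2^2$, whose associated symmetric matrix is visibly nondegenerate (the two syzygy quadrics are killed by the surjection and $x_1^2,x_2^2\in\Ii_L^2$, so the quadric is indeed in $\Ii_C$; one should also note, as you implicitly do, that this witness has $h^0(\Ii_C(1))=0$ and hence genuinely belongs to $\Cc_{3,4}^+$). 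The trade-off is that the paper's argument gives the slightly stronger conclusion that the \emph{general} quadric through $C$ is smooth, whereas yours exhibits a single one — but a single one is all the Proposition asserts and all that Proposition \ref{u1.4} later needs.
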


\begin{proof}
Since $\dim ({C})=1$, so we have $h^2(\Ii _C(t)) = h^1(\Oo _C(t)) =0$ for all $t\ge -1$. Similarly if $i \ge 3$ and $t\ge -4$, then we have $h^i(\Ii _C(t)) = h^i(\Oo _{\PP^4}(t)) =0$. Since $h^0(\Ii _C)=0$, we have $h^1(\Ii _C) = h^0(\Oo _C)-1 = 2$. Since $C\in \Cc _{3,4}^+$, so it is linearly normal, i.e. $h^i(\Ii_C(1)) =0$ for $i=0,1$. 

Since $h^0(\Oo _C(2)) =7$ and $h^0(\Oo _{\PP^4}(2)) =\binom{6}{2}=15$, we have $h^0(\Ii _C(2)) = 8+h^1(\Ii _C(2))$. Set $L:= C_{\mathrm{red}}$. By Lemma \ref{u4} and the Bertini theorem, a general element of $|\Ii _C(2)|$ has singular locus contained in $L$.

Fix a point $p\in \PP^1$. The choice of the point $p$, i.e. a degree $1$ effective divisor of $\PP^1$, induces a surjective morphism $f: T[3] \to T[2]$ inducing an exact sequence
\begin{equation}\label{equ1}
0 \to \Oo_{T[2]} \to \Oo_{T[3]} \to \CC _p  \to 0
\end{equation}
by \cite[Theorem 1.1]{eg}. From (\ref{equ1}) we get that the map $u: H^0(\Oo _{T[3]}(1))^\vee \to H^0(\Oo _{T[2]}(1))^\vee$ is surjective with a $1$-dimensional kernel. Since $C$ is linearly normal, $\mathrm{ker}(u)$ corresponds to a unique point $o\in \PP^4$. Since $f$ is a morphism, so we have $o\notin C$ and $f$ corresponds to the linear projection from $o$. Since $f$ is induced by the linear projection from $o$, the set $\Sigma _o$ of all quadric cones containing $C$ and with $o$ contained in their vertices has dimension $h^0(\PP^3,\Ii _{C'}(2)) -1=3$ with $C'=f(C)$. Since $C'$ is contained in a smooth quadric surface, we have $W=\{o\}$ for a general $W\in \Sigma _o$. In particular, $L$ is contained in the smooth locus of $W$ and so a general $A\in |\Ii _C(2)|$ is smooth at all points of $L$. Hence $A$ is smooth. The set of all $B\in |\Oo _{\PP^4}(2)|$ singular at $o$ has codimension $5$ in $|\Oo _{\PP^4}(2)|$. Since $\dim (\Sigma _o)= 3$ and $h^0(\Ii _C(2)) \ge 8$, we get $h^0(\Ii _C(2)) =8$ and so $h^1(\Ii _C(2)) =0$. Since $h^i(\Ii _C(3-i)) =0$ for $i=2,3,4$, so the Castelnuovo-Mumford lemma gives $h^1(\Ii _C(t)) =0$ for all $t\ge 3$.
\end{proof}

\begin{proposition}\label{u1.4}
We have $\mathbf{Hilb}(2,3)_+=\Dd _3 \ne \emptyset$.
\end{proposition}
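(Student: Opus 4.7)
The plan is to establish the two assertions of the proposition separately: first the set-theoretic equality $\mathbf{Hilb}(2,3)_+ = \Dd_3$, and then the non-emptiness of $\Dd_3$.

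For the equality, only the inclusion $\mathbf{Hilb}(2,3)_+ \subseteq \Dd_3$ requires work, since the opposite inclusion is immediate from the definition of $\Dd_3$. Given any $C \in \mathbf{Hilb}(2,3)_+$, the Hilbert polynomial $2t+3$ forces $\deg C = 2$, and Remark \ref{u1.1} rules out the possibility that $C$ is reduced, because every reduced degree-$2$ curve in projective space satisfies $\chi(\Oo_C) \le 2$. A locally CM curve of degree $2$ that fails to be reduced must have $C_{\mathrm{red}}$ of degree strictly less than $2$, so $C_{\mathrm{red}}$ is a line of $Q_3$ and $C \in \Dd_3$.

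For non-emptiness, I would produce an element by transporting a ribbon already available in $\PP^4$ into $Q_3$. By Lemma \ref{u1.3} the locus $\Cc_{3,4}^+$ is non-empty; pick any $C' \in \Cc_{3,4}^+$, which is a ribbon supported on a line of $\PP^4$ with $\chi(\Oo_{C'})=3$. Proposition \ref{u1.5}(4) ensures that $C'$ is contained in some smooth quadric threefold $Q'\subset \PP^4$. Since all smooth quadric threefolds of $\PP^4$ lie in a single $\mathrm{PGL}(5)$-orbit, a projectivity carries $Q'$ onto $Q_3$ and $C'$ to a ribbon $C\subset Q_3$ with Hilbert polynomial $2t+3$, giving the desired element of $\Dd_3$.

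The whole argument is essentially a repackaging of results already set up; no substantive new computation appears. The only delicate step is the reduction in the first part, but it is immediate once the locally CM condition is combined with the $\chi \le 2$ bound for reduced degree-$2$ curves supplied by Remark \ref{u1.1}; the non-emptiness, in turn, hinges crucially on Proposition \ref{u1.5}(4), which is what lets us realize the abstract ribbon inside a smooth quadric rather than merely inside $\PP^4$.
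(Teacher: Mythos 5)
Your proof is correct and follows essentially the same route as the paper: the equality comes from the fact that no reduced degree-$2$ curve has $\chi=3$ (so any element of $\mathbf{Hilb}(2,3)_+$ is a ribbon on a line), and non-emptiness comes from Proposition \ref{u1.5}(4) placing an element of $\Cc_{3,4}^+$ in a smooth quadric threefold. You merely spell out the implicit steps (Lemma \ref{u1.3} for non-emptiness of $\Cc_{3,4}^+$ and the $\mathrm{PGL}(5)$-transitivity on smooth quadrics) that the paper leaves tacit.
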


\begin{proof}
Since $Q$ is a smooth three-fold, every degree $2$ locally Cohen-Macaulay double structure on a line is ribbon (\cite[Remark 1.3]{bf}).
No reduced curve $D\subset Q$ has $\chi (\Oo _D) =3$. Therefore $\mathbf{Hilb}(2,3)_+=\Dd _3$. We have $\Dd _3\ne \emptyset$, because every $C\in \Cc _{3,4}^+$ is contained in a smooth quadric hypersurface by Proposition \ref{u1.5}.
\end{proof}

\begin{lemma}\label{u2}
Let $\Ee$ be a vector bundle of rank $r\ge 2$ on $T=T[a]$. If $(a_1, \ldots, a_r) \in \ZZ^{\oplus r}$ is the splitting type of $\Ee _{|T_{\mathrm{red}}}$, i.e. $\Ee_{|T_{\mathrm{red}}} \cong \oplus _{i=1}^r\Oo_{T_{\mathrm{red}}}(a_i)$, then we have $\Ee \cong \oplus _{i=1}^{r} \Oo _T(a_i)$.
\end{lemma}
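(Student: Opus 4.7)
The plan is to reduce to the rank-one case \cite[Proposition 4.1]{be} by means of the retraction of the split ribbon. Since $T$ is split, the $\Oo_L$-algebra decomposition $\Oo_T = \Oo_L \oplus I$ with $L := T_{\mathrm{red}}$ and $I \cong \Oo_L(a-2)$ yields a retraction $\pi\colon T \to L$ of the inclusion $i\colon L \hookrightarrow T$. Setting $E_0 := \Ee_{|L}$, my candidate isomorphism is the natural comparison $\pi^* E_0 \to \Ee$. If it is an isomorphism, then because $\pi^* E_0 = E_0 \otimes_{\Oo_L} \Oo_T$ and $\Oo_L(a_i) \otimes_{\Oo_L} \Oo_T = \Oo_T(a_i)$ by the rank-one case, we obtain $\Ee \cong \bigoplus_{i=1}^r \Oo_T(a_i)$.

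By the adjunction $\Hom_{\Oo_T}(\pi^* E_0, \Ee) = \Hom_{\Oo_L}(E_0, \pi_*\Ee)$, producing this morphism amounts to producing an $\Oo_L$-linear section $s\colon E_0 \to \Ee$ of the canonical quotient $\Ee \twoheadrightarrow E_0$---equivalently, to splitting the $\Oo_L$-module sequence
\[ 0 \to I \otimes_{\Oo_L} E_0 \to \Ee \to E_0 \to 0 \]
obtained by restricting scalars along $\Oo_L \hookrightarrow \Oo_T$. Given such an $s$, the induced morphism $\tilde s\colon \pi^* E_0 \to \Ee$ fits into a map of short exact sequences, restricting to the identity on the quotient $E_0$ and to the canonical isomorphism $I \otimes E_0 \xrightarrow{\sim} I\Ee$ on the sub (the latter coming from local freeness of $\Ee$ over $\Oo_T$, which forces $I\Ee \cong I \otimes_{\Oo_L} E_0$). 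The five-lemma then forces $\tilde s$ to be an isomorphism.

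Everything therefore reduces to splitting the above $\Oo_L$-module sequence, whose extension class lies in
\[ \Ext^1_{\Oo_L}(E_0, I \otimes E_0) = H^1\bigl(L, \End(E_0) \otimes I\bigr) = \bigoplus_{i,j} H^1(\Oo_L(a_j - a_i + a - 2)). \]
This is the main obstacle: naive $H^1$-vanishing on $\PP^1$ requires $a_i - a_j \leq a-1$ for all $i,j$, which is restrictive for small $a$ or wide splitting types. The hard part of the proof is therefore to produce the required splitting using the finer $\Oo_T$-module structure of $\Ee$ and not merely its $\Oo_L$-module structure---for example, by tracking how the extension class is constrained by the condition $I^2=0$ on the $I$-action, or by modifying a candidate section via suitable $\Oo_T$-linear automorphisms of $\Ee$ to kill the relevant obstruction.
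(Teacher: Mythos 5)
Your reduction is set up correctly, but the proof is not complete: everything hinges on splitting the $\Oo_{T_{\mathrm{red}}}$-module sequence $0 \to I\otimes E_0 \to \Ee \to E_0 \to 0$, you correctly observe that the ambient obstruction group $\bigoplus_{i,j}H^1(\Oo_{\PP^1}(a_j-a_i+a-2))$ is nonzero as soon as some $a_i-a_j\ge a$, and then you stop, offering only possible headings for an argument (``tracking how the class is constrained by $I^2=0$'', ``modifying a section by automorphisms'') rather than the argument itself. Note that the splitting of this sequence is essentially equivalent to the conclusion of the lemma --- if $\Ee\cong\bigoplus_i\Oo_T(a_i)$ then the sequence splits because $\Oo_T$ itself splits as $\Oo_L\oplus I$ --- so the step you leave open carries the entire content of the statement. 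As written, this is a genuine gap, not a proof.

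For comparison, the paper sidesteps the negative twists by inducting on the rank and always splitting off the line bundle of \emph{largest} degree: an $h^0$-count using the sequence (\ref{eqb1}) shows that a general map $\Oo_T(a_1)\to\Ee$, with $a_1=\max_i a_i$, is injective with locally free cokernel $\Ff$, which by the inductive hypothesis is $\bigoplus_{i\ge2}\Oo_T(a_i)$; the resulting extension of $\Ff$ by $\Oo_T(a_1)$ is classified by $\bigoplus_{i\ge2}\Ext^1_T(\Oo_T(a_i),\Oo_T(a_1))=\bigoplus_{i\ge2}H^1(\Oo_T(a_1-a_i))$, and these groups vanish because $a_1-a_i\ge0$ (Lemma \ref{u1.2}). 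The ordering of the splitting type is thus exploited so that only nonnegative twists ever occur --- precisely the feature your one-shot comparison map $\pi^*E_0\to\Ee$ cannot see, since it must handle all pairs $(i,j)$ simultaneously. To salvage your approach you would need to first extract the top-degree line subbundle and proceed by induction, at which point you would essentially be reproducing the paper's argument.
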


\begin{proof}
We use induction on $r$; the case $r=1$ being true by \cite[Proposition 4.1]{be}. Assume $r\ge1$ and that the lemma is true for lower ranks. 

Let $s$ be the maximal positive integer $i$ such that $a_i=a_1$ for all $i\le s$ and let $h$ be the minimal positive integer $i\le r$ such that $a-2+a_i -a_1\ge 0$. Tensoring (\ref{eqb1}) with $\Ee (-a_1)$, we get $h^0(\Ee (-a_1)) = s + \sum _{i=1}^{h} (a+a_i-a_1-1)$. Fix a general map $f: \Oo _X(a_1)\to \Ee$. Since we have $h^0(\Ee (-a_1)) > \sum _{i=1}^{h} (a+a_i-a_1-1)$, so $f$ induces an injective map $\phi: \Oo _{\PP^1}(-a_1)\to \oplus _{i=1}^{r} \Oo _{\PP^1}(a_i-a_1)$ with locally free cokernel. Thus $f$ is injective also with locally free cokernel, say $\Ff$. By the inductive assumption we have $\Ff \cong \oplus _{i=2}^{r} \Oo _T(a_i)$. Since $a$ is nonnegative, so the sequence (\ref{eqb1}) gives $h^1(\Oo _T(j)) =0$ for all $j\ge 0$. Hence every extension of $\Ff$ by $\Oo _T(a_1)$ splits and we get $\Ee \cong \oplus _{i=1}^{r} \Oo _T(a_i)$.
\end{proof}

\begin{definition}
The sequence of integers $(a_1,\dots ,a_r)$ with $a_1 \ge \cdots \ge a_r$ in the statement of Lemma \ref{u2} is called the {\emph {splitting type}} of $\Ee$. 
\end{definition}

\begin{remark}\label{u2.1}
By Lemma \ref{u1.2} the bundle $\Ee \cong \oplus _{i=1}^{r} \Oo _T(a_i)$ is spanned if and only
if $a_r\ge 0$. Since $\dim (T) =1$, so a dimension counting gives that if $\Ee$ is spanned and $h^0(\Ee )>r$, then $\Ee$ is spanned by a general $(r+1)$-dimensional linear subspace of $H^0(\Ee )$ by \cite[Theorem 2]{a}. Again by Lemma \ref{u1.2}, if $\Ee$ is spanned, then we get $h^1(\Ee )=0$. Now assume $a_r\ge 0$ and so $\Ee$ is spanned. By Lemma \ref{u1}, the pair $(\Ee ,H^0(\Ee ))$ induces an embedding of $T$ into the Grassmannian $\mathrm{Gr}(N,r)$ with $N = ra-r +2r(a_1+\cdots +a_r)$ if and only if $a_1>0$. In the case of $a_1>0$, the image of $T$ in $\mathrm{Gr}(N,r)$ has degree $2$, i.e. it is a double structure on a line, if and only if $a_1=1$ and $a_i=0$ for all $i>0$.
\end{remark}

The case $a=1$ is very particular because it is the only case with $h^0(\Oo _T)=1$, and it is treated separately below.

\begin{proposition}\label{u5}
$\Dd_{1,4}$ is a smooth and irreducible open subset of $\mathbf{Hilb}_{Q_4}(2,1)$ and $\dim (\Dd_{1,4}) =9$.
\end{proposition}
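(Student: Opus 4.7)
The strategy is to exhibit $\Dd_{1,4}$ (or a dense open subset) as an open subset of the Grassmannian $\mathrm{Gr}(3,6)$ of $2$-planes in $\PP^5$, via the linear span. Every element $C \in \Dd_{1,4}$ spans a $\PP^2$: for reduced conics this is elementary plane geometry, and for a double line $C \simeq T[1]$, Lemma \ref{u1} implies the embedding is realized by the complete linear system $|\Oo_T(1)|$, which has $h^0(\Oo_T(1))=3$, forcing the linear span to be a $\PP^2$. This gives a span morphism $\phi: \Dd_{1,4} \to \mathrm{Gr}(3,6)$, $C \mapsto \langle C\rangle$.

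Next, let $U := \{P \in \mathrm{Gr}(3,6) : P \not\subset Q_4\}$. For $P \in U$ the restriction $q|_P$ of the quadric defining $Q_4$ is a nonzero quadratic form on $P \cong \PP^2$, and $\psi(P) := P \cap Q_4$ is a plane conic, hence locally Cohen-Macaulay with Hilbert polynomial $2t+1$, so $\psi(P) \in \Dd_{1,4}$. Direct verification shows $\phi,\psi$ are mutually inverse on $\phi^{-1}(U)$ and $U$: one has $\phi\circ\psi(P)=P$ since a plane conic spans its ambient plane; conversely, if $\phi(C)=P \in U$ then $C \subseteq \langle C\rangle \cap Q_4 = P \cap Q_4$, and equality of Hilbert polynomials forces $C = P \cap Q_4$. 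Thus $\phi^{-1}(U) \cong U$ is a smooth irreducible open subscheme of dimension $\dim\mathrm{Gr}(3,6) = 9$, which is already open in $\mathbf{Hilb}_{Q_4}(2,1)$ since $U$ is open in the Grassmannian.

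The remaining step is to check that $\phi^{-1}(U)$ accounts for all of $\Dd_{1,4}$ (as described in the proposition). A curve $C \in \Dd_{1,4}$ with $\langle C\rangle \subseteq Q_4$ would lie in one of the two $\PP^3$-families of maximal $2$-planes of $Q_4 \cong \mathrm{Gr}(2,4)$, producing a closed subset of dimension $3+5 = 8 < 9$ that must lie in the closure of $\phi^{-1}(U)$ rather than as a separate component. The main technical obstacle is the attendant tangent-space computation via
$$ 0 \to N_{C/P} \to N_{C/Q_4} \to N_{P/Q_4}|_C \to 0, $$
which must yield $h^0(N_{C/Q_4}) = 9$ at every such boundary conic; this is especially delicate for double lines $C \simeq T[1]$ contained in a plane of $Q_4$, where the normal bundle to a ribbon requires care and one needs a careful splitting-type analysis of $N_{P/Q_4}|_C$ along the lines of Lemma \ref{u2}.
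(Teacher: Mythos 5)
Your overall strategy coincides with the paper's: use the span map to identify the locus of conics whose plane is not contained in $Q_4$ with the open set $U\subset\mathrm{Gr}(3,6)$, and treat the conics lying in planes of $Q_4$ as a lower-dimensional boundary. But as written the argument has a genuine gap: the normal-bundle computations that carry the whole proposition are announced and then not performed. First, the identification $\phi^{-1}(U)\cong U$ as schemes (and hence the smoothness of $\mathbf{Hilb}_{Q_4}(2,1)$ along $\phi^{-1}(U)$, which is part of the statement) is not a formal consequence of the set-theoretic bijection: the injective morphism $U\to\mathbf{Hilb}_{Q_4}(2,1)$ could a priori land inside a larger or everywhere non-reduced scheme. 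You need $h^0(N_{C|Q_4})=9$ there; this is easy (as in the paper, $C=P\cap Q_4$ is a complete intersection of three hyperplanes in $Q_4$, so $N_{C|Q_4}\cong\Oo_C(1)^{\oplus 3}$, giving $h^0=9$ and $h^1=0$), but it must be said. Second, and more seriously, the assertion that the $8$-dimensional locus of conics with $\langle C\rangle\subset Q_4$ ``must lie in the closure of $\phi^{-1}(U)$ rather than as a separate component'' does not follow from the dimension count alone: a closed subset of dimension $8$ whose open complement has dimension $9$ can perfectly well be a separate irreducible component of the Hilbert scheme. Ruling this out is exactly the tangent-space computation you defer as ``the main technical obstacle''; without it neither irreducibility nor smoothness is established, so the proposal is a plan rather than a proof.

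For what it is worth, that boundary computation is less delicate than you fear. The paper handles it by observing that for a conic $A$ in a plane $W\subset Q_4$ one has $N_{A|Q_4}\cong\Oo_A(2)\oplus (N_{W|Q_4})_{|A}$, where $N_{W|Q_4}$ is the restriction to $W$ of the universal quotient bundle on $Q_4\cong\mathrm{Gr}(2,4)$ and is therefore globally generated. Since $h^1(\Oo_A)=0$ for every conic, including the double lines $T[1]$ (use the sequence (\ref{eqb1})), global generation already forces $h^1\bigl((N_{W|Q_4})_{|A}\bigr)=0$, hence $h^1(N_{A|Q_4})=0$ and $h^0(N_{A|Q_4})=\chi(N_{A|Q_4})=5+4=9$; no splitting-type analysis of bundles on ribbons in the spirit of Lemma \ref{u2} is required. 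With this in place the Hilbert scheme is smooth of dimension $9$ at every point of $\Dd_{1,4}$, the $8$-dimensional boundary cannot be a component, and the unique component through each boundary point must meet $\phi^{-1}(U)$, which yields the irreducibility you asserted.
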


\begin{proof}
For fixed $A\in \Dd_{1,4}$, we have $\langle A\rangle \cong \PP^2$ since $h^0(\Oo_A(1))=3$. First assume $\langle A\rangle \subset Q$.  In $Q_4$, we have two families of planes, each of them is isomorphic to $\PP^2$, and that the set of all double lines in a plane is isomorphic to a plane; the dual plane of all lines of $\PP^2$. Hence we get in this way two irreducible families $\Tt _1, \Tt _2$ of elements of $\Dd _{1,4}$, each of them of dimension $4$. Fix $A\in \Tt _1$ and set $W:= \langle A\rangle$. Since $A$ is a conic of $W$, we have $N_{A|Q_4}\cong \Oo _A(2)\oplus {(N_{W|Q_4})}_{|A}$. The bundle $N_{W|Q_4}$ is the restriction to $W$ of the universal quotient bundle $\Ee$ on $Q_4$ and so it is spanned. We get $h^1(A,{(N_{W|Q_4})}_{|A}) =0$ and so $h^1(N_{A|Q_4})=0$. We get that $\mathbf{Hilb}_{Q_4}(2,1)$ is smooth and of dimension $h^0(N_{A|Q_4}) = 9$ at $[A]$.

Now fix a line $L\subset Q_4$ and let $H\subset \PP^5$ be a general hyperplane containing $L$. The quadric $Q':= H\cap  Q_4$ is smooth. Fix $o_1, o_2\in L$ with $o_1\ne o_2$ and let $H_i$, $i=1,2$, be the tangent hyperplane $T_{o_i}Q_4$ of $Q_4$ at $o_i$. The scheme $D:= Q_4\cap H\cap H_1\cap H_2$ has $L$ as its reduction and hence it is a complete intersection curve $D\subset \PP^5$ with $D_{\mathrm{red}} =L$ and $\langle D\rangle = H\cap H_1\cap H_2$, i.e. $\dim \langle D\rangle =2$. Hence we get $D\in \Dd_{1,4}$. We also get that $D$ is the complete intersection of $Q_4$ and three hyperplanes, and so $N_{D|Q_4} \cong \Oo _D(1)^{\oplus 3}$. We get that $\mathbf{Hilb}_{Q_4}(2,1)$ is smooth and of dimension $9$ at $[D]$. This part of $\Dd _{1,4}$ is irreducible, because the set of all lines of $Q_4$ is irreducible. Since $\dim (\Tt _i) <9$, we get that $\Dd _{1,4}$ has a unique irreducible component.
\end{proof}

\begin{proposition}\label{xxxx1}
For $\Dd_{1,n}$ with $n\ge 5$, we get the following:
\begin{itemize}
\item[(i)] $\Dd_{1,n}$ is irreducible. 
\item[(ii)] Each $C\in \Dd_{1,n}$ is a flat limit of a smooth conic of $Q_n$.
\item[(iii)] $\mathbf{Hilb}(Q_n)$ is smooth at $C\in \Dd_{1,n}$. 
\end{itemize}
\end{proposition}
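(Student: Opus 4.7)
The plan is to parametrize $\Dd_{1,n}$ as the image of an incidence variety of pairs $(L, W)$, where $L \subset Q_n$ is a line and $W \supset L$ is a plane satisfying a first-order tangency condition, and then derive (i), (iii), (ii) in turn. For each $C \in \Dd_{1,n}$, set $L := C_{\mathrm{red}}$ and $W := \langle C \rangle$; since $h^0(\Oo_C(1)) = 3$ the span $W$ is a plane, and $C$ is the unique double structure of $L$ in $W$. Conversely, the condition $C \subset Q_n$ translates into the inclusion $W \subset \bigcap_{p \in L} T_p Q_n$. Working in coordinates with $L = \{x_2 = \cdots = x_{n+1} = 0\}$ and defining form $q = x_0 L_0 + x_1 L_1 + Q'(x_2,\dots,x_{n+1})$, smoothness of $Q_n$ along $L$ forces $L_0, L_1$ to be linearly independent, so $\bigcap_{p \in L} T_p Q_n = \{L_0 = L_1 = 0\}$ is a codimension-$2$ linear subspace containing $L$, and the planes through $L$ inside it form a $\PP^{n-3}$. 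Since the Fano variety of lines in $Q_n$ is irreducible of dimension $2n-3$ for $n \ge 5$, the incidence variety is irreducible of dimension $3n-6$, proving (i).

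For (iii), I would compute $h^1(N_{C|Q_n}) = 0$ from the two exact sequences
\[
0 \to N_{C|W} \to N_{C|\PP^{n+1}} \to N_{W|\PP^{n+1}}|_C \to 0, \qquad 0 \to N_{C|Q_n} \to N_{C|\PP^{n+1}} \to \Oo_C(2) \to 0.
\]
Using $N_{C|W} \cong \Oo_C(2)$, $N_{W|\PP^{n+1}}|_C \cong \Oo_C(1)^{n-1}$, and $h^1(\Oo_C(t)) = 0$ for $t \ge 0$, one gets $h^1(N_{C|\PP^{n+1}}) = 0$ and $h^0(N_{C|\PP^{n+1}}) = 3n+2$, reducing the vanishing of $h^1(N_{C|Q_n})$ to surjectivity of $H^0(N_{C|\PP^{n+1}}) \to H^0(\Oo_C(2))$. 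If $W \not\subset Q_n$, I realize deformations as families $W \cap (Q_n + \epsilon R)$ for $R \in H^0(\Oo_{\PP^{n+1}}(2))$; the image of the map is the restriction $H^0(\Oo_{\PP^{n+1}}(2)) \to H^0(\Oo_C(2))$, which is surjective because $h^1(\Ii_C(2)) = 0$ (verified via $0 \to \Ii_W(2) \to \Ii_C(2) \to \Oo_W \to 0$, together with the standard vanishings $h^1(\Ii_W(2)) = h^1(\Oo_W) = 0$). If $W \subset Q_n$, I write $Q_n = \sum_{i=3}^{n+1} x_i f_i$ in coordinates adapted to $W$; smoothness of $Q_n$ along $W$ forces the linear forms $\{f_i|_W\}$ to span $H^0(\Oo_W(1))$, and the map $H^0(\Oo_C(1))^{n-1} \to H^0(\Oo_C(2))$ induced by multiplication by the $f_i|_C$ is then surjective, since multiplication $H^0(\Oo_C(1))^{\otimes 2} \to H^0(\Oo_C(2))$ is surjective (the ideal of $C$ in $W$ is generated in degree $2$). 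Thus $h^1(N_{C|Q_n}) = 0$ and $h^0(N_{C|Q_n}) = 3n-3$, giving (iii).

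For (ii), the smooth conics in $Q_n$ form an open, irreducible, $(3n-3)$-dimensional subset of $\mathbf{Hilb}(Q_n)$, parametrized by an open subset of $G(3,n+2)$. By (iii) the unique irreducible component of $\mathbf{Hilb}(Q_n)$ through $[C]$ also has dimension $3n-3$, so it suffices to exhibit a flat one-parameter deformation of $C$ to a smooth conic. If $W \not\subset Q_n$, I perturb $W$ to a general $W_\epsilon \in G(3,n+2)$ so that $W_\epsilon \cap Q_n$ is a smooth conic for $\epsilon \neq 0$. If $W \subset Q_n$, a general conic in $W$ is smooth and automatically lies in $Q_n$, yielding the deformation inside the fixed plane. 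I expect the main technical difficulty to be the surjectivity check in the case $W \subset Q_n$ of step (iii): here $dQ_n$ vanishes on $N_{C|W}$, so one must carefully identify the map $H^0(N_{C|\PP^{n+1}}) \to H^0(\Oo_C(2))$ with the multiplication map determined by the coefficients $f_i$, and use smoothness of $Q_n$ along $W$ to ensure the $f_i|_W$ span enough of $H^0(\Oo_W(1))$.
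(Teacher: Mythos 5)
There is a genuine gap, and it originates in a misreading of the definition of $\Dd_{1,n}$. In this paper $\Dd_{a,n}$ is defined as the set of \emph{all} locally Cohen--Macaulay curves $C\subset Q_n$ with $\chi_{\Oo_C}(t)=2t+a$; for $a=1$ this is the set of all plane conics in $Q_n$ (smooth conics, pairs of incident lines, and double lines), since $h^0(\Oo_C(1))=3$ forces $C$ to be a conic in the plane $\langle C\rangle$. It is \emph{not} the set of double lines (that is the role of $\Dd_a$ on $Q_3$, and of $\Dd_a(L)$). Your opening line ``set $L:=C_{\mathrm{red}}$ \dots $C$ is the unique double structure of $L$ in $W$'' restricts the whole argument to the double-line stratum. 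Consequently your incidence variety of pairs $(L,W)$ with $L\subset W\subset \bigcap_{p\in L}T_pQ_n$ --- whose construction and dimension count $3n-6$ are themselves correct --- parametrizes only a closed subset of codimension $3$ in $\Dd_{1,n}$, which has dimension $3n-3$ (compare Proposition \ref{u5}: $\dim\Dd_{1,4}=9$, not $6$; the same discrepancy would have flagged the issue). So part (i) is not proved as stated, and parts (ii) and (iii) are only established on the double-line locus, whereas the proposition asserts them for every $C\in\Dd_{1,n}$.

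The good news is that your technical work survives the correction with little change, because it never really uses that $C$ is non-reduced: any $C\in\Dd_{1,n}$ is a Cartier divisor of degree $2$ in $W=\langle C\rangle$ with $N_{C|W}\cong\Oo_C(2)$ and $h^1(\Oo_C(t))=0$ for $t\ge 0$, and the dichotomy $W\subset Q_n$ versus $C=W\cap Q_n$ is exactly the one the paper uses. Your treatment of (iii) is a more computational variant of the paper's: the paper argues directly inside $Q_n$ via $0\to N_{C|W}\to N_{C|Q_n}\to (N_{W|Q_n})_{|C}\to 0$ when $W\subset Q_n$ (using global generation of $N_{W|Q_n}$), and via $N_{C|Q_n}\cong\Oo_C(1)^{\oplus(n-1)}$ when $C=W\cap Q_n$ is a complete intersection; your route through $N_{C|\PP^{n+1}}$ and the surjectivity of $H^0(N_{C|\PP^{n+1}})\to H^0(\Oo_C(2))$, including the observation that smoothness of $Q_n$ along $W$ forces the forms $f_i|_W$ to span $H^0(\Oo_W(1))$, is correct but does more work for the same conclusion. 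Your (ii) coincides with the paper's. Finally, (i) should be deduced from (ii) and (iii) rather than from the incidence variety: every $C\in\Dd_{1,n}$ is a smooth point of $\mathbf{Hilb}_{Q_n}(2,1)$ lying in the closure of the irreducible $(3n-3)$-dimensional family of smooth conics, hence $\Dd_{1,n}$ is contained in, and equals, a single irreducible component.
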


\begin{proof}
Fix $C\in \Dd _{1,n}$ and let $M=\langle C \rangle \subset \PP^{n+1}$ denote the plane spanned by $C$. If $M$ is contained in $Q_n$, then we may deform $C$ to a smooth conic inside $M$ and so inside $Q_n$. Since the normal bundle $N_{M|Q_n}$ is globally generated, we have $h^1(C,{\left( N_{M|Q_n}\right)}_{|C})=0$. From $N_{C|M}\cong \Oo _C(2)$ we have $h^1(N_{C|M})=0$. The inclusion $C\hookrightarrow M$ is a regular embedding and so the natural map $N_{C|Q_n} \to {\left( N_{M|Q_n}\right)}_{|C}$ is surjective. Hence the normal sheaf sequence of the inclusions $C\subset M\subset Q_n$ gives $h^1(N_C)=0$ and so $\mathbf{Hilb}(Q_n)$ is smooth at $C$.

If $M$ is not contained in $Q_n$, then we have $C = Q_n\cap M$ as schemes and $N_C\cong \Oo _C(1)^{\oplus (n-1)}$. Thus we have $h^1(N_C)=0$ and so $\mathbf{Hilb}(Q_n)$ is smooth at $C$.
Moving $M$ to a plane in $\PP^{n+1}$ transversal to $Q_n$, we get that $C$ may be deformed inside $Q_n$ to a smooth conic. 
\end{proof}

\begin{proposition}\label{u3}
Let $T=T[a]$ with $a\ge 2$. For $\Ee := \Oo _T(1)\oplus \Oo _T$ and a general $4$-dimensional linear subspace $V$ of $H^0(\Ee)$. Then we have the following: 
\begin{enumerate}
\item $(\Ee ,V)$ induces an embedding of $T$.
\item $\Dd _{a,4}$ is non-empty and irreducible.
\item Every element of $\Dd _{2,4}$ is a flat limit inside $Q_4$ of a family of disjoint unions of two lines and a general element of $\Dd _{2,4}$ is a smooth point of $\mathbf{Hilb}(Q_4)$.
\end{enumerate}
\end{proposition}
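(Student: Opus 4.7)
The proof splits naturally into the three parts of the statement, with the splitting classification of Lemma \ref{u2} and the embedding criterion of Lemma \ref{u1} providing the common backbone.

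For (1), I would apply the embedding criterion of Lemma \ref{u1} directly to the pair $(\Ee, V)$. By Lemma \ref{u1.2} both summands of $\Ee = \Oo_T(1) \oplus \Oo_T$ are globally generated, so $\Ee$ is, with $h^0(\Ee) = (a+1) + a = 2a+1 \ge 5$. Remark \ref{u2.1} shows that a general $(r+1) = 3$-dimensional subspace of $H^0(\Ee)$ already spans $\Ee$, so any general $4$-dimensional $V$ spans $\Ee$ and induces a morphism $f : T \to \mathrm{Gr}(2,V) \cong Q_4$. The restriction of $f$ to $T_{\mathrm{red}}$ is an embedding onto a line of $Q_4$ since $\det(\Ee|_{T_{\mathrm{red}}})$ has degree $1$. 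For a point $p \in T_{\mathrm{red}}$ and a tangent vector $v$ of $T$ transverse to $T_{\mathrm{red}}$, I would verify the strict inequality $h^0(\Ii_v \otimes \Ee) < h^0(\Ii_p \otimes \Ee)$ exactly as in Lemma \ref{u1}, using the cohomology vanishings of Lemma \ref{u1.2} to control $\Ee$ twisted by infinitesimal ideals, and relying on the generality of $V$ to propagate the inequality from $H^0(\Ee)$ to $V$.

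For (2), non-emptiness follows from (1). For irreducibility, every $C \in \Dd_{a,4}$ arises from a surjection $W \otimes \Oo_{T[a]} \twoheadrightarrow \Ff$ where $W \cong \CC^4$ is the ambient space of $Q_4 = \mathrm{Gr}(2, W)$ and $\Ff = Q|_C$ with $Q$ the tautological quotient. Since $\Ff$ is a quotient of the trivial rank-$4$ bundle, $\Ff|_{T_{\mathrm{red}}}$ is globally generated and has non-negative splitting entries; combined with total degree $1$, the only possibility is splitting type $(1,0)$, and Lemma \ref{u2} forces $\Ff \cong \Ee$. Hence $\Dd_{a,4}$ is dominated by an open subset of the irreducible space $\mathrm{Hom}(W, H^0(\Ee)) \cong \CC^{4(2a+1)}$, modulo the irreducible automorphism groups $\mathrm{Aut}(T[a])$ and $\mathrm{Aut}(\Ee)$, hence irreducible.

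For (3), I would first establish the flat-limit claim. A general line $L \subset Q_4$ has $N_{L|Q_4} \cong \Oo_L(1)^{\oplus 2} \oplus \Oo_L$, so lines of $Q_4$ form a smooth irreducible $5$-dimensional family, and the locus $\Ww$ of disjoint line pairs is an irreducible $10$-dimensional subset of $\mathbf{Hilb}(Q_4)$. For a line $L$ and a non-trivial normal direction $s \in H^0(N_{L|Q_4})$, the $1$-parameter family $L \cup L_t$ of disjoint line pairs where $L_t$ deforms along $s$ has flat limit at $t=0$ a ribbon supported on $L$ whose double ideal is encoded by $s$, placing an element of $\Dd_{2,4}$ in $\overline{\Ww}$. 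Since $\Dd_{2,4}$ is irreducible of dimension $10$ by the parameter count in (2) and $\overline{\Ww}$ is irreducible of the same dimension, we conclude $\Dd_{2,4} \subseteq \overline{\Ww}$. For smoothness at a general $C \in \Dd_{2,4}$, I would compute $T_{Q_4}|_C = (S')^{\vee} \otimes \Ee$ via the Grassmannian tangent bundle $T_{Q_4} = S^{\vee} \otimes Q$, where $S' = \ker(V \otimes \Oo_T \twoheadrightarrow \Ee)$: as a rank-$2$ subbundle of the trivial bundle with $\det(S'|_{T_{\mathrm{red}}}) = \Oo(-1)$, its splitting type is forced to be $(0,-1)$, so Lemma \ref{u2} gives $S' \cong \Oo_T \oplus \Oo_T(-1)$ and $T_{Q_4}|_C \cong \Oo_T(2) \oplus \Oo_T(1)^{\oplus 2} \oplus \Oo_T$, which has $h^1 = 0$ by Lemma \ref{u1.2}. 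From the natural map $T_{Q_4}|_C \to N_{C|Q_4}$ (whose cokernel is the torsion sheaf $\Ext^1(\Omega_C, \Oo_C)$) combined with this vanishing, we get $h^1(N_{C|Q_4}) = 0$.

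The main obstacle lies in the smoothness computation in (3): handling the cokernel $\Ext^1(\Omega_C, \Oo_C)$ for the singular ribbon $C$ requires either a direct local analysis of the tangent sheaf or an indirect argument combining the already-established flat-limit containment $\Dd_{2,4} \subseteq \overline{\Ww}$ with upper semicontinuity of tangent-space dimension and a Riemann-Roch-style computation of $\chi(N_{C|Q_4}) = 10$ to pin down $h^0 = 10$ and $h^1 = 0$. The embedding verification in (1) is routine but requires careful bookkeeping of how the generic $4$-dimensional $V$ interacts with the ribbon structure via the defining sequence (\ref{eqb1}).
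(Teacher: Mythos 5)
Your treatment of parts (1) and (2) follows the paper's route: spannedness of $\Ee$ via Lemma \ref{u1.2} and Remark \ref{u2.1}, the local-embedding test $h^0(\Ii_v\otimes\Ee)<h^0(\Ii_p\otimes\Ee)$ from Lemma \ref{u1}, and genericity of $V$ (the paper makes the last step precise with a Schubert-cycle codimension count: the bad locus $B_v\subset\mathrm{Gr}(4,H^0(\Ee))$ has codimension $4$ and the $(p,v)$'s form a family of dimension at most $2$). Your identification of the restricted tautological quotient with $\Ee$ via Lemma \ref{u2} is a correct and welcome expansion of the paper's terse ``it suffices to prove (1).''

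Part (3) is where there are genuine gaps. First, the dimension count: the double-line locus in $\Dd_{2,4}$ has dimension $9$, not $10$ --- a line contributes $5$ parameters and the Ferrand data, a surjection $N_{L|Q_4}^\vee\cong\Oo_L(-1)^{\oplus 2}\oplus\Oo_L\to\Oo_L$ modulo scalars, contributes $4$ (an open subset of $\PP^4$). More importantly, the inference ``both irreducible of the same dimension, hence $\Dd_{2,4}\subseteq\overline{\Ww}$'' is not valid, and would in fact be self-defeating: the ribbons are disjoint from $\Ww$, so if they lay in $\overline{\Ww}$ they would form a proper closed subset of an irreducible $10$-fold and could not themselves be $10$-dimensional. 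What actually closes the argument is irreducibility of the double-line locus together with the statement for \emph{one} (or a general) element; alternatively your degeneration $L\cup L_t$ can be pushed to cover \emph{every} ribbon directly, since every surjection $N_{L|Q_4}^\vee\to\Oo_L$ is a tangent vector to the smooth Fano variety of lines and hence integrates to an arc $\{L_t\}$ with $L_t\cap L=\emptyset$ for $t\ne 0$ --- but you do not carry this out. Second, the smoothness computation: for a ribbon $C$ the map $T_{Q_4|C}\to N_{C|Q_4}$ does not have torsion cokernel ($C$ is nowhere reduced, so $\Omega_C$ fails to be locally free along all of $C$), so $h^1(T_{Q_4|C})=0$ does not transfer to $h^1(N_{C|Q_4})=0$; and your fallback via semicontinuity from nearby line pairs runs the wrong way (it gives $h^0(N_C)\ge 10$, not $\le 10$, and says nothing about $h^1$ at the special point $C$). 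The paper avoids both issues at once: by Lemma \ref{a2} an element of $\Dd_{2,4}$ lying in a smooth hyperplane section sits on a smooth quadric surface $Q_2$ with $N_{C|Q_2}\cong\Oo_C$, whence $h^1(N_{C|Q_2})=0$ and $h^1(\Oo_C(1))=0$ give $h^1(N_C)=0$ through the normal bundle filtration, and the same containment in $Q_2$ exhibits $C$ as a limit of pairs of disjoint rulings; semicontinuity (now applied within $\Dd_{2,4}$, in the correct direction) then handles the general element.
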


\begin{proof}
It is sufficient to prove part (1). Note that $V$ spans $\Ee$ by Remark \ref{u2.1}. 

For each point $p\in \PP^1 = T_{\mathrm{red}}$, let $T_pT$ be the Zariski tangent space of $T$ and $p$ and call $D_p\subset T_pT$ the Zariski tangent space of $T_{\mathrm{red}}$. Then we have $T_pT\setminus D_p\cong \AA ^2$.  For $v\in T_pT\setminus D_p$, the sequence (\ref{eqb1}) gives $h^0(T,\Ii _{v}) = h^0(\Oo _T)-2$ since $a\ge 2$. So we have $h^0(T,\Ii _{v}\otimes \Ee )=h^0(\Ee )-4$. 

Consider $U_v:= H^0(T,\Ii _{v}\otimes \Ee )$ as a $4$-codimensional linear subspace of $H^0(\Ee)$. Let $B_v$ be the Schubert cycle of all $V\in \mathrm{Gr}(4,H^0(\Ee ))$ with $\dim (V\cap U_v) >0$. The variety $B_v$ has codimension $4$ in $\mathrm{Gr}(4,H^0(\Ee ))$. Since $\dim (T)=1$, the set 
$$\bigcup _{p\in \PP^1, ~v\in T_pT\setminus D_p} B_v$$
is not dense in $\mathrm{Gr}(4,H^0(\Ee ))$, and so for a general $V\in \mathrm{Gr}(4,H^0(\Ee ))$ we have $V\cap B_v =\emptyset$ for all $p$ and $v\in T_pT\setminus D_p$. For a general $V$ we may also assume that it spans $\Ee$. Let $f: T\to \mathrm{Gr}(V, 2)$ denote the morphism associated to $(\Ee ,V)$. Note that $f(T_{\mathrm{red}})$ is a line and that $f_{|T_{\mathrm{red}}}$ is injective. Thus $f$ is an embedding if and only if it is a local embedding at all $p\in \PP^1$. By \cite[proof of II.7.3]{Hartshorne} it is sufficient to prove that its differential induces an embedding of the Zariski tangent space $T_pT$ of $T$ at a fixed point $p$. Fix $v\in T_pT\setminus \{0\}$. If $v$ is tangent to $T_{\mathrm{red}}$, then $df _{p\ast}(v)\ne 0$, because $f_{|T_{\mathrm{red}}}$ is an embedding. If $v\notin D_p$, we have $df _{p\ast}(v)\ne 0$, because $V\cap B_v =\emptyset$, i.e. $V\cap H^0(\Ii _v\otimes \Ee)=0$.

Now assume $a=2$. Since $\Dd _{2,4}$ is irreducible, so it is sufficient to prove that a general $C\in \Dd _{2,4}$ satisfies $h^1(N_C)=0$ and that $C$ is a flat limit of a family of disjoint unions of two lines. Fix a smooth hyperplane section $Q_3\subset Q_4$ and any $C$ contained in $\Dd _{2,4}$. By Lemma \ref{a2} $C$ is a flat limit of a family of disjoint unions of two lines of $Q_3$. By the proof of the Claim in the proof of Lemma \ref{a2}, $C$ is contained in a smooth quadric surface $Q_2\subset Q_3$. We have $N_{C|Q_2} \cong \Oo _C$ and so $h^1(N_{C|Q_2}) =0$. Since $h^1(\Oo _C(1)) =0$, we get $h^1(N_C)=0$. By the semicontinuity, we get $h^1(N_D)=0$ for a general $D\in D_{2,4}$.
\end{proof}

\begin{remark}
If $a =2$ (resp. $a=3$), then any $C\in \Dd _{a,4}$ is contained in a $3$-dimensional (resp. $4$-dimensional) linear subspace, because $h^0(\Oo _C(1)) =a+2$.
\end{remark}

\begin{proposition}\label{xxxx2}
For $\Dd_{2,n}$ with $n\ge 5$, we get the following:
\begin{itemize}
\item[(i)] $\Dd _{2,n}$ is irreducible. 
\item[(ii)] Each $C\in \Dd _{2,n}$ is a flat limit of a family of two disjoint lines.
\item[(iii)] $\mathbf{Hilb}(Q_n)$ is smooth at $C\in \Dd_{2,n}$, outside the $(4n-9)$-dimensional family of curves in $\Dd _{2,n}$ whose linear span intersects $Q_n$
in the double plane.
\end{itemize}
\end{proposition}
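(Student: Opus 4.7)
Fix $C \in \Dd_{2,n}$. By the ribbon sequence (\ref{eqb1}) for $a=2$ and Lemma \ref{u1.2}, any ribbon in $\Dd_{2,n}$ satisfies $h^0(\Oo_C(1))=4$ (and the same holds for a disjoint pair of lines); hence $M := \langle C\rangle \cong \PP^3$. The intersection $M \cap Q_n$ is either $M$ itself (only when $M \subset Q_n$, which requires $n\ge 6$) or a quadric surface $S\subset M$ of rank $\in \{1,2,3,4\}$; the rank-$1$ case (the double plane) is the bad locus in (iii).

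For (ii), the plan is to reduce to the $n=4$ case of Proposition \ref{u3}. When $S$ has rank at least $3$, the Grassmannian $\mathrm{Gr}(2,n-2)$ of $5$-dimensional linear subspaces $\widetilde M\supset M$ in $\PP^{n+1}$ is positive-dimensional, and a generic $\widetilde M$ produces a smooth quadric fourfold $\widetilde M \cap Q_n$ containing $C$; Proposition \ref{u3}(3) applied inside $\widetilde M \cap Q_n \subset Q_n$ gives the flat limit by two disjoint lines. The cases $S$ of rank $\le 2$ or $M\subset Q_n$ are handled by direct degeneration: every ribbon structure on a line $L\subset Q_n$ with prescribed tangent data is the flat limit of a one-parameter family of two lines in $Q_n$ approaching $L$ with those tangents, and the irreducibility of the Fano variety $F_1(Q_n)$ supplies such families. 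For (i), (ii) gives that $\Dd_{2,n}$ is the closure in $\mathbf{Hilb}(Q_n)$ of the family of disjoint pairs of lines, which is an open subset of the irreducible variety $\mathrm{Sym}^2 F_1(Q_n)$ (using that $F_1(Q_n)$ is irreducible for $n\ge 3$); hence $\Dd_{2,n}$ is irreducible.

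For (iii), we verify $h^1(N_{C|Q_n})=0$ for $C$ outside the bad locus. In the main case of smooth $S = M\cap Q_n$, the transversality yields $N_{S|Q_n}\cong N_{M|\PP^{n+1}}|_S \cong \Oo_S(1)^{\oplus(n-2)}$; since $C$ lies in one ruling of $S\cong \PP^1\times\PP^1$ with $C\cdot C=0$, one has $N_{C|S}\cong \Oo_C$, and the exact sequence
\[
0\to \Oo_C\to N_{C|Q_n}\to \Oo_C(1)^{\oplus(n-2)}\to 0
\]
together with Lemma \ref{u1.2} yields the desired vanishing. The remaining non-bad cases (rank $2$ and rank $3$ for $S$, or $M\subset Q_n$) are handled by analogous normal sheaf computations adapted to the relevant intermediate subvariety containing $C$. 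The dimension of the bad locus is obtained by parametrizing $C$ with $M\cap Q_n=2P$: the plane $P\in \mathrm{OGr}(3,Q_n)$ has dimension $3n-9$; the $3$-plane $M\supset P$ with $M\subset P^\perp$ and $M\not\subset Q_n$ has dimension $n-5$; the reduced line $L=C_{\mathrm{red}}\subset P$ has dimension $2$; and the ribbon structure on $L$ spanning $M$, an open subset of $\PP(\Hom(\Nn_{L|M}^\vee,\Oo_L))\cong \PP^3$, contributes $3$, for a total of $(3n-9)+(n-5)+2+3=4n-9$.

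The main obstacle is the case-by-case normal bundle analysis for (iii) in the non-transverse situations: in the rank-$2$ (pair of planes), rank-$3$ (quadric cone), and $M\subset Q_n$ cases, one must identify the normal bundle of the relevant intermediate subvariety inside $Q_n$ and show that its restriction to $C$ has vanishing $H^1$, which requires more delicate work than the smooth transverse case. Similarly, the direct degeneration argument for (ii) in the rank-$\le 2$ cases requires verifying that the chosen one-parameter family is indeed flat and actually realizes the given ribbon $C$.
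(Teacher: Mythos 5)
Your main-case analysis (smooth or mildly singular $S=M\cap Q_n$, reduction to a smooth quadric fourfold section through $M$ and then to Proposition \ref{u3}(3), and the normal-bundle sequence $0\to N_{C|S}\to N_{C|Q_n}\to (N_{S|Q_n})|_C\to 0$) matches the paper's strategy, and your parameter count $(3n-9)+(n-5)+2+3=4n-9$ for the bad locus is a legitimate (in fact more carefully organized) version of the paper's count $5+(4n-14)$. But there is a genuine gap exactly where you flag "the main obstacle": the double-plane case of (ii). Your proposed "direct degeneration" --- that every ribbon structure on $L$ with prescribed tangent data is the flat limit of a one-parameter family of pairs of lines, supplied by irreducibility of $F_1(Q_n)$ --- is not an argument; it is a restatement of the conclusion. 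When $M\cap Q_n=2P$ is a double plane, the ribbon $C$ is forced to lie inside the non-reduced surface $2P$ (since $\Ii_P^2\subset\Ii_L^2\subset\Ii_C$), and it is precisely these ribbons for which no ambient smooth quadric surface or fourfold is available, so none of the earlier degeneration lemmas apply. The paper closes this case by a deformation-theoretic dimension count: the family of such $C$ has dimension $4n-9$, which is strictly less than $\chi(N_C)=2+4(n-2)=4n-6$, so $[C]$ cannot lie only on this locus in $\mathbf{Hilb}(Q_n)$; since $C$ is a local complete intersection of embedded dimension $2$ everywhere, it deforms to a locally CM curve $C'$ of the same type whose span does not cut a double plane, and $C'$ is already known to be a limit of pairs of disjoint lines. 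Some argument of this kind (or an explicit smoothing inside $2P$) is needed; without it, (ii) and hence your derivation of (i) are incomplete in the critical case.

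Two smaller points. First, for (ii) you only invoke the $Q_4$-reduction when $\op{rank}(S)\ge 3$, but the rank-$2$ case (two distinct planes meeting along a line) also embeds in a smooth quadric fourfold section and should be treated there rather than lumped with the double plane; as written you push an easy case into the unproved "direct degeneration" bucket. Second, your (iii) defers the rank-$2$, rank-$3$ and $M\subset Q_n$ verifications to "analogous normal sheaf computations"; the paper resolves these uniformly by producing a smooth quadric surface $Q_2$ containing $C$ (inside $M$ or inside the auxiliary $Q_4$), getting $h^1(N_{C|Q_2})=0$ and then $h^1(N_C)=0$ from the surjection $N_{C|Q_n}\to (N_{M|Q_n})|_C$ (or its $Q_2$-analogue), using that $C$ is a local complete intersection in $M$. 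You should either carry out these cases or route them through the same smooth-surface trick; the statement of (iii) excludes only the double-plane locus, so every other degenerate configuration must actually be checked.
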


\begin{proof}
Let $M=\langle C \rangle \subset \PP^{n+1}$ denote the linear span of $C\in \Dd_{2,n}$ and then we have $\dim (M)=3$.

First assume $M\subset Q_n$. In this case $C$ may be deformed inside $M$ to a disjoint union of two lines. Since the normal bundle $N_{M|Q_n}$ is globally generated, so we have $h^1(C,\left( N_{M|Q_n}\right)_{|C})=0$. We also have $h^1(N_{C|M}) =0$, because $C$ is contained in a smooth quadric surface contained in $M$. Since $C$ is a locally complete intersection in $M$, the natural map  $N_{C|Q_n} \to \left(N_{M|Q_n}\right)_{|C}$ is surjective. Hence the normal sheaves sequence of the inclusions $C\subset M\subset Q_n$ gives $h^1(N_C)=0$.

Now assume that $M\cap Q_n$ is a quadric surface with singular locus of dimension at most $1$, i.e. $M$ is not a double plane. In this case there is a smooth $4$-dimensional quadric $Q_4$ such that $M\cap Q_n \subset Q_4\subset Q_n$. By part (3) of Proposition \ref{u3}, $C$ is smoothable to a disjoint union of two lines and there is a smooth quadric surface $Q_2\subset Q_4$ such that $C\subset Q_2$. We first get $h^1(N_{C|Q_2}) =0$ and then $h^1(N_C)=0$.

Assume that $M\cap Q_n$ is a double plane. The family of all curves in $\Dd_{2,n}$, which is contained in $M\cap Q_n$, is irreducible and $5$-dimensional by \cite[Corollary 4.3]{hs} with $Z =\{q\}$, $Y= P=L$ and $z=y=p=1$. Since the set of all planes contained in $Q_n$ is an irreducible variety of dimension $2 +4(n-4) =4n-14$, we get that the family of all such curves on double planes is parametrized by an irreducible variety of dimension $5+(4n-14) < 2+4(n-2) = \chi (N_C)$. Since $C$ is a locally complete intersection with embedded dimension $2$ everywhere, so it deforms to another curve $C'$ of embedded dimension at most $2$ everywhere, which is locally a complete intersection and so with no embedded point. In particular $C'$ is either a disjoint union of two lines or a double structure on a line with no embedded point and with embedded dimension $2$ everywhere. In the latter case it is a ribbon because of its embedded dimension $2$ everywhere. Hence we get that $C'\in \Dd _{2,n}$ and we many assume that $C'$ spans a $3$-dimensional linear space whose intersection with $Q_n$ is not a double plane because of dimensional reason. Since $C'$ is a limit of a family of two disjoint lines, so is $C$.  
\end{proof}


\section{Warm-up case of $\chi=1$}

\begin{remark}\label{a1.0}
Let $A\subset \PP^r$, $r\ge 3$, be a locally Cohen-Macaulay scheme of degree $2$ with pure dimension $1$. Then we have $\chi (\Oo _A)\ge 1$ by the upper bound for the genus of locally Cohen-Macaulay curves with degree $2$ in projective space of dimension at least three (e.g. see \cite{h1}). Since $\deg (A)=2$, this may also be proved using that no ribbon of positive genus has a very ample line bundle of degree $2$. Thus we have $\mathbf{Hilb}(2,a)=\emptyset$ for all $a\le 0$.
\end{remark}

\begin{lemma}
We have $\Dd_1 \cong \PP^3$.
\end{lemma}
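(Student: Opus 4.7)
The plan is to identify $\Dd_1$ with the family of lines on $Q$, which is known to be $\cong\PP^3$, using the reduction map $A\mapsto A_{\mathrm{red}}$ and its inverse given by the classical polar plane construction.

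First I would show that each $A\in\Dd_1$ is a planar double line. By the ribbon structure coming from the sequence \eqref{eqb1} with $a=1$, $A\cong T[1]$ and $\Oo_A(1)$ is very ample (Lemma \ref{u1}); from \eqref{eqb1} one has $h^0(\Oo_{T[1]}(1))=3$, so $M:=\langle A\rangle$ is a plane $\PP^2\subset\PP^4$. Since the smooth quadric $Q_3$ contains no plane, $M\not\subset Q$, and $Q\cap M$ is a degree-$2$ plane conic containing $A$; as both are degree-$2$ effective divisors on $M$ they coincide, so $A=Q\cap M$. Because $A$ is non-reduced with reduction a line $L:=A_{\mathrm{red}}$, we conclude that $A=2L$ in $M\cong\PP^2$ and that $L\subset Q$.

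Next I would construct the inverse. Let $B$ be the nondegenerate symmetric bilinear form defining $Q$. For a line $L=\PP(V)\subset Q$ with $V\subset\CC^5$ an isotropic $2$-plane, set $M(L):=\PP(V^{\perp_{B}})$; then $V\subset V^{\perp_{B}}$ by isotropy and $\dim V^{\perp_{B}}=3$ by nondegeneracy, so $M(L)$ is a plane containing $L$. For each $p=[v]\in L$ the tangent hyperplane $T_pQ=\PP(v^{\perp_{B}})$ contains $M(L)$, so every point of $L$ is a singular point of the conic $Q\cap M(L)$ inside $M(L)\cong\PP^2$. A plane conic with a whole line of singularities must be a double line, hence $Q\cap M(L)=2L$. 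Moreover $M(L)$ is the unique plane with this property: any $M'$ with $Q\cap M'=2L$ satisfies $M'\subset T_pQ$ for every $p\in L$, forcing $M'\subset\bigcap_{p\in L}T_pQ=M(L)$, and so $M'=M(L)$.

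Finally this yields a morphism $\psi:\PP^3\cong\mathbf{Hilb}(1,1)\to\Dd_1$ defined by $\psi(L):=Q\cap M(L)=2L$, which is algebraic because the perp operation $V\mapsto V^{\perp_{B}}$ is a linear-algebraic morphism of Grassmannians. The map $\phi:\Dd_1\to\PP^3$, $A\mapsto A_{\mathrm{red}}$, is its inverse: $\phi\circ\psi=\id$ by construction, and $\psi\circ\phi=\id$ follows from the uniqueness of $M(L)$ applied to $\langle A\rangle$. The main (but mild) obstacle is upgrading this bijection to a scheme-theoretic isomorphism; this follows since $\psi$ is a bijective morphism from the smooth variety $\PP^3$ onto $\Dd_1$ (endowed with its reduced induced subscheme structure in $\mathbf{Hilb}(2,1)$), which is therefore an isomorphism.
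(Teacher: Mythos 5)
Your proof is correct, and it turns on the same key fact as the paper's: for a line $L\subset Q$ the intersection $\bigcap_{p\in L}T_pQ$ is a plane (your $M(L)=\PP(V^{\perp_B})$ is exactly the paper's plane $E$), and cutting it with $Q$ gives the unique element of $\Dd_1(L)$. The packaging differs usefully, though. The paper reduces to showing $\Dd_1(L)$ is a single point and verifies the claim by writing $Q$ and $L$ in explicit coordinates and computing $T_pQ\cap T_qQ$; you work coordinate-freely with the bilinear form, which makes the uniqueness of the polar plane transparent (a ternary quadratic form singular along a line has rank one) and, more importantly, exhibits $L\mapsto 2L$ as an actual algebraic family over $\PP^3$, hence a morphism to $\mathbf{Hilb}(2,1)$ --- a point the paper leaves implicit. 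You also prove, rather than assert, that every $A\in\Dd_1$ equals $Q\cap\langle A\rangle$. The one imprecise step is the very last sentence: a bijective morphism from a smooth variety onto a variety need not be an isomorphism (the normalization of a cuspidal cubic is a bijective morphism from $\PP^1$), so smoothness of the source does not suffice; you need either normality of the target or that $A\mapsto A_{\mathrm{red}}$ is itself a morphism. Both are available here --- for instance $\psi$ factors through the closed immersion of the isotropic locus of $\mathrm{Gr}(2,5)$ into $\mathrm{Gr}(3,5)$ via $V\mapsto V^{\perp_B}$, so it is a closed immersion onto its image --- and since the paper explicitly treats $\Dd_1$ only as a set, this is a cosmetic repair rather than a real gap.
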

\begin{proof}
It is enough to check that $\Dd_1(L)$ is a single point space for each line $L\subset Q$. The elements of $\Dd _1$ are the complete intersection of two quadric cones of $Q$ such that their vertices are contained in a line $L\subset Q$. Indeed, fix any line $L\subset Q$ and two distinct points $p,q\in L$. The conic $T_pQ\cap T_qQ\cap Q$ contains $L$ and it has at least two singular points $p$ and $q$. Thus we have $T_pQ\cap T_qQ\cap Q\in \Dd _1$. Using the quadratic form associated to $Q$, we see that the map $u: Q\to (\PP^{4})^{\vee }$ defined by $p\mapsto T_pQ$ is injective with a smooth quadric surface $u(Q)$ as its image. Thus for all $p, q\in L$ with $p\ne q$, we have $T_pQ\ne T_qQ$ and so $T_pQ\cap T_qQ\cap Q\in \Dd _1(L)$. 

\quad {\emph {Claim}}: The set $u(L)$ is a line, i.e. there is a unique plane $E\subset \PP^4$ such that $T_pQ\supset E$ for all $p\in L$.

\quad {\emph {Proof of the Claim}}: Since the set of all lines of $Q$ is homogeneous for the action of the group $\mathrm{Aut}(Q)$, it is sufficient to prove the Claim for a single line. Fix homogeneous coordinates $[x_0:\cdots:x_4]$ such that $Q =\{x_0x_1+x_2x_3+x_4^2=0\}$ and $L =\{x_0 =x_2=x_4=0\}$. For $p =[0:1:0:0:0]$, we have $T_pQ =\{x_0=0\}$. If $q = [0:a:0:1:0]$ for some $a$, then we have $T_qQ = \{ax_0+x_2=0\}$. Thus we may choose $E = \{x_0=x_2=0\}$.  \qed

Since $T_pQ\cap T_qQ =E$ for all $p\ne q\in L$ in the Claim, we get that $\Dd _1(L)$ is a unique point.
\end{proof}

\begin{lemma}\label{a1.1}
We have $\mathbf{Hilb}(2,1) = \mathbf{Hilb}(2,1)_+\cong \mathrm{Gr}(3,5)$, the Grassmannian variety parametrizing projective planes in $\PP^4$.
\end{lemma}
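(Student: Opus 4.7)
The plan is to exhibit mutually inverse morphisms $\varphi :\mathbf{Hilb}(2,1)\to \mathrm{Gr}(3,5)$ and $\psi :\mathrm{Gr}(3,5)\to \mathbf{Hilb}(2,1)$ given by $\varphi (A)=\langle A\rangle \subset \PP^4$ and $\psi (\Pi )=\Pi \cap Q$. First I would verify $\mathbf{Hilb}(2,1)=\mathbf{Hilb}(2,1)_+$: if $A\in \mathbf{Hilb}(2,1)$ had an embedded or isolated $0$-dimensional component, then the pure $1$-dimensional part $A'$ would satisfy $\chi (\Oo _{A'})\ge 1$ by Remark \ref{a1.0} and the extra $0$-dimensional contribution would force $\chi (\Oo _A)\ge 2$, a contradiction. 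Hence every $A\in \mathbf{Hilb}(2,1)$ is a locally Cohen--Macaulay conic in $Q$: a smooth conic, a pair of lines meeting in a point, or a double line in $\Dd _1$.

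Next I would check $h^0(\Oo _A(1))=3$ for each such $A$. Since $\chi (\Oo _A(1))=3$, this reduces to $h^1(\Oo _A(1))=0$, and this holds case by case: for a ribbon $A$ one tensors (\ref{eqb1}) with the degree-$1$ line bundle on $T[1]$ and both outside terms have vanishing $h^1$; for a reducible conic $L_1\cup L_2$ one uses the standard sequence with quotient $\Oo _{L_1\cap L_2}$; and for a smooth conic $\Oo _A(1)=\Oo _{\PP^1}(2)$. Thus $\Pi _A=\langle A\rangle $ is a plane, and $\varphi $ is the classifying morphism of the locally free rank-$2$ sheaf $\pi _*\Ii _{\Cc }(1)$ on $\mathbf{Hilb}(2,1)$, where $\Cc $ is the universal family viewed inside $\mathbf{Hilb}(2,1)\times \PP^4$ via $Q\subset \PP^4$ (cohomology and base change applies by the constancy of $h^i(\Oo _A(1))$).

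For $\psi$ I would use that $Q_3$ contains no $2$-plane---the maximal linear subspaces of a smooth quadric threefold are lines, or equivalently a plane in $Q$ would have class $h/2\in H^2(Q,\ZZ)=\ZZ\langle h\rangle $, impossible. Hence every $\Pi \in \mathrm{Gr}(3,5)$ meets $Q$ properly in a degree-$2$ divisor of $\Pi \cong \PP^2$, and the sequence
\[
0\to \Oo _\Pi (-2)\to \Oo _\Pi \to \Oo _{\Pi \cap Q}\to 0
\]
gives $\chi (\Oo _{\Pi \cap Q})=1$. Intersecting the universal plane in $\mathrm{Gr}(3,5)\times \PP^4$ with $\mathrm{Gr}(3,5)\times Q$ produces a flat family over $\mathrm{Gr}(3,5)$ with Hilbert polynomial $2t+1$, so the universal property of the Hilbert scheme yields $\psi $.

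It remains to check $\varphi \circ \psi =\mathrm{id}$ and $\psi \circ \varphi =\mathrm{id}$. Set-theoretically this reduces to $\langle \Pi \cap Q\rangle =\Pi$, which holds because no linear form on $\Pi $ lies in the ideal of $\Pi \cap Q$ (even in the double-line case $\Pi \cap Q=2L$, the ideal $(\ell ^2)$ does not contain $\ell $). Since $A=\Pi _A\cap Q$ is a complete intersection in $Q$ of the two linear equations defining $\Pi _A$, the normal bundle is $N_{A|Q}\cong \Oo _A(1)^{\oplus 2}$, and $\dim T_{[A]}\mathbf{Hilb}(2,1)=h^0(N_{A|Q})=6=\dim \mathrm{Gr}(3,5)$ everywhere; combined with the bijection $\psi $ from a smooth $6$-dimensional variety, this forces $\mathbf{Hilb}(2,1)$ to be smooth, hence reduced, so the set-theoretic equalities upgrade to scheme-theoretic ones. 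The main subtlety is the tangential case $A\in \Dd _1$, where $\Pi _A$ is tangent to $Q$ along $A_{\mathrm{red}}$; but the identifications $\Pi _A\cap Q=A$ and $N_{A|Q}\cong \Oo _A(1)^{\oplus 2}$ still hold because $A$ is cut out of $Q$ by the two linear forms defining $\Pi _A$ regardless of tangency.
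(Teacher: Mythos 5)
Your proposal is correct and follows essentially the same route as the paper: both identify a curve in $\mathbf{Hilb}(2,1)$ with its linear span via $h^0(\Oo_A(1))=3$ and use that $Q$ contains no plane, both rule out embedded points via Remark \ref{a1.0}, and both obtain smoothness from the complete-intersection normal bundle $N_{A|Q}\cong \Oo_A(1)^{\oplus 2}$ with $h^0=6$ and $h^1=0$. The only difference is that you spell out the construction of the two classifying morphisms via universal families and base change, which the paper leaves implicit.
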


\begin{proof}
Recall that $Q$ contains no plane and so the curve $C:= M\cap Q$ for a plane $M$ is an element
of $\mathbf{Hilb}(2,1)_+$. Conversely, for a fixed $[C]\in \mathbf{Hilb}(2,1)_+$, either $C$ is a reduced conic or $C\in \Dd _1$. In the latter case $C$ fits into the exact sequence
$$0\to \Oo_L(-1) \to \Oo_C \to \Oo_L \to 0$$
with $L:=C_{\mathrm{red}}$ and so $h^0(\Oo _C(1))=3$. Thus in either case $C$ is contained in a plane $\langle C \rangle \cong \PP^2$ and so we have $C=\langle C \rangle \cap Q$ as schemes. Thus we have $(\mathbf{Hilb}(2,1)_+)_{\mathrm{red}} \cong \mathrm{Gr}(3,5)$. Now we check that
$\mathbf{Hilb}(2,1) = \mathbf{Hilb}(2,1)_+$. Fix $[C]\in \mathbf{Hilb}(2,1)$ and let $D\subseteq C$ be the maximal locally Cohen-Macaulay subcurve of $C$; the ideal sheaf $\Ii _{D,C}$ of $D$ in $C$ is the intersection of the non-embedded components of a primary decomposition of the Noetherian sheaf
$\Oo _C$. Let $a$ be the degree of the kernel of the quotient map $\Oo _C\to \Oo _D$. If $D\ne C$, i.e. $[C]\notin  \mathbf{Hilb}(2,1)_+$, then we have $a>0$ and so $D\in  \mathbf{Hilb}(2,1-a)_+$. Thus we get $\mathbf{Hilb}(2,1)_{\mathrm{red}}=\mathbf{Hilb}(2,1)_{+, \mathrm{red}}\cong \mathrm{Gr}(3,5)$ by Remark \ref{a1.0}. Now for each conic $C\subset Q$, we get the locally free resolution of $\Ii_C$,
$$0\to \Oo_Q(-2) \to \Oo_Q(-1)^{\oplus 2} \to \Ii_C \to 0.$$
Tensoring it with $\Oo_C$, we get $N_{C|Q} \cong \Oo_C(1)^{\oplus 2}$. In particular we have $h^0(N_{C|Q})=6$ and $h^1(N_{C|Q})=0$. Hence we have $\mathbf{Hilb}(2,1) \cong \mathbf{Hilb}(2,1)_{\mathrm{red}}$ and the assertion follows. 
\end{proof}

\begin{remark}
Let $\GG_i$ for $i=1,2,3$ be the subvariety of $\mathrm{Gr}(3,5)$ parametrizing conics on $Q$ of rank at most $i$. In particular, we have $\GG_1 \cong \PP^3$ and $\GG_3=\mathrm{Gr}(3,5)$. To each point $p\in Q$ we may associate the projective plane formed by the planes contained in $T_pQ$ and containing $p$. Thus $\GG_2$ is a hypersurface and so it is an element of $|\Oo_{\mathrm{Gr}(3,5)}(a)|$ for some $a>0$. Fix a general hyperplane $H\subset \PP^4$ and set $Q_1:= H\cap Q$ a smooth quadric surface. Let $M \subset \mathrm{Gr}(3,5)$ be the set of all planes contained in $H$ and then $\GG_2\cap M$ is the set of all planes in $H$ which are tangent to $Q_1$, i.e. the dual variety $Q_1^\vee$ of $Q_1$, as a hypersurface of $H$. Since $Q_1^\vee$ is a smooth quadric surface, so we get $a=2$. $\GG_2$ is homogeneous for the action of $\mathrm{Aut} (Q)$, because this group acts transitively on the set of all reducible conics; it acts transitively on the set of singular points of conics and if $D_1, D_2$ are reducible conics with common vertex point $p$, then they correspond to pairs of different lines of the surface quadric cone $T_pQ\cap Q$.
\end{remark}

\begin{proposition}\label{a1}
We have 
$$\mathbf{M}(2,1) \cong \mathbf{M}^{\alpha} (2,1) \cong \mathbf{Hilb}(2,1) \cong \mathrm{Gr}(3,5)$$ 
for all $\alpha>0$ and they are fine moduli spaces. 
\end{proposition}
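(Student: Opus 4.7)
The plan is to construct a morphism $\Phi:\mathbf{Hilb}(2,1)\to \mathbf{M}(2,1)$ by $[C]\mapsto [\Oo_C]$, show it is a bijection via the identity $h^0(\Oo_C)=\chi(\Oo_C)=1$, upgrade to a scheme isomorphism using fineness of $\mathbf{M}(2,1)$, and then deduce the $\alpha$-stable statement from the fact that $\Oo_C$ admits a unique section up to scalar and no destabilizing subsheaf contains it.

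For the stability of $\Oo_C$, I would examine any proper subsheaf $\Ff'\subsetneq \Oo_C$, necessarily an ideal $\Ii_{Z,C}$ for some closed subscheme $Z\subset C$. If $\mu(\Ff')=2$, then $Z$ is $0$-dimensional and $\chi(\Ff')=1-\length(Z)\le 0$, so $p(\Ff')\le 0<1/2$. If $\mu(\Ff')=1$, then $C$ is singular and $\Ff'$ is supported on a single reduced component $L$; purity forces $\Ff'$ to embed into $\Oo_L(-1)$, which is realized either as the ideal $\Ii_{L',C}\cong \Oo_L(-q)$ of the complementary component for a reducible $C=L\cup L'$ meeting at $q$, or as the nilpotent ideal $\Ii_{L,C}$ from the canonical sequence in the proof of Lemma~\ref{a1.1} for $C\in \Dd_1$. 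In either case $\chi(\Ff')\le 0$ and $p(\Ff')\le 0<1/2$, so $\Oo_C$ is stable. Conversely, for any $\Ff\in \mathbf{M}(2,1)$ we have $h^0(\Ff)\ge \chi(\Ff)=1$, so a nonzero section $\Oo_Q\to \Ff$ has image $\Oo_Z$ for some subscheme $Z\subset Q$; $Z$ is $1$-dimensional and locally Cohen-Macaulay by purity of $\Ff$, and semistability forbids $Z$ from being a line (otherwise $p(\Oo_Z)=1>1/2$), forcing $Z$ to be a conic with $\chi(\Oo_Z)=1$ by Remark~\ref{a1.0}. Comparing Hilbert polynomials then shows $\Oo_Z\hookrightarrow \Ff$ is an isomorphism.

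For the scheme-theoretic upgrade: the flat family $\Oo_{\mathcal{C}}$ over the universal conic $\mathcal{C}\subset \mathbf{Hilb}(2,1)\times Q$ provides the morphism $\Phi$, and since $\End(\Oo_C)=H^0(\Oo_C)=\CC$, every stable sheaf is simple, $\mathbf{M}(2,1)$ is a fine moduli space, and the inverse of $\Phi$ is induced by the Fitting support of a universal stable sheaf. For $\alpha$-stability, uniqueness of $s:\Oo_Q\to \Oo_C$ up to scalar and its surjectivity imply that $s$ factors through a proper subsheaf $\Ff'\subsetneq \Oo_C$ only if $\Ff'=\Oo_C$ (impossible); hence $\delta=0$ in He's inequality for every proper $\Ff'$, reducing $\alpha$-stability to the strict bound $p(\Ff')<(1+\alpha)/2$, which is weaker than the Gieseker stability already established. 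Thus there are no walls, $\mathbf{M}^\alpha(2,1)$ is independent of $\alpha>0$ and canonically isomorphic to $\mathbf{Hilb}(2,1)$; triviality of pair automorphisms (any $\phi=\lambda\cdot\id$ fixing $s$ forces $\lambda=1$) yields fineness. The main technical obstacle is the $\mu(\Ff')=1$ case in the stability check when $C\in \Dd_1$, resolved cleanly by identifying $\Ii_{L,C}\cong \Oo_L(-1)$ via Lemma~\ref{a1.1}.
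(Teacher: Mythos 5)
Your classification of stable sheaves and your stability check for $\Oo_C$ (including the delicate multiplicity-one case, where a rank-one subsheaf must land in $\Ii_{L,C}\cong\Oo_L(-1)$ because the generic stalk of $\Oo_C$ has a unique length-one submodule) are correct and in fact more carefully argued than the paper's, which essentially asserts stability after computing normal bundles. Your route to the scheme isomorphism also genuinely differs from the paper's: the paper proves bijectivity and then checks smoothness of both sides ($\Ext^1_Q(\Oo_C,\Oo_C)\cong H^0(N_{C|Q})$ of dimension $6$, and $h^1(N_{C|Q})=0$), whereas you build an explicit inverse from the universal sheaf. That inverse is viable, but not via the Fitting support as stated: flatness of the Fitting-ideal family over a possibly non-reduced $\mathbf{M}(2,1)$ is not automatic. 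The correct version is the one the paper gestures at in its last paragraph: since $h^0(\Oo_C)=1$, $h^1(\Oo_C)=0$ and $\Oo_C$ is generated by $1$, the pushforward of the universal sheaf to $\mathbf{M}(2,1)$ is a line bundle, and after twisting by its inverse the evaluation map $\Oo_{Q\times\mathbf{M}}\to\mathcal{U}'$ is surjective with flat image $\Oo_{\mathcal Z}$, giving the classifying map to the Hilbert scheme. Relatedly, fineness does not follow from simplicity of the stable sheaves alone; you need the coprimality criterion $\gcd(2,1)=1$ (the paper cites \cite[Corollary 4.6.6]{HL}), and you do need fineness before you can even speak of the universal sheaf used in your inverse.

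The one genuine gap is in the $\alpha$-stability step. You prove that every pair $(s,\Oo_C)$ with $s\ne 0$ is $\alpha$-stable for all $\alpha>0$ (correct: $\delta=0$ for every proper subsheaf, and the inequality is implied by Gieseker stability). But this only shows that these pairs lie in $\mathbf{M}^{\alpha}(2,1)$ for every $\alpha$; it does not show they exhaust it. For $\alpha$ below the first wall an $\alpha$-semistable pair has semistable underlying sheaf, so your sheaf classification applies, but for large $\alpha$ an $\alpha$-semistable pair may a priori have an unstable $\Ff$ (this is exactly what happens in the PT chamber in general), so ``no walls'' does not follow from what you proved. The paper closes this by showing that no strictly $\alpha$-semistable pair exists at any $\alpha$ (the putative destabilizing subpair $\Oo_{L'}(c-1)$ forces $c\ge 1$ if $\delta=1$ and $c\le 0$ if $\delta=0$, both impossible), whence by \cite[Theorem 4.2]{He} there are no critical values and all $\mathbf{M}^{\alpha}(2,1)$ coincide. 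Alternatively you could classify $\alpha$-semistable pairs directly for arbitrary $\alpha$: the image of the section is $\Oo_Z$ with $Z$ a pure curve; if $\deg Z=1$ then the subpair $(s,\Oo_Z)$ has $\delta=1$ and violates $\frac{1+\alpha}{1}\le\frac{1+\alpha}{2}$ for every $\alpha>0$, so $\deg Z=2$, and then $1\le\chi(\Oo_Z)\le\chi(\Ff)=1$ forces $\Ff=\Oo_Z$. Either supplement is short, but one of them is needed.
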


\begin{proof}
The fineness of $\mathbf{M}(2,1)$ comes from \cite[Corollary 4.6.6]{HL} with $(a_0,a_1)=(1,2)$. Let $(1,\Ff)\in \mathbf{M}^{\alpha}(2,1)$ be a strictly $\alpha$-semistable pair and so it has a subpair $(s,\Ff')$ with $\chi_{\Ff'}(t)=t+c$ such that $c+\delta \cdot \alpha=\frac{1+\alpha}{2}$. In particular we have $\Ff' \cong \Oo_{L'}(c-1)$ for a line $L'\subset Q$. If $\delta=1$, then we have $2c+\alpha=1$. But since $\Ff'$ has a non-zero section, so we have $c\ge 1$, a contradiction. If $\delta=0$, then we have $c=\frac{1+\alpha}{2}$. But the quotient pair $(1,\Ff'')$ has $\chi_{\Ff''}(t)=t+1-c$. Since $\Ff'' \cong \Oo_{L''}$(-c) with a line $L''$, so we have $c\le 0$, a contradiction. Thus there is no wall-crossing among $\mathbf{M}^{\alpha}(2,1)$'s and so we have
$$\mathbf{M}^{\infty}(2,1) \cong \mathbf{M}^{\alpha}(2,1) \cong \mathbf{M}^{0+}(2,1)$$
for all $\alpha>0$. Note that a curve $C$ in $\mathbf{Hilb}(2,1)$ is a conic, including a planar double line. For such a curve $C$, we have that $\Oo_C$ is stable with $\chi(\Oo_C)=1$ and $h^0(\Oo_C)=1$. Note that $C$ is either
\begin{itemize}
\item a smooth conic with ${TQ}_{|C} \cong \Oo_{\PP^1}(2)^{\oplus 3}$ and so $N_{C|Q} \cong \Oo_{\PP^1}(2)^{\oplus 2}$, or
\item a reducible conic, say $C=L_1 \cup L_2$ (possibly $L_1=L_2$), with $N_C \cong \Oo_C(1)^{\oplus 2}$
\end{itemize}
and so it is evident that $\Oo_C$ is stable. Conversely, for $\Ff \in \mathbf{M}(2,1)$, the curve $C=C_{\Ff}$ has degree at most $2$. If $\deg (C)=1$, say $C=L$ a line, then $\Ff$ is a vector bundle of rank $2$ on $L$, i.e. $\Ff \cong \Oo_L(a_1)\oplus \Oo_L(a_2)$ with $a_1+a_2=-1$ and $a_1>a_2$. The subbundle $\Oo_L(a_1)$ destabilize $\Ff$ and so we have $\deg (C)=2$. Since $\chi(\Ff)=1$, so we have $h^0(\Ff)>0$ and a non-zero section induces an injection $u:\Oo_D \hookrightarrow \Ff$ for a subcurve $D\subset C$. In the case of $D=C$, we have $[D]\in \mathbf{Hilb}(2,1)$ by Remark \ref{a1.0} and $\mathrm{coker}(u)=0$. If $D\subsetneq C$, then $D=L$ is a line. Since $\chi_{\Oo_L}(t)=t+1$, so it contradicts to the semistability of $\Ff$. Thus the forgetful morphism $\phi : \mathbf{M}^{\infty}(2,1) \to \mathbf{M}(2,1)$ is bijective. To confirm that it is an isomorphism, it is enough to check the smoothness of $\mathbf{M}(2,1)$. Note that $\dim \Hom_Q(\Oo_C, \Oo_C)=1$, since $C$ is connected. Apply $\Ext^{\bullet}_Q(-, \Oo_C)$ to the standard sequence for $C\subset Q$, we get
\begin{align*}
0&\to \Hom_Q(\Oo_C, \Oo_C) \to \Hom_Q(\Oo_Q, \Oo_C) \to \Hom_Q(\Ii_C, \Oo_C)\\
&\to \Ext^1_Q(\Oo_C, \Oo_C) \to \Ext^1_Q(\Oo_Q, \Oo_C).
\end{align*}
Since $\Ext^1_Q(\Oo_Q, \Oo_C)\cong H^1(\Oo_C)=0$, so we get $\Ext^1_Q(\Oo_C, \Oo_C) \cong \Hom_Q(\Ii_C, \Oo_C)$. But we have $\Hom_Q(\Ii_C, \Oo_C) \cong \Hom_C (\Ii_C \otimes \Oo_C, \Oo_C) \cong H^0(N_{C|Q})$ and so its dimension is $6$. In particular, $\mathbf{M}(2,1)$ is smooth.

The morphism from $\mathbf{M}^{\infty}(2,1) \to \mathbf{Hilb}(2,1)$ is bijective and so it is an isomorphism since $\mathbf{Hilb}(2,1)$ is smooth. In particular, $\mathbf{M}^{\infty}(2,1)$ is smooth. 

Now let $\mathcal{U}$ be the universal sheaf on $Q\times \mathbf{M}(2,1)$ and then $f_{1*}(\mathcal{U})$ is a line bundle on $Q$, where $f_1$ is the projection to $Q$. If $g_1: Q \times \mathbf{M}^{\infty}(2,1) \to Q$ is the projection, then the pair $(g_1^*f_{1*} (\mathcal{U}), \phi^*\mathcal{U})$ is the universal coherent system. 
\end{proof}


\section{Case of $\chi=2$}

\begin{lemma}\label{a2}
$\Dd_2$ is an irreducible and non-empty subset of $\mathbf{Hilb}(2,2)$ with $\dim (\Dd _2) = 5$. For any $C\in \Dd_2$, we have the following:
\begin{itemize}
\item[(i)] $C$ is contained in a smooth hyperplane section $Q_2$.
\item[(ii)] $h^1(N_C)=0$ and $h^1(\Ii _C(t)) =0$ for all $t>0$. In particular, $C$ is a smooth point of $\mathbf{Hilb}(2,2)$.
\item[(iii)] $C$ is a flat limit of a family of disjoint unions of two lines.
\end{itemize}
\end{lemma}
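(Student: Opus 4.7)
The plan is to prove part (i) first, because once $C$ is placed on a smooth quadric surface $S\cong \PP^1\times \PP^1$, everything else reduces to standard computations on $S$.

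Since $h^0(\Oo_{T[2]}(1))=4$ by Lemma~\ref{u1.2} and $\Oo_{T[2]}(1)$ is very ample by Lemma~\ref{u1}, the ribbon $C$ is embedded by the complete linear system into its span, which is a $\PP^3\subset \PP^4$. Thus $C$ is contained in a unique hyperplane $H=\langle C\rangle$ and the restriction map $H^0(\Oo_Q(1))\to H^0(\Oo_C(1))$ is surjective, giving $h^0(\Ii_C(1))=1$ and $h^1(\Ii_C(1))=0$. To prove (i) I must show that $H$ is not tangent to $Q$. The Gauss map embeds $Q$ as a smooth quadric $Q^{\vee}\subset (\PP^4)^{\vee}$, so every hyperplane section of $Q$ has rank $4$ (smooth) or rank $3$ (a quadric cone with an isolated vertex). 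Assume by contradiction that $H=T_pQ$, so $Q_2:=H\cap Q$ is a cone and $L:=C_{\mathrm{red}}$ passes through its vertex $p$. The relation $xy=z^2$ for the cone forces $y\cdot \bar{x}\in \Ii_L^2$ in the local ring at $p$, so the conormal sheaf $\Ii_L/\Ii_L^2$ on $L\cong \PP^1$ carries a torsion summand supported at $p$. A direct count $\chi(\Ii_L/\Ii_L^2)=\chi(\Oo_{2L})-\chi(\Oo_L)=2-1=1$ then pins down the torsion subsheaf as $\Oo_p$ and the torsion-free quotient as $\Oo_L(-1)$. Any surjection onto a line bundle $E$ annihilates the torsion and factors through $\Oo_L(-1)$, forcing $E\cong \Oo_L(-1)$; the associated ribbon $C'$ would then satisfy $\chi(\Oo_{C'})=\chi(\Oo_L(-1))+\chi(\Oo_L)=1$, contradicting $\chi(\Oo_C)=2$. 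Therefore $H$ is not tangent and $Q_2$ is smooth.

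Granting (i), each $C\in \Dd_2$ lies on a unique smooth $S:=H\cap Q$, where it is the Cartier divisor $2L$ on the ruling line $L=C_{\mathrm{red}}$, since a smooth surface admits exactly one locally Cohen--Macaulay double structure on a Cartier line. This realizes $\Dd_2$ as the image of the irreducible variety $\Xx=\{(L,H) : L\subset Q \text{ line},\ H\supset L \text{ with } H\cap Q \text{ smooth}\}$, an open subset of the $\PP^2$-bundle over the $\PP^3$ of lines in $Q$ (hyperplanes through $L$ form a $\PP^2$, and the smoothness condition excludes a lower-dimensional locus), whence $\dim \Xx=5$. The map $(L,H)\mapsto 2L\subset S$ is injective, since one recovers $L=C_{\mathrm{red}}$ and $H=\langle C\rangle$ from $C$, so $\Dd_2$ is irreducible of dimension $5$.

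For (ii) I combine two short exact sequences. The sequence $0\to \Oo_Q(-1)\to \Ii_C\to \Ii_{C/S}\to 0$ together with $\Ii_{C/S}\cong \Oo_S(-2,0)$ yields $h^1(\Ii_C(t))=0$ for $t\ge 1$: $H^1(\Oo_Q(t-1))=0$, and $H^1(\Oo_S(t-2,t))=0$ by K\"unneth on $\PP^1\times \PP^1$. The normal bundle sequence $0\to N_{C/S}\to N_C\to N_{S/Q}|_C\to 0$ has $N_{C/S}=\Oo_S(2,0)|_C$ of degree $0$ on $L$, so by the classification of line bundles on $T[2]$ preceding Lemma~\ref{u1} it is $\cong \Oo_C$ with $h^1(\Oo_C)=0$ (Lemma~\ref{u1.2}); and $N_{S/Q}|_C\cong \Oo_S(1)|_C\cong \Oo_{T[2]}(1)$ has $h^1=0$ by Lemma~\ref{u1.2}. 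Hence $h^1(N_C)=0$ and $[C]$ is a smooth point of $\mathbf{Hilb}(2,2)$. Part (iii) is immediate: the linear system $|\Oo_S(2,0)|\cong \PP^2$ contains $C=2L$, and its generic member is a disjoint union of two fibers of the ruling, providing the required smoothing inside $S\subset Q$.

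The crux of the argument is the cone analysis in (i): correctly identifying the torsion summand in $\Ii_L/\Ii_L^2$ at the vertex and pinning down the Euler characteristic of the torsion-free quotient is what forces the contradiction and rules out $\Dd_2$-ribbons in tangent hyperplane sections.
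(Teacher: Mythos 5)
Your proof is correct, and while the overall skeleton matches the paper's (show $C$ spans a hyperplane $H$, rule out $H$ tangent to $Q$, then read everything off the smooth quadric surface $S=H\cap Q$ where $C=2L\in |\Oo_S(2,0)|$), your argument for the crucial Claim --- that no $\chi=2$ ribbon on a line lies in a quadric cone --- is genuinely different. The paper first invokes the uniqueness of the embedded ribbon $E\subset\PP^3$ with $\chi(\Oo_E)=2$ up to projective equivalence to place $E$ on a smooth quadric, and then derives a contradiction by comparing Zariski tangent planes along $L$: they are constant along a ruling of a cone but vary along a ruling of a smooth quadric. You instead work directly with the Ferrand datum: the conormal sheaf $\Ii_L/\Ii_L^2$ of a ruling $L$ of the cone $\{xy=z^2\}$ is $\CC_v\oplus\Oo_L(-1)$ (torsion $\CC_v$ at the vertex since $xy=z^2\in\Ii_L^2$ kills $\bar y$, and the Euler characteristic count $\chi(\Ii_L/\Ii_L^2)=\chi(\Oo_{Q_2}/\Ii_L^2)-\chi(\Oo_L)=2-1=1$ forces the torsion-free quotient to be $\Oo_L(-1)$), so any locally CM double structure on $L$ inside the cone has Ferrand quotient $\Oo_L(-1)$ and hence $\chi=1$. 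This is more self-contained (no appeal to projective uniqueness of the abstract embedding) and also explains structurally why $\chi=1$ is the only value achievable on a cone; the paper's tangent-plane argument is softer but needs the uniqueness statement as scaffolding. Your treatment of (ii) and (iii) via the two standard exact sequences on $S\cong\PP^1\times\PP^1$ is what the paper leaves implicit, and your parametrization of $\Dd_2$ by pairs $(L,H)$ matches the paper's dimension and irreducibility count. The only steps you assert rather than prove are the value $\chi(\Oo_{Q_2}/\Ii_L^2)=2$ (a correct monomial computation for the ideal $(xy,y^2,yz,z^2)$) and the small typo $y\cdot\bar x$ for $x\cdot\bar y$; neither affects correctness.
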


\begin{proof}
Note that for each smooth hyperplane section $Q_2\subset Q$ and any line $L\subset Q_2$, the effective Cartier divisor $2L$ of $Q_2$ is a split ribbon with the prescribed $\chi (\Oo _C)$ and so it is an element of $\Dd _2$. In particular we have $\Dd_2 \ne \emptyset$. Conversely fix any $C\in \Dd _2$ and then it fits into the exact sequence
\begin{equation}\label{equuu1}
0\to \Oo_L \to \Oo_C \to \Oo_L\to 0\end{equation}
with $L:=C_{\mathrm{red}}$. Since $h^0(\Oo _C(1)) =4$, so $C$ is contained in the scheme-theoretic intersection $M\cap Q$, where $M\subset \PP^4$ is a hyperplane. Note that the hyperplane $M$ is unique, because it is the linear span $\langle C \rangle$ of $C$ in $\PP^4$. 

\quad {\emph {Claim }}: $M$ is not a tangent hyperplane of $Q$, i.e. $M\cap Q$ is a smooth quadric surface.

\quad {\emph {Proof of Claim }}: Let $A$ be the split ribbon with $\chi (\Oo _A)=2$. The unique line bundle $\Ll$ of degree $2$ on $A$ is very ample and $h^0(\Ll )=4$. Therefore, up to projective transformations, there is a unique ribbon $E\subset \PP^3$ with $\chi (\Oo _E)=2$. Set $L:= E_{\mathrm{red}}$. Any smooth quadric surface containing $L$ also contains an embedding $E$ of $A$ with $L$ as its support. By the uniqueness of the embedding of $A$ up to a projective transformation, we get $h^1(\PP^3,\Ii _E(2)) =0$, $h^0(\PP^3, \Ii_E(2))=4$ and that a general quadric surface containing $E$ is smooth. $E$ is contained in all quadric surfaces which are the union of two planes through $L$. We need to prove that $E$ is not contained in an integral quadric cone. Assume $E\subset U$ with $U$ a quadric cone and call $o$ the vertex of $U$. Then we have $o\in L$. Since $U$ is a quadric cone, so for two points $p, p'\in L\setminus \{o\}$ the planes $T_pU$ and $T_{p'}U$ are the same. Since $E$ has planar singularities, we get that $T_pU$ is the Zariski tangent plane to $E$ at $o$. Fix a smooth quadric surface $Q_2\supset E$. Since $E$ has planar singularities, we get $T_pQ = T_pE$ for all $p\in L$. The tangent planes $T_pQ_2$ with $p\in L\setminus \{o\}$, have the property that $T_pQ_2\cap Q = L\cup L_p$, where $L_p$ is the unique line of $Q_2$ intersecting $L$ and containing $p$. Hence $p\ne p'$ would imply $T_pQ_2\ne T_{p'}Q_2$, a contradiction. \qed

By Claim, $C$ is contained in a smooth quadric surface and so we get $h^1(N_C)=0$. Thus $[C]$ is a smooth point of $\mathbf{Hilb}(2,2)$. Similarly we get $h^1(\Ii _C(t)) =0$ for all $t>0$. We also see that the algebraic set $\Dd _2$ is equidimensional of dimension $5$ and that it has one or two irreducible components, because each smooth quadric surface has exactly two rulings. The set of all lines in $Q$ is isomorphic to $\PP^3$. In particular it is irreducible and of dimension $3$. For each line $L\subset Q$, the set of all hyperplanes $M\subset \PP^4$ containing $L$ such that $M\cap Q$ is smooth is a non-empty open subset of $\PP^2$. Thus it is irreducible of dimension $2$ and so $\Dd _2$ is irreducible.
\end{proof}

\begin{remark}\label{rem443}
Note that the set of hyperplanes in $\PP^4$ whose intersection with $Q$ is a smooth quadric surface, is parametrized by ${\PP^4}^\vee \setminus Q^\vee$, the dual space of $\PP^4$ minus the dual of $Q$. The quadratic form associated to $Q$ induces an isomorphism between $Q$ and it dual $Q^\vee$, which associates $T_pQ$ to each $p\in Q$; under the isomorphism, a line $L\subset Q$ is mapped to a line in $Q^\vee$, not just a rational curve. In particular, we have $\Dd_2(L) \cong \PP^2 \setminus \PP^1$. 
\end{remark}

\begin{proposition}\label{a3}
$\mathbf{Hilb}(2,2)_+$ is smooth, irreducible and of dimension $6$.
\end{proposition}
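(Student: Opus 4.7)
The plan is to classify the curves in $\mathbf{Hilb}(2,2)_+$ into two strata, verify point-wise smoothness via a normal bundle computation on each stratum, and then use the flat-limit statement in Lemma~\ref{a2}(iii) to glue the strata into a single irreducible variety of the right dimension.

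First I would show that $\mathbf{Hilb}(2,2)_+ = \mathcal{U}\cup \Dd_2$, where $\mathcal{U}$ is the locus of disjoint unions of two lines. Fix $C\in \mathbf{Hilb}(2,2)_+$. Since $C$ has degree $2$, its reduction $C_{\mathrm{red}}$ is either a single line or a union of two lines. If $C_{\mathrm{red}} = L_1\cup L_2$ with $L_1\ne L_2$, then the locally CM condition forces $C = L_1\cup L_2$ as schemes, and a standard Mayer--Vietoris gives $\chi(\Oo_C) = 2 - \#(L_1\cap L_2)$; so $\chi(\Oo_C)=2$ forces $L_1\cap L_2 = \emptyset$. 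If $C_{\mathrm{red}}$ is a single line, then $C$ is a locally CM double structure on that line with $\chi=2$, hence $C\in \Dd_2$.

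Next I would compute dimension and smoothness on each stratum. For $C=L_1\sqcup L_2\in \mathcal{U}$, the splitting $N_{C|Q} = N_{L_1|Q}\oplus N_{L_2|Q}$ together with the isomorphism $N_{L|Q}\cong \Oo_L(1)\oplus \Oo_L$ from the Remark following Definition~2.4 gives $h^0(N_{C|Q})=6$ and $h^1(N_{C|Q})=0$, so $\mathbf{Hilb}(2,2)_+$ is smooth of dimension $6$ at each point of $\mathcal{U}$. Since the family of lines on $Q$ is isomorphic to $\PP^3$, the locus $\mathcal{U}$ is realized as an open subset of $\mathrm{Sym}^2(\PP^3)$ (the open complement of the diagonal and of the closed locus of incident pairs), hence is irreducible of dimension $6$. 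For $C\in \Dd_2$, Lemma~\ref{a2}(ii) already gives $h^1(N_C)=0$, so such $C$ is also a smooth point of the Hilbert scheme; by Lemma~\ref{a2}, $\Dd_2$ is irreducible of dimension $5$.

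Finally I would establish irreducibility. By Lemma~\ref{a2}(iii) each $C\in \Dd_2$ is a flat limit of a family of disjoint pairs of lines in $Q$, so $\Dd_2\subseteq \overline{\mathcal{U}}$ inside $\mathbf{Hilb}(2,2)_+$. Therefore $\mathbf{Hilb}(2,2)_+ = \overline{\mathcal{U}}$ is irreducible of dimension $6$, and the point-wise computations above make it smooth at every closed point. There is no serious obstacle here; the only step that requires care is the classification in the first paragraph, where one must use that locally CM with no isolated points excludes embedded points and forces the scheme-theoretic equality $C=L_1\cup L_2$ when the reduction is reducible, so that the Euler-characteristic bookkeeping correctly rules out incident pairs of lines.
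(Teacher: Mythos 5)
Your proposal is correct and follows essentially the same route as the paper: the same dichotomy between disjoint pairs of lines and elements of $\Dd_2$, the same normal-bundle computations for smoothness, and the same appeal to Lemma \ref{a2}(iii) to place $\Dd_2$ in the closure of the $6$-dimensional locus of disjoint line pairs. The paper's proof is just a terser version of yours, leaving the classification and the Euler-characteristic bookkeeping implicit.
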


\begin{proof}
Take $[C]\in \mathbf{Hilb}(2,2)_+$. Then we have that either $C$ is the disjoint union of two lines or $C\in \Dd _2$. If $C$ is the disjoint union of two lines, then we have $h^1(N_C)=0$ and so the set of
all such curves is irreducible of dimension $6$. Now the assertion follows from Lemma \ref{a2}.
\end{proof}

\begin{remark}\label{rem33}
The scheme $\mathbf{Hilb}(2,2)_+$ is not closed in $\mathbf{Hilb}(2,2)$ and in particular it is not complete. Let $T\subset Q$ be a surface quadric cone with the vertex $o$. Fix two distinct lines $L, R\subset T$  and so we have $L\cap R=\{o\}$. Let $\Delta$ be an integral affine curve with a fixed point $q\in \Delta$ and $\{R_p\}_{p\in \Delta}$ be a family of lines of $Q$ with $R_q =R$ and $L \cap R_p =\emptyset $ for all $p\in \Delta \setminus \{q\}$. The family $\{L\cup R_p\}_{p\in \Delta \setminus \{q\}}$ has a flat limit $B\in \mathbf{Hilb}(2,2)$ containing $L \cup R$, but with an embedded point $o$. Note that $B$ is not contained in $T$. 
\end{remark}

Fix $[C]\in  \mathbf{Hilb}(2,2)\setminus  \mathbf{Hilb}(2,2)_+$. Let $D\in \mathbf{Hilb}(2,c)_+$ be the maximal locally Cohen-Macaulay subscheme of $C$ with pure dimension $1$. Let $a$ be the degree of the kernel of the surjection $\phi : \Oo _C\to \Oo _D$ and then we have $c=2-a$. Remark \ref{a1.0} gives $a=1$ and $c=1$. Thus the sheaf $\mathrm{ker}(\phi)$ is the structural sheaf $\CC _p$ of a unique point $p\in Q$ and we have a map 
\begin{equation}\label{chow}
\psi : \mathbf{Hilb}(2,2) \to \mathbf{Hilb}(2,2)_+ \sqcup \left (\mathbf{Hilb}(2,1) \times Q \right).
\end{equation}
For $(D,p)\in \mathbf{Hilb}(2,1)_+ \times Q$, let us define $\Aa (D,p):=\psi^{-1}(D,p)$, i.e. the set of 
all $[C]\in  \mathbf{Hilb}(2,2)$ with $D$ as the maximal locally Cohen-Macaulay subscheme of $C$ with pure dimension $1$ and with $p\in D_{\mathrm{red}}$ as the support of the kernel of the surjection $\Oo _C \to \Oo _D$. If $p\notin D_{\mathrm{red}}$, then we have $C \cong D\sqcup \{p\}$ as schemes and so $\Aa(D,p)$ is a single-point space. Thus it is sufficient to consider the scheme $C$ with $p\in D_{\mathrm{red}}$.

\begin{lemma}\label{aa22}
For $D\in \mathbf{Hilb}(2,1)$ and $p\in D_{\mathrm{red}}$, we have $\Aa(D,p) \cong \PP^1$. 
\end{lemma}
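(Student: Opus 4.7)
The plan is to identify $\Aa(D,p)$ with the projectivization of a $2$-dimensional $\Hom$ space and conclude it is $\PP^1$.

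\emph{Translation to sheaf data.} A closed subscheme $[C] \in \Aa(D,p)$ is determined by its ideal sheaf $\Ii_C \subset \Oo_Q$, and the defining conditions $D \subset C$ with $\Oo_C/\Oo_D \cong \CC_p$ amount to $\Ii_C \subset \Ii_D$ with $\Ii_D/\Ii_C \cong \CC_p$. Such $\Ii_C$ are precisely the kernels of surjections $\varphi \colon \Ii_D \twoheadrightarrow \CC_p$, and two such surjections share the same kernel if and only if they differ by a scalar in $\Aut(\CC_p) = \CC^{\ast}$. Hence as a set
$$\Aa(D,p) \;\cong\; \PP\bigl(\Hom_{\Oo_Q}(\Ii_D, \CC_p)\bigr).$$

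\emph{Dimension count.} The adjunction for a skyscraper gives $\Hom_{\Oo_Q}(\Ii_D, \CC_p) = (\Ii_D/\mm_p\Ii_D)^{\vee}$, so it suffices to show $\dim_\CC(\Ii_D/\mm_p\Ii_D) = 2$. By Lemma \ref{a1.1}, $D$ equals the plane section $\langle D\rangle \cap Q$ with $\langle D\rangle \cong \PP^2$. Since $Q$ contains no plane, the two linear forms of $\PP^4$ cutting out $\langle D\rangle$ restrict on $Q$ to a pair of sections generating $\Ii_D$ locally; because $\Oo_Q$ is Cohen-Macaulay of dimension $3$ and $\dim \Oo_D = 1$, they form a regular sequence. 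Therefore $D$ is a local complete intersection in $Q$, the conormal sheaf $\Ii_D/\Ii_D^2 = N_{D|Q}^{\vee}$ is locally free of rank $2$ on $D$, and evaluating at $p$ yields $\dim_{\CC}(\Ii_D/\mm_p \Ii_D) = 2$. Combined with the first step, this shows $\Aa(D,p) \cong \PP^1$ on the level of sets.

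\emph{Scheme structure.} To upgrade the set-theoretic bijection to an isomorphism of schemes, I would use the tautological surjection on $Q \times \PP^1$ (with $\PP^1 := \PP(\Hom_{\Oo_Q}(\Ii_D, \CC_p))$) to produce a flat family $\Ii_{\mathcal{C}}$ of ideal sheaves whose fibers all have Hilbert polynomial $2t+2$; the induced classifying morphism $\PP^1 \to \mathbf{Hilb}(2,2)$ is injective on both closed points (by the first step) and tangent vectors, hence a closed immersion with image $\Aa(D,p)$. This last upgrade is the one requiring care; the main content of the argument is the local complete intersection property of $D$, which reduces the calculation to the dimension of the fiber of the conormal bundle.
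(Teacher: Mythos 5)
Your proof is correct, but it takes a genuinely different route from the paper. The paper argues by explicit case analysis in local coordinates at $p$: it treats separately the cases where $D$ is smooth at $p$ (ideal $(y,z)$), where $D$ is a reduced conic singular at $p$ (ideal $(z,xy)$), and where $D\in \Dd_1$ (ideal $(z,x^2)$), and in each case lists by hand the ideals $J\subset \Ii_{D,p}$ with $\dim_\CC \Ii_{D,p}/J=1$, exhibiting a $\PP^1$ of them. You instead observe once and for all that $\Aa(D,p)=\PP\bigl(\Hom_{\Oo_Q}(\Ii_D,\CC_p)\bigr)$ and that $\Hom_{\Oo_Q}(\Ii_D,\CC_p)^\vee=\Ii_D/\mm_p\Ii_D$ is the fibre of the conormal sheaf, which has dimension $2$ because $D=\langle D\rangle\cap Q$ is a complete intersection of two hyperplane sections (one could equally cite the Koszul resolution $0\to\Oo_Q(-2)\to\Oo_Q(-1)^{\oplus 2}\to\Ii_D\to 0$ already displayed in Lemma \ref{a1.1}, which gives $\Ii_D\otimes\Oo_D\cong\Oo_D(-1)^{\oplus 2}$). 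Your identification of $\Aa(D,p)$ with kernels of surjections $\Ii_D\twoheadrightarrow\CC_p$ modulo $\Aut(\CC_p)=\CC^*$ is sound (every nonzero map to a simple skyscraper is surjective, and every such kernel does yield a curve with $D$ as maximal locally CM subcurve, since the kernel of $\Oo_C\to\Oo_D$ is the torsion $\CC_p$); the only quibble is the notation ``$\Oo_C/\Oo_D$'' for what is really $\ker(\Oo_C\to\Oo_D)=\Ii_D/\Ii_C$, which you immediately restate correctly. What your approach buys is uniformity, brevity, and a natural scheme structure on $\Aa(D,p)$ via the tautological family, which the paper does not address. What the paper's approach buys is the explicit list of generators of the ideals $J$ (e.g.\ $(axy+bz,z^2,xz,yz,x^2y,xy^2)$), which is then reused downstream in Corollary \ref{rer} to decide which member of $\Aa(D,p)$ lies on the singular quadric surface $z^2-xy=0$; with your formulation that later computation would have to be redone or rephrased.
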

\begin{proof}
Assume that $D$ is smooth at $p$. In the local ring $\Oo _{Q,p}$, there are generators $\{x,y,z\}$ of the maximal ideal $\mathfrak{m}$ of $\Oo _{Q,p}$ such that $D$ has local equations $y=z=0$. There is a bijection between $\Aa (D,p)$ and the set of all ideals $J$ of $\Oo _{Q,p}$ contained in $(y,z)$ and with a $1$-dimensional vector space $(y,z)/J$. In particular, $J$ contains a linear combination of $y$ and $z$. Thus we may assume that $z\in J$ and so we get $(y,z)/J \cong (y)/J'$ for an ideal $J'\subset \CC[x,y]$. The only possibility of such $J'$ is $(xy,y^2)$ and so the set of ideals $J$ is parametrized by the set of planes containing the line $y=z=0$. In other words, letting $T_pD \subset T_pQ$ be the tangent line of $D$ at $p$, there is a bijection between $\Aa(D,p)$ and the planes in $T_pQ$ containing $T_pD$. Hence we have $\Aa(D,p) \cong \PP^1$. 

Assume now that $D$ is reduced and that $p$ is a singular point of $D$. In this case $D$ is a reduced conic with the singular point $p$. We may assume that $D$ is locally defined by $z=xy=0$ with the maximal ideal $\mathfrak{m}=(x,y,z)$ of $\Oo_{Q,p}$. Again there is a bijection between $\Aa(D,p)$ and the set of ideals $J$ contained in $(xy,z)$ and with a $1$-dimensional vector space $(xy,z)/J$. Thus $J$ contains a linear combination of $xy$ and $z$, say $axy+bz\in J$. If $a=0$, i.e. $z\in J$, then $C$ is contained in the plane $z=0$, then we have $(xy,z)/J \cong (xy)/J'$ for an ideal $J' \subset \CC[x,y]$. Since $\dim_{\CC} (xy)/J'=1$, so the only possibility is $J'=(x^2y,xy^2)$. If $a\ne 0$, then we have $xy+cz \in J$. Then we must have an isomorphism $(xy,z)/J \cong \CC\{z\}$ defined by $xy \mapsto -cz$, since we have $\dim_{\CC}(xy,z)/J=1$. Thus we have $\{z^2,xz,yz\} \subset J$ and so we get $\{xy+cz, z^2, xz, yz\} \in J$. Indeed the ideal $(xy+cz, z^2, xz, yz)$ defines $D$ with the embedded point $p$ and so we have $J=(xy+cz, z^2, xz, yz)$. Overall we get $J=(axy+bz, z^2, xz, yz,x^2y, xy^2)$ and thus the ideals $J$ are parametrized by the choice of coordinates $(a: b) \in \PP^1$ and so we get $\Aa(D,p) \cong \PP^1$. In fact, there are two kinds of curves in $\Aa(D,p)$, one with $a=0$ and the others with $a\ne 0$. The curves with $a\ne 0$ are isomorphic to each others, but they define different elements in $\mathbf{Hilb}(2,2)$.

Now assume $D\in \Dd_1$. Since there exists a unique plane $\langle D \rangle \cong \PP^2$ containing $D$, we may assume that $D$ has local equation $z=x^2=0$, i.e. $\langle D \rangle$ is defined by $z=0$. The ideal $J$ defining $C$ is contained in $(z,x^2)$ such that the dimension of the vector space $(z,x^2)/J$ is $1$. Similarly as in the previous case, if $z\in J$, then we have $(z,x^2)/J \cong (x^2)/J'$ for an ideal $J'\subset \CC[x,y]$. Thus we have $J'=(x^3, x^2y)$. If $z\not \in J$, then we have an isomorphism $(z,x^2)/J \cong \CC\{z\}$ defined by $x^2 \mapsto -cz$ for some $c\in \CC$ and so we get $J=(x^2+cz, z^2, xz, yz)$. Overall we get $J=(ax^2+bz, z^2, xz,yz,x^3, x^2y)$ and thus the ideals $J$ are parametrized by the choice of coordinates $(a:b) \in \PP^1$ for which $ax^2+bz\in J$ and so we get $\Aa(D,p)\cong \PP^1$. 
\end{proof}

Let $\II_1$ be the incidence variety of $\mathbf{Hilb}(2,1)\times Q$, i.e. 
$$\II_1:=\{(D,p)\in \mathbf{Hilb}(2,1)\times Q~|~ p\in D\}.$$
The set of conics on $Q$ passing through a fixed point $p\in Q$ is isomorphic to $\mathrm{Gr}(2,4)$ and so $\pi_2 : \II_1 \to Q$ is a $\mathrm{Gr}(2,4)$-fibration. In particular $\II_1$ is a smooth variety of dimension $7$. Let us define $\II_2$ to be the diagonal over $\mathrm{Sym}^2(\PP^3)$, the second symmetric power of $\PP^3$.

Let $\Hh_1$ be the subvariety of $\mathbf{Hilb}(2,2)$ parametrizing the non-locally Cohen Macaulay curves and then we have a morphism $\psi_{|\Hh_1} : \Hh_1 \to \mathbf{Hilb}(2,1)\times Q$. We also let $\Hh_2$ be the closure of $\mathbf{Hilb}(2,2)_+$. 

Let us set $\Hh' :=(\Hh_1 \cap \Hh_2)_{\mathrm{red}}$. If $D\in \mathbf{Hilb}(2,1)$ and $p\notin D_{\mathrm{red}}$, then $C:= D\cup \{p\}$ is a smooth point of $\mathbf{Hilb}(2,2)$.Therefore we have $C\notin \Hh'$.

\begin{lemma}\label{deg}
Let $D$ be a conic in $\mathbf{Hilb}(2,1)$ and $p$ a point on $D$. 
\begin{enumerate}
\item If $p\in D_{\mathrm{sm}}$, we have $\Aa(D,p) \cap \Hh' = \emptyset$, i.e. no curve in $\Aa(D,p)$ is a flat limit of $\mathbf{Hilb}(2,2)_+$. 
\item If $p\in D_{\mathrm{sing}}$, we have $\Aa(D,p) \cap \Hh'\ne \emptyset$, i.e. there exists a curve $C\in \Aa(D,p)$ which is a flat limit of $\mathbf{Hilb}(2,2)_+$. 
\end{enumerate}   
\end{lemma}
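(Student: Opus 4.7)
For $(1)$ I assume for contradiction that $C\in \Aa(D,p)$ with $p\in D_{\mathrm{sm}}$ is the flat limit of a family $\{C_t\}\subset \mathbf{Hilb}(2,2)_+$ over a smooth germ $(T,0)$, and derive a local contradiction using the irreducible-component structure of $\mathbf{Hilb}(2,2)_+$. By Proposition \ref{a3} the scheme $\mathbf{Hilb}(2,2)_+$ is irreducible of dimension $6$, and by Lemma \ref{a2} the ribbon locus $\Dd_2$ is closed of dimension $5$, so either $C_t\in \Dd_2$ for every $t$ or the generic $C_t$ is a disjoint union $L_{1,t}\cup L_{2,t}$ of two lines. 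In the ribbon case $(C_t)_{\mathrm{red}}=L_t$ is a line whose flat limit $L$ is a line, so $D_{\mathrm{red}}=L$ forces $D\in \Dd_1$; but any $D\in \Dd_1$ is a ribbon with $D_{\mathrm{sm}}=\emptyset$, a contradiction. In the disjoint-pair case put $L_i:=\lim_{t\to 0}L_{i,t}$. If $L_1=L_2$ we reduce to the previous case; otherwise $D=L_1\cup L_2$ and, setting $q:=L_1\cap L_2$, each closure $\overline{L_{i,t}}\subset Q\times T$ is a smooth $\PP^1$-bundle over $T$, so away from $q$ the total family is a smooth family of smooth curves and the fiber $C$ carries no embedded point outside $q$. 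Since $\chi(\Oo_C)-\chi(\Oo_D)=1$ forces exactly one embedded point, it must lie at $q\in D_{\mathrm{sing}}$, again contradicting $p\in D_{\mathrm{sm}}$.

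For $(2)$ I construct explicit flat families. If $D=L_1\cup L_2$ is reducible and $p=L_1\cap L_2$, the tangent directions $T_pL_1,T_pL_2$ both lie in $T_pQ$, so $\langle L_1,L_2\rangle\subset T_pQ$ and $T:=T_pQ\cap Q$ is a quadric cone with vertex $p$ containing $D$; Remark \ref{rem33} applied with $L=L_1$ and $R=L_2$ then yields a curve $B\in \Aa(D,p)\cap \Hh_2$ as the flat limit of disjoint pairs $L_1\cup R_t\in \mathbf{Hilb}(2,2)_+$. If instead $D\in \Dd_1$ with $L:=D_{\mathrm{red}}$ and $p\in L$ arbitrary, I take a pencil of hyperplanes $H_t\subset \PP^4$ containing $L$ with $H_0=T_pQ$ and $H_t\cap Q$ a smooth quadric surface for $t\ne 0$, and consider the flat family of ribbons $2L\subset H_t\cap Q\in \Dd_2\subset \mathbf{Hilb}(2,2)_+$. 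In the coordinates $Q=\{x_0x_1+x_2x_3+x_4^2=0\}$, $L=\{x_0=x_2=x_4=0\}$, $H_t=\{x_0+tx_4=0\}$ and $p=[0:1:0:0:0]$, a direct local computation identifies the ideal of the flat limit at $p$ inside the local ring of $H_0\cap Q$ with $(x_2^2,\, x_2x_4,\, x_4^2)$; primary decomposition then exhibits the maximal locally Cohen--Macaulay part as the unique planar double line $D\in \Dd_1(L)$ lying in the plane $\{x_0=x_2=0\}$ and the embedded prime as the maximal ideal at $p$, placing the limit in $\Aa(D,p)\cap \Hh_2$.

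The delicate point in $(1)$ is the localization argument: the assertion that away from the collision locus the total family of disjoint line pairs admits no embedded point in the limit, which follows from smoothness of each $\overline{L_{i,t}}$ as a $\PP^1$-bundle over $T$ but requires carefully setting up the local picture at $p$. The main technical step in $(2)$ is the coordinate computation, which must pin down both that the limit of $2L\subset H_t\cap Q$ has locally Cohen--Macaulay part precisely the prescribed $D\in \Dd_1(L)$ and an embedded point exactly at the tangency point $p$, rather than remaining inside $\Dd_2$ or acquiring the embedded point at some other point of $L$.
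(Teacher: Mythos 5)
Your proof is correct, and it diverges from the paper's in an interesting way. For part (1) you follow the same overall strategy as the paper --- split into the cases where the generic member of the degenerating family lies in $\Dd_2$ or is a disjoint pair of lines, and show the embedded point of the limit must sit at the collision point --- but where the paper invokes \cite[Proposition X.2.1]{ACG} to exclude a reduced nodal limit of disjoint smooth branches, you compute the flat limit directly away from $q=L_1\cap L_2$ using that each $\overline{L_{i,t}}$ is a $\PP^1$-bundle over the base; this is a more self-contained justification of the same step (modulo the harmless finite base change needed to separate the two lines of the generic fibre, which neither you nor the paper mentions). In part (2) the reducible case coincides with the paper's, both resting on the construction of Remark \ref{rem33}; but your treatment of the case $D\in\Dd_1$ is genuinely different. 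The paper degenerates disjoint pairs $L\sqcup R_\lambda$ with $R_\lambda\to L$ and then uses transitivity of $\mathrm{Aut}(Q)$ on pairs $(L,p)$ to reach every embedded point, whereas you degenerate the ribbons $2L\subset H_t\cap Q\in\Dd_2$ as the hyperplane $H_t$ tends to $T_pQ$ and compute the limit ideal explicitly. Your route buys two things: the embedded point lands manifestly at the prescribed point $p$ (the tangency point), with no homogeneity argument needed, and the limit is explicitly verified to be non--locally-CM with CM part the unique element of $\Dd_1(L)$ --- a point the paper's construction leaves implicit, and one that genuinely requires a check, since for a bad choice of family (e.g.\ $R_\lambda$ the lines of a fixed smooth quadric surface in the ruling of $L$) the limit of $L\cup R_\lambda$ stays in $\Dd_2$ and never leaves $\mathbf{Hilb}(2,2)_+$. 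I verified your coordinate computation: the ideal of $2L\subset H_t\cap Q$ in the affine chart is $(x_2^2,x_2x_4,x_4^2,x_2x_3-tx_4)$, its naive limit $(x_2,x_4)^2+(x_2x_3)$ has the same Hilbert function and is therefore the flat limit, and its primary decomposition is $(x_2,x_4^2)\cap\bigl((x_2,x_4)^2+(x_3)\bigr)$, giving the planar double line in $\{x_0=x_2=0\}$ with a single embedded point at $p$, exactly as you claim.
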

\begin{proof}
Since $\mathrm{Gr}(3,5)$ is complete, at least one irreducible component of $D_{\mathrm{red}}$ is a line and so $D$ is a reducible conic. Let $e$ be the singular point of $D$. Assume the existence of a flat family $\pi :\Gamma \to T$ with $T$ an irreducible curve, $o\in T$, $\pi ^{-1}(o) =C$ and $\pi ^{-1}(t)\in \mathbf{Hilb}(2,2)_+$ for all $t\in T\setminus \{o\}$. Since the support of the nilradical of $\Oo _C$ is a single point $p$,t hen  $\pi ^{-1}(t)\notin \Dd _2$ for a general $t\in T$. Since $p\ne e$, there is an open neighborhood $U$ of $e$ in $\Gamma$ such that $\pi_{|U}: U \to \pi (U)$ has reduced fibers, one connected fiber with an ordinary node and a general fiber a smooth curve with two connected components because which is impossible by \cite[Proposition X.2.1]{ACG}. 

\quad{(a)} Assume that $D$ is a reducible conic with the singular point $p \ne e\in D$. Since the support of the nilradical of $\Oo _C$ is a single point $p$,t hen  $\pi ^{-1}(t)\notin \Dd _2$ for a general $t\in T$. Since $p\ne e$, there is an open neighborhood $U$ of $e$ in $\Gamma$ such that $\pi_{|U}: U \to \pi (U)$ has reduced fibers, one connected fiber with an ordinary node and a general fiber a smooth curve with two connected components because which is impossible by \cite[Proposition X.2.1]{ACG}. 

\quad{(b)} Assume that $D$ is a reducible conic and let $p$ be its singular point. Write $D = L\cup R$ with $L, R$ lines. Let $\{R_t\}_{t\in T}$ be a family of lines of $Q$
with $T$ an integral curve, $R_o = R$ for some $o\in T$ and $R_t\cap D =\emptyset$ for all $t\in T\setminus \{o\}$. Since $\mathbf{Hilb}(2,2)$ is proper, the family
$\{L\cup R_t\}_{t\in T\setminus \{o\}}$ has a limit $C\in \Hh '$. We have $\psi ({C}) = (D,p)$.

\quad{({c})} For a fixed line $L\subset Q$ and any $p, p'\in D$, set $C' := Q\cap T_pQ\cap T_{p'}Q\in \Dd_1$ that is the only element of $\Dd _1$ with $L$ as its reduction. Take a flat family of lines $\{R_\lambda\}$ of $Q$ disjoint from $L$ and with $L$ as its flat limit. Taking $\{R_\lambda \cup L\}$, we get $C\in \Hh '$ with $\psi ({C}) =(C',p'')$ for some $p''\in L$. Since $\mathrm{Aut}(Q)$ acts transitively on the set of all $(L,p'')$ we get that for each $p_1\in L$ there is $C_1\in \Hh '$ with $\psi (C_1) =(C',p_1)$.
\end{proof}

\begin{corollary}\label{rer}
For a singular conic $D$ in $Q$ and a point $p\in D_{\mathrm{sing}}$, we have
$$\left|\Aa(D,p) \cap \Hh'\right|=1.$$
\end{corollary}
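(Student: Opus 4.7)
The plan is to verify $|\Aa(D,p) \cap \Hh'| \le 1$; the reverse inequality is Lemma \ref{deg}. Since $\Aa(D,p) \cong \PP^1$ by Lemma \ref{aa22}, I will cut down to a single point by showing that every $C \in \Aa(D,p) \cap \Hh'$ has linear span $\langle C \rangle = T_pQ$, and that this linear-span condition identifies a unique point in the $\PP^1$ via the explicit parametrization.

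First I would show that $\langle C \rangle$ is a hyperplane strictly containing $\langle D \rangle$. Twisting $0 \to \CC_p \to \Oo_C \to \Oo_D \to 0$ by $\Oo(1)$ yields $\chi(\Oo_C(1)) = 4$. A degree-two curve in $\PP^4$ spans at most a $\PP^3$, and since $\langle D \rangle \subseteq \langle C \rangle$ is a plane, the equality $\langle C \rangle = \langle D \rangle$ would force $C \subseteq \langle D \rangle \cap Q = D$, contradicting $\chi(\Oo_C) = 2 > 1 = \chi(\Oo_D)$. Hence $\langle C \rangle = M$ is a hyperplane strictly containing $\langle D \rangle$.

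The core of the argument is to force $M = T_pQ$. Any flattening family $\{C_t\} \subset \mathbf{Hilb}(2,2)_+$ degenerating to $C$ has each $C_t$ an effective Cartier divisor on its spanning quadric surface $S_t = \langle C_t \rangle \cap Q$ (two disjoint rulings or a ribbon in one ruling of a smooth quadric). Were the limit $S_0 = M \cap Q$ smooth, the family $\{S_t\}$ would be smooth near $t = 0$, and properness of the relative scheme of effective divisors in the relevant numerical class on a smooth family of surfaces would force the flat limit $C$ to stay Cartier on $S_0$, hence locally Cohen--Macaulay, contradicting $C \in \Hh_1$. So $S_0$ is singular; since $Q$ is smooth, Remark \ref{rem443} forces $M = T_qQ$ for some $q \in Q$, with $S_0$ the quadric cone of vertex $q$. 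Applied near any smooth point of $S_0$ the same argument shows $C$ is LCM there, so the embedded point of $C$ can only lie at $q$; since it is at $p$ by definition of $\Aa(D,p)$, we conclude $q = p$ and $M = T_pQ$.

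Uniqueness would then follow by a direct computation in the ideals of Lemma \ref{aa22}. In the reducible-conic case, with local ideal $J = (axy + bz, z^2, xz, yz, x^2y, xy^2)$ at $p$ (the $\Dd_1$ case is analogous with $x^2$ in place of $xy$), the pencil of hyperplanes $M_{(\alpha:\beta)}$ through $\langle D \rangle$ has local equation on $Q$ of the form $\alpha xy + \alpha z^2 - \beta z$ after substituting the equation of $Q$, and $M_{(\alpha:\beta)} \cap Q \supseteq C$ reduces to the single linear relation $\alpha b + \beta a = 0$. This realizes a bijection $\Aa(D,p) \cong \PP^1 \cong \{\text{hyperplanes through } \langle D \rangle\}$ via $(a{:}b) \mapsto (a{:}-b)$, under which exactly one point matches $M = T_pQ$. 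The main obstacle will be making the Cartier-rigidity step rigorous: one must justify that a flat degeneration of effective Cartier divisors on a smooth family of surfaces stays Cartier, and since $\mathbf{Hilb}(2,2)_+$-curves on a smooth quadric surface split between the classes $|(2,0)|$ and $|(0,2)|$, each connected component of this locally constant datum should be handled separately before invoking properness of the relative linear system.
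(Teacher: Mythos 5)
Your argument is correct and shares the skeleton of the paper's proof: the lower bound is Lemma \ref{deg}(2), and the upper bound comes from forcing any $C\in\Aa(D,p)\cap\Hh'$ to lie on the quadric cone $Q\cap T_pQ$ and then checking against the explicit ideals of Lemma \ref{aa22} that exactly one member of the pencil $\Aa(D,p)\cong\PP^1$ does so. Where you differ is in how the cone is forced. The paper notes $h^0(\Oo_C(1))=4$, so $C$ lies on the quadric surface $Q\cap\langle C\rangle$, and then invokes the proof of \cite[Lemma 2]{PS} to assert that any surface containing $C$ is singular at $p$ (a tangent-space/semicontinuity argument, available because $p\in D_{\mathrm{sing}}$), after which the unique ideal containing $z^2-xy$ is read off. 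You replace the external citation by a self-contained degeneration argument: the spanning hyperplanes $\langle C_t\rangle$ of a flattening family converge to $\langle C\rangle$, and were $Q\cap\langle C\rangle$ smooth along $C$ near a point, the limit of the relative Cartier divisors $C_t\subset Q\cap\langle C_t\rangle$ would stay Cartier, hence locally CM, there; so the embedded point must sit at the vertex of a cone, i.e.\ $\langle C\rangle=T_pQ$. The rigidity step you flag is indeed the only delicate point, but it is standard: since the total family of curves is flat over an integral base, its associated points are those of the generic fibre, so it is pure of codimension one in the (smooth, over the smooth locus of the surface family) total space and hence a relative Cartier divisor there. Your closing incidence computation ($\alpha b+\beta a=0$, a $(1,1)$-correspondence between $\Aa(D,p)$ and the pencil of hyperplanes through $\langle D\rangle$) is the same local calculation the paper performs as a single containment test of $z^2-xy$ in $J$; only the displayed local equation of the pencil is garbled, not the substance. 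The trade-off is that your route avoids relying on Piene--Schlessinger at the price of the properness/purity bookkeeping, and it makes explicit the bijection $C\mapsto\langle C\rangle$ that the paper leaves implicit.
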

\begin{proof}
By (2) of Lemma \ref{deg}, we have $|\Aa(D,p) \cap \Hh_2 | \ge 1$. From the exact sequence
\begin{equation}\label{jkl}
0\to \CC_p \to \Oo_C \to \Oo_D \to 0
\end{equation}
for $C\in \Aa(D,p)$, we have $h^0(\Oo_C(1))=4$ and so we get $\langle C \rangle \cong \PP^3$. In particular, $C$ is contained in a quadric surface $Q_2:=Q \cap \langle C \rangle$. By the proof of \cite[Lemma 2]{PS}, any surface containing $C$ is singular at $p$. In particular, $Q_2$ is a singular quadric surface with the singular point $p$. Without loss of generality, we may assume that $Q_2$ is defined by the equation $z^2-xy=0$ in local coordinates $x,y,z$ of $\langle C \rangle$ and $p=(0,0,0)$. 

If $D$ is a singular conic with the singular point $p$, then we may assume that $D$ is defined by the ideal $(z,xy)$. We know from Lemma \ref{aa22} that the ideal defining $C\in \Aa(D,p)$ is $(axy+bz,z^2,xz,yz,x^2y,xy^2)$ and the only one containing $z^2-xy$ is with $b=0$, i.e. $(z^2,xy,yz,zx)$. 

If $D$ is in $\Dd_2(L)$ with $L$ defined by $(z,x)$, then $D$ is defined by the ideal $(x,z^2)$. Again the ideal defining $C\in \Aa(D,p)$ is $(az^2+bx,x^2,xz,xy,z^3,yz^2)$ and the only one containing $z^2-xy$ is with $b=0$, i.e. $(z^2,x^2,xz,xy)$. 

Hence the curves in $(\Hh_1 \cap \Hh_2)_{\mathrm{red}}$ are determined by the pairs $(D,p)$ with $D\in \GG_2$ and $p\in D_{\mathrm{sing}}$. 
\end{proof}

\begin{remark}
By Corollary \ref{rer}, the curves in $\Aa(D,p)\setminus \Hh'$ with $p\in D_{\mathrm{sing}}$ is parametrized by $\PP^1$ minus a point. Indeed, such curves are given as follows:
\begin{enumerate}
\item Fix a reducible conic $D\subset Q$ with the singular point $p$ and let $Q_2 \subset Q$ be any smooth quadric surface containing $D$ and then we have $D\in |\Oo _{Q_2}(1,1)|$. Let $E\in |\Oo _{Q_2}(1,1)|$ be any smooth conic containing $p$ and let $3p\subset Q_2$ denote the closed subscheme of $Q_2$ with $(\Ii _{p,Q_2})^3$ as its ideal sheaf. The scheme $(D\cup E)\cap (D\cup 3p)$ has $D$ as its reduction, but it contains a degree $3$ subscheme of $E$ with $p$ as its support, while $E\cap D$ is exactly the degree $2$ subscheme of $E$ with $p$ as its reduction.
\item For a double line $D\in \Dd _1$, fix any quadric cone $Q_2 \subset Q$ containing $D$ and take any point $p$ different from the vertex $o$ of $Q_2$. Let $M\subset H$ be a plane containing $p$, but not the line $D_{\mathrm{red}}$. Set $E:= Q_2\cap M$. You take as $C$ the union of $D$ and the degree $3$ subscheme of the smooth conic $E$ with $p$ as its support.
\end{enumerate}
\end{remark}

\begin{proposition}
We have 
$$\mathbf{Hilb}(2,2)=\Hh_1 \cup \Hh_2,$$
with two irreducible components $\Hh_i$ for $i=1,2$ such that
\begin{itemize}
\item[(a)] $\Hh_1$ and $\Hh_2$ are rational varieties of dimension $9$ and $6$ respectively, 
\item[(b)] $(\Hh_1 \cap \Hh_2 )_{\mathrm{red}}$ consists of singular conics $D$ with an extra point $p\in D_{\mathrm{sing}}$ such that the hyperplane section containing the curve is singular at $p$, and 
\item[(c)] $\mathbf{Hilb}(2,2)$ is smooth outside $\Hh_1 \cap \Hh_2$.  
\end{itemize}
\end{proposition}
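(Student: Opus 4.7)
The plan is to treat the two candidate components separately using the map $\psi$ from (\ref{chow}) and the results already assembled in this section, and then to read off (b) from Corollary \ref{rer} and (c) from tangent space computations.

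The decomposition $\mathbf{Hilb}(2,2)=\Hh_1\cup\Hh_2$ is immediate: the locally Cohen-Macaulay locus $\mathbf{Hilb}(2,2)_+$ is open in $\mathbf{Hilb}(2,2)$, with closure $\Hh_2$ by definition, while every other curve (carrying either an isolated $0$-dimensional component or an embedded point) lies in $\Hh_1$. For $\Hh_2$, Proposition \ref{a3} gives smoothness, irreducibility, and dimension $6$; the dense open stratum of disjoint pairs of lines is birational to $\mathrm{Sym}^2(\PP^3)$ via the identification of the family of lines on $Q$ with $\PP^3$, hence $\Hh_2$ is rational.

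For $\Hh_1$, I would use $\psi|_{\Hh_1}\colon \Hh_1\to\mathbf{Hilb}(2,1)\times Q\cong\mathrm{Gr}(3,5)\times Q$. Over the open set of pairs $(D,p)$ with $p\notin D_{\mathrm{red}}$ the fiber is the unique curve $D\sqcup\{p\}$, so $\psi|_{\Hh_1}$ is birational; this forces $\Hh_1$ to be irreducible, rational, and of dimension $9$, provided one verifies that the closed locus $\psi^{-1}(\II_1)$, where $p\in D$ with $\PP^1$-fibers (Lemma \ref{aa22}) and total dimension $8$, lies in the closure of the open stratum. The latter follows by flatly specializing $\{D\sqcup\{p_t\}\}$ as $p_t$ approaches a point $p_0\in D_{\mathrm{red}}$: since the Hilbert polynomial is constant and the support of the limit is contained in $D$, the limit must carry an embedded point at $p_0$, hence must lie in $\Aa(D,p_0)$. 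Statement (b) is then exactly Corollary \ref{rer}, which identifies $(\Hh_1\cap\Hh_2)_{\mathrm{red}}$ with the pairs $(D,p)$ where $D$ is a singular conic, $p\in D_{\mathrm{sing}}$, and $Q\cap\langle C\rangle$ is singular at $p$.

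For (c), the points of $\Hh_2\setminus\Hh_1=\mathbf{Hilb}(2,2)_+$ are smooth by Proposition \ref{a3}. At the generic point $C=D\sqcup\{p\}$ of $\Hh_1\setminus\Hh_2$ (with $p\notin D_{\mathrm{red}}$), the disjoint decomposition gives $N_{C|Q}\cong N_{D|Q}\oplus T_pQ$, hence $h^0(N_{C|Q})=6+3=9=\dim\Hh_1$ and $h^1(N_{C|Q})=0$, so $\mathbf{Hilb}(2,2)$ is smooth at $C$. The main obstacle is the remaining stratum of $\Hh_1\setminus\Hh_2$: curves $C\in\Aa(D,p)$ with $p\in D_{\mathrm{red}}$ other than the distinguished element of Corollary \ref{rer}. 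For these I anticipate a direct computation of $\Hom(\Ii_C,\Oo_C)$ using the explicit local ideals exhibited in the proof of Lemma \ref{aa22}, treating separately the cases $p\in D_{\mathrm{sm}}$, $p\in D_{\mathrm{sing}}$ with $D$ a reducible reduced conic, and $D\in\Dd_1$, each time verifying that the tangent space remains $9$-dimensional so that $\mathbf{Hilb}(2,2)$ is smooth of dimension $9$ at $C$.
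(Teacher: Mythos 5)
Your overall strategy coincides with the paper's: decompose via $\psi$, get $\Hh_2$ from Proposition \ref{a3}, get (b) from Lemma \ref{deg} and Corollary \ref{rer}, and prove irreducibility of $\Hh_1$ by showing the $8$-dimensional locus $\psi^{-1}(\II_1)$ lies in the closure of the $9$-dimensional open stratum. However, your justification of that last containment has a genuine gap. Specializing a family $\{D\sqcup\{p_t\}\}$ with $p_t\to p_0\in D_{\mathrm{red}}$ produces \emph{one} flat limit, and the constancy of the Hilbert polynomial only tells you that this limit lies somewhere in the fiber $\Aa(D,p_0)\cong\PP^1$. That shows the closure of the open stratum meets every fiber of $\psi$ over $\II_1$, i.e.\ it contains a $7$-dimensional subset of $\psi^{-1}(\II_1)$; it does not show that it contains the generic point of the irreducible $8$-dimensional set $\psi^{-1}(\II_1)$, which is what irreducibility of $\Hh_1$ actually requires. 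To close this you must control \emph{which} element of $\Aa(D,p_0)$ arises as the limit and show that, as the one-parameter family varies, these limits sweep out a dense subset of the fiber. This is exactly what the paper does: for $D$ a smooth conic and $p\in D$ with $T_pD\not\subset Q$, the general element of $\Aa(D,p)$ is indexed by a general hyperplane $H\supset T_pD$ (via the identification in Lemma \ref{aa22} of $\Aa(D,p)$ with planes in $T_pQ$ containing $T_pD$), and the family $\{D\sqcup\{q\}\}_{q\in (Q\cap H)\setminus D}$ is engineered so that its limit is precisely the element corresponding to $(D,p,H)$.

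A second, smaller issue: for (c) you only \emph{announce} the tangent-space computation at curves $C\in\Aa(D,p)$ with $p\in D_{\mathrm{red}}$ and $(D,p)$ not as in Corollary \ref{rer}. This is not a routine verification: the paper's argument passes through the quadric surface $Q_2=Q\cap\langle C\rangle$, computes $\mathcal{T}or_1(\Oo_C,\CC_p)$ to split the relevant sequence, bounds $h^0(N_{C|Q_2})\le 5$ and hence $h^0(N_{C|Q})\le 9$, and then invokes upper semicontinuity against the already-established $9$-dimensional family to force equality. Without carrying out some such computation (your proposed direct attack on $\Hom(\Ii_C,\Oo_C)$ via the explicit local ideals could work, but would still need the global bookkeeping), statement (c) is not proved on the stratum where it is least obvious.
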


\begin{proof}
By Proposition \ref{a3} and Corollary \ref{rer}, it remains to show the irreducibility of $\Hh_1$ and its smoothness over  $(\mathbf{Hilb}(2,1)\times Q)\setminus (\Hh_1 \cap \Hh_2)$. 

Let $\psi=\psi_{|\Hh_1} : \Hh _1 \to \mathbf{Hilb}(2,1) \times Q$ be the map defined from (\ref{chow}). We saw in Lemma \ref{aa22} that $\psi$ is surjective and that if $(D,p)\notin \II_1$, then the fiber is a single point, while for each $x = (D,p)\in \II _1$ we have $\psi ^{-1}(x) \cong \PP^1$ and in particular the fiber it is irreducible and of dimension $1$. Hence $\psi ^{-1}(\II _1)$ is irreducible and of dimension $8$. Thus to prove the assertion it is sufficient to prove that a general element of $\psi ^{-1}(\II _1)$ is contained in the closure of $\psi ^{-1} ((\mathbf{Hilb}(2,1) \times Q )\setminus \II_1)$.

Fix $(D,p)\in  \mathbf{Hilb}(2,1) \times Q$ with $D$ a smooth conic and $p\in D$. Let $T_pD$ be the tangent line to $D$ at $p$. For a general $(D,p)$ we may assume that the line $T_pD$ is not contained in $Q$. Let $H\subset \PP^4$ be a general hyperplane containing $T_pD$. We saw in the proof of Lemma \ref{aa22} that a general element of $\psi ^{-1}((D,p))$ corresponds to a general triple $(D,p,H)$. The set $H\cap Q$ is a smooth quadric surface. The family $\{(D,q)\}_{q\in Q\cap H \setminus D\cap H}$ has a flat limit in  $\mathbf{Hilb}(2,2)$ corresponding to $(D,p,H)$.

For the smoothness in (b), it is sufficient to check $h^0(N_{C|Q})=9$ and $h^1(N_{C|Q})=0$ for $C\in \Hh_1 \setminus (\Hh_1 \cap \Hh_2)$. If $C=D \cup \{p\}$ with $p \not\in D_{\mathrm{red}}$, then it is true since $N_{C|Q}\cong N_{D|Q}\oplus T_pQ$. Let $C$ be in $\Aa(D,p)$ for a conic $D\subset Q$ and $p\in D$. Then we have $\langle C \rangle \cong \PP^3$ and let $Q_2:=Q \cap \langle C \rangle$. From the following exact sequence
\begin{equation}
0\to N_{C|Q_2} \to N_{C|Q} \to \Oo_C(1)
\end{equation}
where $(N_{Q_2|Q})_{|C} \cong \Oo_C(1)$, we get that $h^0(N_{C|Q})\le 4+h^0(N_{C|Q_2})$. Assume that $Q_2$ is smooth and then we have $\Ii_{D,Q_2} \cong \Oo_{Q_2}(-1,-1)$. Tensoring the exact sequence
\begin{equation}
0\to \Ii_{C,Q_2} \to \Ii_{D,Q_2} \to \CC_p \to 0,
\end{equation}
by $\Oo_C$, we get 
\begin{equation}\label{er}
0\to \mathcal{T}or_1(\CC_p, \Oo_C) \to \Ii_{C,Q_2}\otimes \Oo_C \to \Oo_C(-1) \to \CC_p^{\oplus 2} \to 0.
\end{equation}
If we tensor the sequence (\ref{jkl}) by $\CC_p$, we get
$$0\to \mathcal{T}or_1(\Oo_C, \CC_p) \to \mathcal{T}or_1(\Oo_D, \CC_p) \to \CC_p \to \Oo_C \otimes \CC_p \to \CC_p \to 0$$
and so we get $\mathcal{T}or_1(\Oo_C, \CC_p) \cong \mathcal{T}or_1(\Oo_D, \CC_p)$. Tensoring the following exact sequence
$$0\to \Oo_{Q_2}(-1,-1) \to \Oo_{Q_2} \to \Oo_D \to 0$$
by $\CC_p$, we get $\mathcal{T}or_1(\Oo_D, \CC_p) \cong \CC_p$ and so $\mathcal{T}or_1(\Oo_C, \CC_p) \cong \CC_p$. Thus the sequence (\ref{er}) factors into the following two exact sequences
\begin{equation}\label{fg}
\begin{split}
0\to \CC_p\cong \mathcal{T}or_1(\CC_p, \Oo_C) \to \Ii_{C, Q_2} \otimes \Oo_C \to \Oo_D(-1,-1) \to 0,\\
0\to \Oo_D(-1,-1) \to \Oo_C(-1,-1) \to \Oo_C \otimes \CC_p \cong \CC_p^{\oplus 2} \to 0.
\end{split}
\end{equation}
Applying $\Ext^{\bullet}_{Q_2} (-, \Oo_C)$-functor to the first sequence of (\ref{fg}), we get
$$0\to \Hom_{Q_2}(\Oo_D(-1,-1), \Oo_C) \to \Hom_{Q_2}(\Ii_{C, Q_2} \otimes \Oo_C, \Oo_C) \to \Hom_{Q_2}(\CC_p, \Oo_C) \to \cdots.$$
Note that $\dim \Hom_{Q_2}(\CC_p, \Oo_C)=1$ and $\dim \Hom_{Q_2}(\Oo_D(-1,-1), \Oo_C)=4$. It implies that $h^0(N_{C|Q_2})=\dim \Hom_{Q_2}(\Ii_{C,Q_2}\otimes \Oo_C, \Oo_C)$ is at most $5$. Since $C$ is a limit of flat family of curves $D\cup \{q\}\subset Q_2$ with $q \not \in D$ and $h^0(N_{C|Q_2})$ is an upper semi-continuous function on $C$, so we get $h^0(N_{C|Q_2})=5$. Thus we have $h^0(N_{C|Q})\le 9$. Again by the upper semi-continuity, we get $h^0(N_{C|Q})=9$. 

If $Q_2$ is a singular quadric surface with the singular point $e\ne p$, then we may apply the exact same steps as in (a) by replacing $\Oo_{Q_2}(1,1)$ by $\Oo_{Q_2}(1)$, because we still have that (i) $\Ii_{D,Q_2} \cong \Oo_{Q_2}(-1)$, (ii) $\mathcal{T}or_1(\Oo_C, \CC_p) \cong \CC_p$ and (iii) $\dim \Hom_{Q_2}(\Oo_D(-1), \Oo_C)=4$. 
\end{proof}

\begin{lemma}\label{aaa0}
For $[D]\in \mathbf{Hilb}(2,1)$ and a point $p\in D$, there exists a unique non-trivial extension 
\begin{equation}\label{ext11}
0\to \Oo_D \to \Ff \to \CC_p \to 0.
\end{equation}
\end{lemma}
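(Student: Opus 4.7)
The plan is to identify isomorphism classes of extensions of $\CC_p$ by $\Oo_D$ with the vector space $\Ext^1_Q(\CC_p,\Oo_D)$ and then to show that this space is one-dimensional. Uniqueness of the non-trivial extension up to isomorphism then follows by rescaling the injection $\Oo_D\to\Ff$, since in a one-dimensional $\Ext^1$ any two non-zero classes differ by a scalar.

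First I would localize the problem at $p$. Since $\CC_p$ is supported at the single point $p$, so is every $\mathcal{E}xt^i(\CC_p,\Oo_D)$, and the local-to-global spectral sequence collapses to give
\[
\Ext^1_Q(\CC_p,\Oo_D)\;\cong\;\Ext^1_R(\CC,M),
\]
where $R:=\Oo_{Q,p}$ is a regular local ring of dimension $3$ and $M:=\Oo_{D,p}$. Next I would use that each $D\in\mathbf{Hilb}(2,1)$ is a conic in the smooth threefold $Q$, so at every $p\in D$ the ideal $\Ii_{D,p}$ is generated by a regular sequence $f,g\in R$ of length $2$; this holds uniformly in the three local models for $D$ at $p$ (smooth point, node of a reducible conic, or ribbon point of a planar double line), which conveniently avoids a case-by-case analysis.

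The core of the argument is then a short computation exploiting the Gorenstein property of $R$. Since $R$ is regular of dimension $3$, we have $\Ext^i_R(\CC,R)=0$ for $i<3$ and $\Ext^3_R(\CC,R)\cong\CC$. Applying $\Hom_R(\CC,-)$ to $0\to\Ii_{D,p}\to R\to M\to 0$ yields $\Ext^1_R(\CC,M)\cong\Ext^2_R(\CC,\Ii_{D,p})$, and applying $\Hom_R(\CC,-)$ to the Koszul resolution $0\to R\xrightarrow{(g,-f)}R^2\to\Ii_{D,p}\to 0$ identifies the latter with the kernel of
\[
\Ext^3_R(\CC,R)\;\xrightarrow{(g,-f)}\;\Ext^3_R(\CC,R)^{\oplus 2}.
\]
This map vanishes because $f,g\in\mathfrak{m}_R$ annihilate $\CC$ and hence act as zero on any $\Ext^i_R(\CC,-)$, so $\Ext^1_R(\CC,M)\cong\CC$. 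The main obstacle is largely cosmetic, namely setting up the local reduction cleanly and confirming the l.c.i.\ property of $D$ at $p$ in each local model; once those are in place, the Koszul argument is uniform and immediate, and one avoids entirely the explicit ideal-by-ideal bookkeeping carried out earlier in the paper (compare Lemma \ref{aa22}).
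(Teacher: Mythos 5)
Your argument is correct and is essentially the paper's own proof: the paper also identifies $\Ext^1(\CC_p,\Oo_D)$ with $\Ext^2(\CC_p,\Ii_D)$ via the structure sequence of $D$, resolves $\Ii_D$ by the (global, twisted) Koszul complex $0\to\Oo_Q(-2)\to\Oo_Q(-1)^{\oplus 2}\to\Ii_D\to 0$, and concludes from the vanishing at $p$ of the two linear forms cutting out the conic that the map $\Ext^3(\CC_p,\Oo_Q(-2))\to\Ext^3(\CC_p,\Oo_Q(-1))^{\oplus 2}$ is zero. Your only deviation is to localize at $p$ before running the same Koszul computation, which is a cosmetic rearrangement rather than a different route.
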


\begin{proof}
Let $D\in \mathbf{Hilb}(2,1)$ be a conic in $Q$ defined as a complete intersection of zeros of two linear forms $f_1, f_2$ with the exact sequence
\begin{equation}\label{ci}
0\to \Oo_{Q}(-2) \to \Oo_{Q}(-1)^{\oplus 2} \to \Ii_D \to 0.
\end{equation}
Applying $\Ext^{\bullet} (\CC_p, - )$-functor to (\ref{ci}) for a point $p \in D$, we get
$$0\to \Ext^2 (\CC_p, \Ii_D) \to \Ext^3 (\CC_p,\Oo_Q(-2)) \stackrel{s}{\to} \Ext^3 (\CC_p, \Oo_Q(-1))^{\oplus 2}.$$
Note that we get that $\dim \Ext^3 (\CC_p, \Oo_Q(-2))=\dim \Ext^3(\CC_p, \Oo_Q(-1))=1$ and the map $s$ is the transpose of $\Hom (\Oo_Q(2), \CC_p)^{\oplus 2} \to \Hom (\Oo_Q(1), \CC_p)$ defined by $(g_1, g_2) \mapsto f_1g_1+f_2g_2$. Since $p$ is a point on $D$, the map $s$ is a zero map and so we have $\dim \Ext ^2(\CC_p, \Ii_D)=1$. Applying $\Ext^{\bullet} (\CC_p, -)$-functor to the standard sequence for $\Oo_D$, we get $\Ext^1 (\CC_p, \Oo_D) \cong \Ext^2 (\CC_p, \Ii_D)$ and so there exists a unique non-trivial extension.
\end{proof}

\begin{proposition}\label{class}
Any semistable sheaf $\Ff$ of depth $1$ on $Q$ with Hilbert polynomial $2t+2$ is one of the following:
\begin{itemize}
\item [(i)] $\Ff \cong \Oo_{L_1} \oplus \Oo_{L_2}$ with two lines $L_1, L_2$ on $Q$, possibly $L_1=L_2$,
\item [(ii)] a non-trivial extension of $\Oo_L$ by $\Oo_L$ with $L$ a line on $Q$,
\item [(iii)] $\Ff \cong \Oo_C(p)$ for a smooth conic $C$ and $p\in C$. 
\end{itemize}
\end{proposition}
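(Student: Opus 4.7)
The plan is to classify $\Ff$ by the geometry of its scheme-theoretic support $C = C_\Ff$, which has degree $2$ in $Q$ from the leading coefficient of $\chi_\Ff(t) = 2t+2$. The reduction $C_{\mathrm{red}}$ must fall into one of three types: an irreducible smooth conic, a union $L_1 \cup L_2$ of two distinct lines, or a single line $L$. I would treat these three cases in turn, with $p$-semistability as the main constraint throughout.

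For $C$ a smooth conic, $\Ff$ is a torsion-free rank-$1$ sheaf on $C \cong \PP^1$, hence a line bundle; the condition $\chi(\Ff) = 2$ forces $\Ff \cong \Oo_C(p)$ for a unique $p \in C$, yielding case (iii), with semistability automatic from irreducibility. For $C_{\mathrm{red}} = L_1 \cup L_2$ with $L_1 \ne L_2$, I would use the pure subsheaf $\Ff_1 := \mathcal{H}om_{\Oo_Q}(\Oo_{L_1}, \Ff) \subset \Ff$, which is supported on $L_1$ and hence of the form $\Oo_{L_1}(a)$. The quotient $\Ff/\Ff_1$ is pure on $L_2$, so isomorphic to $\Oo_{L_2}(b)$ with $a+b = 0$. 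Semistability applied to $\Ff_1$ and to the quotient forces $a = b = 0$. The resulting extension $0 \to \Oo_{L_1} \to \Ff \to \Oo_{L_2} \to 0$ splits when $L_1 \cap L_2 = \emptyset$ because $\Ext^1_Q(\Oo_{L_2}, \Oo_{L_1})$ vanishes; when the lines meet, one checks that $\Oo_{L_1} \oplus \Oo_{L_2}$ remains the polystable representative, placing us in case (i).

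For $C_{\mathrm{red}} = L$, I would consider the maximal subsheaf $\Gg := \mathcal{H}om_{\Oo_Q}(\Oo_L, \Ff) \subset \Ff$ of sections killed by $\Ii_L$; this is nonzero since $\Ff$ is supported on $L$, and it is a vector bundle on $L \cong \PP^1$. If $\Gg = \Ff$, then $\Ff$ is a rank-$2$ vector bundle on $L$ of total degree $0$; its splitting type $(c,-c)$ with $c \ge 0$ is forced by semistability to $(0,0)$, giving $\Ff \cong \Oo_L^{\oplus 2}$ as the degenerate $L_1 = L_2$ case of (i). If $\Gg \subsetneq \Ff$, the scheme-theoretic support of $\Ff$ is a ribbon $T[a] \in \Dd_a(L)$ with $a \ge 1$, and $\Ff$ is a line bundle on this Gorenstein curve, so $\Ff \cong \Oo_{T[a]}(t)$ by \cite[Prop.~4.1]{be}. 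The Hilbert polynomial condition translates to $2t+a = 2$, while semistability applied to the subsheaf $\Oo_L(t+a-2) \subset \Oo_{T[a]}(t)$ from (\ref{eqb1}) gives $t+a-1 \le 1$. Combined with $t \in \ZZ$ and $a \ge 1$, these constraints force $(t,a) = (0,2)$, so $\Ff \cong \Oo_{T[2]}$ with $T[2] \in \Dd_2(L)$; then (\ref{eqb1}) exhibits $\Ff$ as a non-trivial extension of $\Oo_L$ by $\Oo_L$, giving case (ii). Conversely, I would verify that every nonzero class in $\Ext^1_Q(\Oo_L, \Oo_L) = H^0(N_{L|Q})$ produces a pure semistable sheaf with $\chi = 2t+2$, so all of case (ii) is realized.

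The hard part will be the single-line case, where the diophantine constraints from the Hilbert polynomial and from semistability must be combined to rule out every ribbon structure $T[a]$ with $a \ne 2$. A secondary subtlety is the two-line meeting case, where genuine non-split $\Oo_Q$-module extensions do occur (for instance $\Oo_{L_1 \cup L_2}(p)$ for a smooth point $p$ of the nodal conic) and one must argue that these are S-equivalent to the direct sum $\Oo_{L_1} \oplus \Oo_{L_2}$ so as to conform to the listing in (i).
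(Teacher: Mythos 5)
Your stratification by the reduced support is a legitimate alternative to the paper's argument (which instead picks a section $\Oo_D\hookrightarrow \Ff$ and analyses the cokernel), and the smooth-conic case and the $\Oo_L$-module case are fine. But the single-line case has a genuine gap: you assert that when $\Gg=\mathcal{H}om_{\Oo_Q}(\Oo_L,\Ff)\subsetneq\Ff$ the sheaf $\Ff$ is a \emph{line bundle} on its schematic support $T[a]$, and this is false. A torsion-free rank-one sheaf on a non-reduced curve need not be locally free, and the paper itself exhibits the offending sheaves: by Remark \ref{ghj}, the unique non-trivial extension of $\CC_p$ by $\Oo_D$ with $D\in\Dd_1(L)$ and $p\in L$ is pure, semistable, has schematic support $D=T[1]$, and is \emph{not} locally free at $p$ (it is $u_*\Oo_C$ for the blow-up $u:C\to D$ of the ribbon at $p$). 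Your diophantine step $2t+a=2$ only rules out line bundles on $T[a]$ for $a$ odd; it says nothing about these sheaves, so your case analysis silently drops a $\PP^1$-family for each line $L$. The omission is visible from your own proposed converse: $\PP\bigl(\Ext^1_Q(\Oo_L,\Oo_L)\bigr)\cong\PP^2$, whereas the sheaves $\Oo_C$ with $C\in\Dd_2(L)$ account only for $\PP^2\setminus\PP^1$ (Remark \ref{rem443}); the missing $\PP^1$ is exactly this family, which the paper recovers in Claim 2 of its proof by analysing the cokernel $\CC_p$ of a section $\Oo_D\hookrightarrow\Ff$ rather than assuming local freeness. These sheaves do land in case (ii), so the statement survives, but your argument does not reach them.

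A secondary point: in the nodal case $D=L_1\cup L_2$ you correctly observe that $\Oo_D(p)$ for a smooth point $p$ is a semistable non-split extension of $\Oo_{L_2}$ by $\Oo_{L_1}$, and you propose to subsume it under (i) ``up to S-equivalence.'' Since the proposition lists isomorphism types, that move does not actually prove (i) as stated; you would need either to show such $\Ff$ is isomorphic to $\Oo_{L_1}\oplus\Oo_{L_2}$ (it is not, being simple) or to exclude it. The paper's own treatment of this case dismisses these sheaves via a subsheaf of slope equal to $p(\Ff)$, which only contradicts stability, so this is a delicate spot in the statement itself --- but in any event your proposal leaves it open. By contrast, at the node the paper's Claim 1 proves the genuinely non-obvious fact that the non-trivial extension of $\CC_o$ by $\Oo_D$ \emph{is} isomorphic to $\Oo_{L_1}\oplus\Oo_{L_2}$, a verification your sketch does not supply.
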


\begin{proof}
Let $C=C_{\Ff}$ be the scheme-theoretic support of $\Ff$ and then $\deg (C)$ is either $1$ or $2$. If $\deg (C)=1$, i.e. $C=L$ is a line, then $\Ff$ is a vector bundle of rank $2$ on $L$. Thus we have $\Ff \cong \Oo_L(a_1)\oplus \Oo_L(a_2)$ for $a_1\ge a_2$ such that $a_1+a_2=0$. In particular the subbundle $\Oo_L(a_1)$ destabilize $\Ff$ unless $a_1=0$ and so we have $\Ff\cong \Oo_L^{\oplus 2}$. 

Now let us assume that $\deg (C)=2$. Since $\chi(\Ff)=2$, so we have $h^0(\Ff)\ge 2$. A non-zero section of $H^0(\Ff)$ induces an injection $u: \Oo_D \hookrightarrow \Ff$ for a subcurve $D\subset C$. If $D\subsetneq C$, then $D=L$ is a line and so we get $\mathrm{coker}(u) \cong \Oo_{L'}$ with a line $L'$, because $\chi_{\mathrm{coker}(u)}(t)=t+1$. The vanishing $\Ext^1 (\Oo_{L'},  \Oo_L)=0$ implies that $\Ff \cong \Oo_L\oplus \Oo_{L'}$. In the case of $D=C$, we get $[D] \in \mathbf{Hilb}(2,c)_+$ with $c\in \{1,2\}$ due to the semistability of $\Ff$. Note that we have $\Ff \cong \Oo_C$ if $c=2$, and $\mathrm{coker}(u)=\CC_p$ for a point $p\in C$ if $c=1$.  

Conversely, the sheaf $\Ff \cong \Oo_C$ with $[C] \in \mathbf{Hilb}(2,2)_+$ is strictly semistable from the following exact sequence
$$0\to \Oo_{L_i} \to \Ff \to \Oo_{L_j} \to 0$$
with $\{i,j\}=\{1,2\}$. Assume now $c=1$ and so $[C]\in \mathbf{Hilb}(2,2)\setminus \mathbf{Hilb}(2,2)_+$. Then $\Ff$ fits into the exact sequence (\ref{ext11}) with $p\in D$ and $[D] \in \mathbf{Hilb}(2,1)$. If $D$ is a smooth conic, then any $\Ff$ fitting into (\ref{ext11}) non-trivially is stable. Assume that $D$ is a reduced conic with the singular point $o$, say $D=L_1 \cup L_2$. If $p\in L_2 \setminus L_1$, then by applying $\mathrm{Hom}^{\bullet}( -, \Oo_{L_1})$-functor to (\ref{ext11}), we have $\Hom (\Ff, \Oo_{L_1})\cong \Hom (\Oo_C , \Oo_{L_1})$. Thus we have a surjection $u : \Ff \to \Oo_{L_1}$ with $\ker (u) \cong \Oo_{L_2}$, contradicting to the semi-stability of $\Ff$. Conversely, if $p=o$, let us assume that $\Gg\subset \Ff $ is a subsheaf with slope $>1$. With no loss of generality we may assume $\Gg \cong \Oo_{L_1}(c)$ with $c\ge 1$. The composition $u' : \Gg \to \Ff \to \CC_o$ is surjective, otherwise we would have an injection $\Oo_{L_1}(c) \to \Oo_C$, absurd. Thus we have $\ker (u') \cong \Oo_{L_1}(c-1)$ and it injects into $\Oo_C$. In particular we have $c\le 0$, a contradiction.

\quad {\emph {Claim 1 }}: We have $\Ff \cong \Oo _{L_1}\oplus \Oo _{L_2}$.

\quad {\emph {Proof of Claim 1}}: We have a surjection $u: \Oo _{L_1}\oplus \Oo _{L_2} \to \CC _p$. The sheaf $\mathrm{ker}(u)$ has depth $1$ and $2t+1$ as its Hilbert polynomial, $D$ as its support. Since the map $u_\ast: H^0( \Oo _{L_1}\oplus \Oo _{L_2}) \to H^0(\CC _p)$ is surjective,
we have $h^0(\mathrm{ker}(u)) =1$ and $h^1(\mathrm{ker}(u))=0$. By symmetry we get that the only non-zero section $s$, up to a scalar, of $\mathrm{ker}(u)$ does not vanish identically on one of the components $L_i$. We get that $s$ never vanishes, i.e. $\mathrm{ker}(u) \cong \Oo _D$. By the uniqueness of non-trivial extension due to Lemma \ref{aaa0}, we get the assertion. \qed

Now assume that $D\in \Dd_1$ with $L=D_{\mathrm{red}}$ and $p\in D$. If $\Gg \cong \Oo_L(c)$ is a subsheaf of $\Ff$ with slope $>1$, then we have $c\ge 1$. The composition $u': \Gg \to \Ff \to \CC_p$ is surjective, otherwise it would give an injection $\Oo_L(c) \to \Oo_D$ and it is absurd. Then we have $\mathrm{ker} (u') \cong \Oo_L(c-1)$ and it injects into $\Oo_D$. Since $c-1\ge 0$, so it is again absurd and so $\Ff$ is semistable. 

\quad {\emph{Claim 2}}: $\Ff$ is an extension of $\Oo_L$ by $\Oo_L$. 

\quad {\emph {Proof of Claim 2}}: Since $\Ext^1_Q(\Oo_L , \Oo_L) \cong H^0(N_{L|Q})$, so its dimension is $3$. In particular the family of non-trivial extensions of $\Oo_L$ by $\Oo_L$ as an $\Oo_Q$-sheaf up to scalar forms $\PP^2$, say $\PP (L)$. For a semistable sheaf $\Oo_C$ with $C\in \Dd_2(L)$, there exists a section $s\in H^0(\Oo_C)$, inducing an extension in $\PP (L)$. Notice that the space of sections inducing the sequence (\ref{equuu1}) form a $1$-dimensional subspace, otherwise every non-zero section of $H^0(\Oo_C)$ induce the sequence (\ref{equuu1}) and it is impossible because of $1\in H^0(\Oo_C)$. Since $\Dd_2(L)$ is isomorphic to $\PP^2 \setminus \PP^1$, so there exists a $\PP^1$-family of extensions of $\Oo_L$ by $\Oo_L$, not coming from $\Oo_C$. Since every element in $\PP (L)$ is semistable, so the only remaining possibility corresponding to $\PP^1 \subset \PP (L)$ is given as an extension of $\CC_p$ by $\Oo_D$ for $D$ the unique element in $\Dd_1(L)$ and some point $p\in L$. Indeed such an extension for every $p\in L$ admits an extension in $\PP (L)$, because $\mathrm{Aut} (Q)$ is transitive on the pairs $(L, p)$. \qed
\end{proof}

\begin{remark}
For any line $L\subset Q$ the isomorphism classes $\PP (L)$ of non-trivial extension $\Ff$ of $\Oo_L$ by itself are parametrized by $\PP^2$. We have $\PP (L)=\mathfrak{F}_1 \sqcup \mathfrak{F}_2$, where $\mathfrak{F}_1$ is parametrized by $\Dd_2(L)=\PP^2 \setminus \PP^1$ and $\mathfrak{F}_2$ is parametrized by $L\cong \PP^1$, i.e. the extensions of $\CC_p$ by $\Oo_D$ with $\{D\}=\Dd_1(L)$ and $p\in L$. So the corresponding locally CM curve to $\Ff \in \mathfrak{F}_2$ is the unique $D\in \Dd_1(L)$. 

Moreover, the sheaf $\Ff\in \mathfrak{F}_2$ is not isomorphic to $\Oo_L^{\oplus 2}$. Indeed from two non-proportional surjective maps $u_i: \Ff \to \Oo _L$ for $i=1,2$, we get two non-proportional, possibly not surjective, maps $v_i: \Oo _D\to \Oo _L$. One of them cannot be surjective and so it has $\Oo _L(-c)$ with $c\ge 1$, which is impossible since $\Oo_D$ is stable by Proposition \ref{a1}. 
\end{remark}

\begin{remark}\label{ghj}
Let $C$ be an abstract ribbon with $\chi (\Oo_C) =2$ and $\Ff$ be the unique non-trivial extension of $\CC_p$ by $\Oo_D$ with $D\in \Dd_1(L)$ and $p\in L$. There is a unique morphism $u: C\to D$, the blow-up of $D$ at $p$ in the terminology of \cite{eg}; set $\Gg:= u_* \Oo_C$ and then it is an $\Oo_D$-module with an injection $\Oo_D \to \Gg$ with $\CC_p$ as its cokernel. Since $\Gg$ is torsion-free, this extension of $\CC_p$ as an $\Oo_D$-module is not trivial. Since $\Oo_D$ is a quotient of $\Oo_Q$, so $\Gg$ is also an extension as $\Oo_Q$-module and it is non-trivial, since it is torsion-free as an $\Oo_D$-module. Hence we get $\Gg\cong \Ff$. $\Ff$ is not locally free at $p$, because it is locally free outside $p$ and we know that all line bundles on $D$ have odd Euler number with the form $\Oo_D(t)$, $t\in \ZZ$. Thus the fiber of $\Ff$ at $p$ is not a $1$-dimensional vector space by Nakayama's lemma. Hence $\Ff_{|L}$ is not a line bundle on $L$ and so we get $\Ff_{|L} \cong \Oo_L \oplus \CC_p$. 
\end{remark}

\begin{corollary}\label{class+}
Any $[\Ff ]\in \mathbf{M}(2,2)$ appears in this list:
\begin{itemize}
\item [(i)] $\Ff \cong \Oo_{L_1} \oplus \Oo_{L_2}$ with two lines $L_1, L_2$ on $Q$, possibly $L_1=L_2$,
\item [(ii)] $\Ff \cong \Oo_C(p)$ for a smooth conic $C$ and $p\in C$. 
\end{itemize}
\end{corollary}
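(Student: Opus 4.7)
The plan is to read off the corollary from Proposition~\ref{class} by passing to $S$-equivalence classes, since closed points of $\mathbf{M}(2,2)$ parametrize $S$-equivalence classes of semistable sheaves. Proposition~\ref{class} lists three families: (i) direct sums $\Oo_{L_1}\oplus \Oo_{L_2}$, (ii) non-trivial self-extensions of $\Oo_L$ for some line $L\subset Q$, and (iii) sheaves $\Oo_C(p)$ with $C$ a smooth conic and $p\in C$. Families (i) and (iii) already match the two items in the corollary verbatim, so the only work is to show that every sheaf of type (ii) is $S$-equivalent to one of type (i).

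First I would verify that $\Oo_L$ is stable with $p(\Oo_L)=1$: since $\Oo_L$ is a line bundle of degree $0$ on $L\cong \PP^1$, any nonzero proper subsheaf has the form $\Oo_L(-D)$ for some nonzero effective divisor $D$ on $L$, and so has strictly smaller $p$-slope. Given a non-trivial extension $0\to \Oo_L\to \Ff\to \Oo_L\to 0$ as in case (ii) of Proposition~\ref{class}, this sequence is itself a Jordan--H\"older filtration of $\Ff$, because both graded pieces are stable of slope equal to $p(\Ff)=1$. Consequently $\mathrm{gr}(\Ff)\cong \Oo_L\oplus \Oo_L$, and in $\mathbf{M}(2,2)$ we have $[\Ff]=[\Oo_L\oplus \Oo_L]$, which is the case $L_1=L_2=L$ of item (i) of the corollary. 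In particular the ribbon sheaves $\Oo_C$ with $C\in \Dd_2(L)$ and the extensions of $\CC_p$ by $\Oo_D$ with $D\in \Dd_1(L)$ identified inside type (ii) in the proof of Proposition~\ref{class} all collapse to the single class $[\Oo_L\oplus\Oo_L]$.

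Finally, I would observe that every sheaf appearing in items (i) and (ii) of the corollary actually defines a point of $\mathbf{M}(2,2)$: $\Oo_{L_1}\oplus \Oo_{L_2}$ is polystable, being a direct sum of two stable sheaves of slope $1$, and $\Oo_C(p)$ on a smooth conic is stable as already noted in the proof of Proposition~\ref{class}. There is no genuine obstacle here; the only point requiring care is the recognition that the subsheaf $\Oo_L\subset \Ff$ in the non-trivial extension produces a complete Jordan--H\"older filtration, which is immediate from the stability of $\Oo_L$. This reduces Proposition~\ref{class} to the two canonical representatives stated in Corollary~\ref{class+}.
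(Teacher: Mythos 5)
Your proposal is correct and matches the paper's (implicit) reasoning exactly: the paper states Corollary~\ref{class+} without proof, and the Remark immediately following it records precisely your key point, namely that a non-trivial self-extension of $\Oo_L$ has Jordan--H\"older graded object $\Oo_L^{\oplus 2}$ and hence defines the same point of $\mathbf{M}(2,2)$ as the split sheaf. Nothing further is needed.
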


\begin{remark}
\begin{enumerate}
\item For each sheaf $\Ff \in \PP (L)$, we get the exact sequence
\begin{equation}\label{b}
0\to \Oo_L \to \Ff \to \Oo_L \to 0
\end{equation}
and so its corresponding equivalence class in $\mathbf{M}(2,2)$ coincides with $[\Oo_L^{\oplus 2}]$. 
\item In (iii) of Proposition \ref{class} with $D$ a smooth conic, we have $\Ff \cong \Oo_C(p)$ and so $\Ff$ is determined only by the choice of $D$. 
\end{enumerate}
\end{remark}

\begin{lemma}\label{123}
For a semistable sheaf $\Ff$ of depth $1$ on $Q$ with Hilbert polynomial $2t+2$, we have 
$$
\dim \Ext^1_Q(\Ff, \Ff)=\left\{
                                           \begin{array}{llll}
                                             12, & \hbox{if $\Ff\cong \Oo_L^{\oplus 2}$ for a line $L\subset Q$,}\\                                           
                                             8, & \hbox{if $C_{\Ff}\in \GG_2$,}\\
                                             6, & \hbox{otherwise.}
                                           \end{array}
                                         \right.\
$$
Recall that we have $C_{\Ff}\in \GG_2$ if and only if either 
\begin{itemize}
\item[(a)] $\Ff\cong \Oo_{L_1}\oplus \Oo_{L_2}$ for two lines $L_1,L_2$ with $|L_1 \cap L_2|=1$ or 
\item[(b)] $\Ff$ admits (\ref{ext11}) non-trivially with $D\in \Dd_1$ and $p\in D_{\mathrm{red}}$.
\end{itemize}
\end{lemma}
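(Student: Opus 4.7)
The plan is to treat the classification in Proposition~\ref{class} (with Claim 1 and Claim 2 of its proof, refining case~(ii) via Remark~\ref{ghj}) and compute $\Ext^1_Q(\Ff,\Ff)$ in each case. Whenever $\Ff\cong \Oo_C$ or a line-bundle twist on an l.c.i.\ curve $C\subset Q$, I would use the local-to-global spectral sequence together with $\mathcal{E}xt^q_Q(\Oo_C,\Oo_C)\cong \wedge^q N_{C|Q}$, which gives $\Ext^1_Q(\Oo_C,\Oo_C)=h^0(N_{C|Q})+h^1(\Oo_C)$; otherwise I would apply $\Hom(-,\Ff)$ to a defining short exact sequence of $\Ff$ and track the Yoneda connecting maps.

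For the routine cases, $\Ff\cong \Oo_L^{\oplus 2}$ gives $4\cdot \Ext^1_Q(\Oo_L,\Oo_L)=4h^0(N_{L|Q})=12$; disjoint $\Ff\cong \Oo_{L_1}\oplus \Oo_{L_2}$ gives $3+3=6$ since the cross local Ext sheaves vanish; and both $\Ff\cong \Oo_C(p)$ with $C$ a smooth conic and $\Ff\cong \Oo_C$ with $C\in \Dd_2$ give $h^0(N_{C|Q})+h^1(\Oo_C)=6$, using Proposition~\ref{a1} for the conic and Lemma~\ref{a2} for the ribbon (noting that a line-bundle twist does not affect the local Ext sheaves). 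In the reducible case $\Ff=\Oo_{L_1}\oplus \Oo_{L_2}$ with $L_1\cap L_2=\{o\}$, a Koszul computation in local coordinates $(x,y,z)$ with $L_1=V(x,y)$ and $L_2=V(x,z)$ yields $\mathcal{E}xt^1_Q(\Oo_{L_1},\Oo_{L_2})\cong \CC_o$, so each cross term contributes $1$ and the total is $3+3+1+1=8$.

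The delicate remaining case, which is also the main obstacle, is when $\Ff$ is a non-trivial extension $0\to \Oo_D\to \Ff\to \CC_p\to 0$ with $D\in \Dd_1(L)$ and $p\in L$, equivalently, by Claim 2 in the proof of Proposition~\ref{class}, a non-trivial extension of $\Oo_L$ by $\Oo_L$ whose class $\xi$ lies in the distinguished $\PP^1\subset \PP(\Ext^1_Q(\Oo_L,\Oo_L))$ of Remark~\ref{ghj}. Since $\Ff$ is neither locally free on its support nor the structure sheaf of any subscheme, the clean description of the local Ext sheaves via $\wedge^\bullet N$ is unavailable. Using $N_{L|Q}\cong \Oo_L(1)\oplus \Oo_L$ and identifying the ribbon locus $\Dd_2(L)$ with the complement of $\PP H^0(\Oo_L(1))\subset \PP H^0(N_{L|Q})$, the class $\xi$ must lie in the $H^0(\Oo_L(1))$ summand, which forces the Yoneda wedge $\xi\wedge -\colon H^0(N_{L|Q})\to H^0(\det N_{L|Q})$ to have $2$-dimensional kernel (rather than the $1$-dimensional kernel arising in the ribbon case). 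Feeding this through the long exact sequences obtained by applying $\Hom(\Oo_L,-)$ and then $\Hom(-,\Ff)$ to $0\to \Oo_L\to \Ff\to \Oo_L\to 0$, and using Serre duality to control the remaining Yoneda maps, then produces $\Ext^1_Q(\Ff,\Ff)=8$ and explains the jump from $6$ to $8$ between the ribbon and non-locally Cohen--Macaulay situations.
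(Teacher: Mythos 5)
Your routine cases are fine, and they take a genuinely different (arguably cleaner) route than the paper: you use the local-to-global spectral sequence with $\mathcal{E}xt^q_Q(\Oo_C,\Oo_C)\cong\wedge^qN_{C|Q}$ for l.c.i.\ supports and a direct Koszul computation for the cross terms of two incident lines, whereas the paper runs everything through the restriction sequence of \cite[Lemma 13]{CCM} relating $\Ext^1_Q$ to $\Ext^1_Y$ for a surface $Y$ containing the support. Both give $12$, $6$, $6$, $6$ and $8$ in the respective cases, and your identification of the ribbon locus $\Dd_2(L)$ with the complement of $\PP H^0(\Oo_L(1))$ inside $\PP H^0(N_{L|Q})$ (so that the non-CM classes $\xi$ lie in the $H^0(\Oo_L(1))$ summand and $\xi\wedge-$ has $2$-dimensional kernel) is correct and consistent with Remark \ref{ghj} and Claim 2 of Proposition \ref{class}; it reproduces the paper's intermediate values $\dim\Ext^1_Q(\Oo_L,\Ff)=4$ and $\dim\Ext^2_Q(\Oo_L,\Ff)=3$.

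The gap is in the very last step of the delicate case. Feeding these numbers into the long exact sequence obtained from $\Hom(-,\Ff)$ applied to $0\to\Oo_L\to\Ff\to\Oo_L\to 0$ gives $\dim\Ext^1_Q(\Ff,\Ff)=4+\dim\ker\bigl(\partial\colon\Ext^1_Q(\Oo_L,\Ff)\to\Ext^2_Q(\Oo_L,\Ff)\bigr)$, so the value $8$ is equivalent to the vanishing of the connecting map $\partial$, which is Yoneda multiplication by $\xi$. Compatibility with the two exact sequences shows $\partial$ kills the image of $\Ext^1_Q(\Oo_L,\Oo_L)$ and lands in the $1$-dimensional subspace $\ker\bigl(\Ext^2_Q(\Oo_L,\Ff)\to\Ext^2_Q(\Oo_L,\Oo_L)\bigr)$, so a priori one only gets $7\le\dim\Ext^1_Q(\Ff,\Ff)\le 8$; whether the remaining rank-one component of $\partial$ vanishes depends on a mixed Yoneda product that Serre duality by itself does not compute, and your appeal to ``Serre duality to control the remaining Yoneda maps'' is not an argument. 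The paper closes exactly this gap differently: it only extracts the upper bound $\dim\Ext^1_Q(\Ff,\Ff)\le 8$ from its long exact sequences, and then obtains the matching lower bound from B\u{a}nic\u{a}'s upper semicontinuity of global $\Ext$ \cite[Theorem in page 21]{Banica}, using that $\Ff$ is a degeneration of $\Oo_{L_1}\oplus\Oo_{L_2}$ for a reduced reducible conic, where the dimension is already known to be $8$. You need either that semicontinuity/degeneration step or an explicit computation of the remaining Yoneda component to finish.
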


\begin{proof}
By \cite[Lemma 13]{CCM} we have the following exact sequence for a smooth subvariety $Y$, a coherent $\Oo_Q$-sheaf $\Ff$ and a coherent $\Oo_Y$-sheaf $\Gg$:
\begin{equation}\label{ggg}
\begin{split}
0&\to \Ext^1_Y(\Ff_{|Y},\Gg) \to \Ext^1_Q(\Ff, \Gg) \\ &\to \Hom_Y(\mathcal{T}or_1^Y(\Ff, \Oo_Y), \Gg) \to \Ext^2_Y(\Ff_{|Y}, \Gg).
\end{split}
\end{equation}
If $\Ff\cong \Gg \cong \Oo_C(p)$ for a smooth conic $C$ and a point $p\in C$, we get $\Ext^1_Q (\Ff, \Ff)\cong H^0(N_{C|Q})^{\oplus 2}$ and so its dimension is $6$. 

Let $\Ff \cong \Oo_{L_1}$ and $\Gg \cong \Oo_{L_2}$ for two lines $L_1$ and $L_2$. If $L:=L_1=L_2$, then we get $\Ext^1_Q(\Ff, \Gg) \cong H^0(N_{L|Q})$ whose dimension is $3$. If $L_1 \cap L_2=\emptyset$, then we get $\Ext^1_Q(\Ff, \Gg)=0$. If $L_1 \cap L_2=\{p\}$, then $\Ext^1_Q(\Ff, \Gg) \cong \Ext^1_{L_2}(\CC_p, \Oo_{L_2})$ and so its dimension is $1$. Using this we get for $\Ff=\Oo_{L_1}\oplus \Oo_{L_2}$ that 
$$
\dim \Ext^1_Q(\Ff, \Ff)=\left\{
                                           \begin{array}{lll}
                                             12, & \hbox{if $L_1=L_2$;}\\                                           
                                             8, & \hbox{if $|L_1 \cap L_2|=1$;}\\
                                             6, & \hbox{if $L_1 \cap L_2=\emptyset$.}
                                           \end{array}
                                         \right.\
$$  

Assume now that $\Ff \cong \Oo_C$ with $C\in \Dd_2$ and $L:=C_{\mathrm{red}}$. By Claim of Lemma \ref{a2}, we have $Q_1:=\langle C\rangle \cap Q$ is a smooth quadric surface. Assume that $C\in |\Oo_{Q_1}(2,0)|$. Apply the sequence (\ref{ggg}) to $Y=Q_1$ with $\Ff=\Gg=\Oo_C$ and then we have
\begin{equation}\label{666}
\begin{split}
0&\to \Ext^1_{Q_1} (\Oo_C, \Oo_C) \to \Ext^1_Q(\Oo_C, \Oo_C) \\ &\to \Hom_{Q_1}(\Oo_C(-1), \Oo_C)\to \Ext^2_{Q_1}(\Oo_C, \Oo_C).
\end{split}
\end{equation}
Note that the dimension of $\Hom_{Q_1}(\Oo_C(-1), \Oo_C)$ is $h^0(\Oo_C(1))=4$. Apply $\Ext^{\bullet}_{Q_1}(\Oo_C, -)$ to the standard exact sequence for $C \subset Q_1$ and then we get
\begin{align*}
0&\to \Hom_{Q_1}(\Oo_C, \Oo_C) \to \Ext^1_{Q_1}(\Oo_C, \Oo_{Q_1}(-2,0))\\
 &\to \Ext^1_{Q_1}(\Oo_C, \Oo_{Q_1}) \to \Ext^1_{Q_1}(\Oo_C, \Oo_C)\to \Ext^2_{Q_1}(\Oo_C, \Oo_{Q_1}(-2,0))\\
 &\to \Ext^2_{Q_1}(\Oo_C, \Oo_{Q_1}) \to \Ext^2_{Q_1}(\Oo_C, \Oo_C) \to 0.
\end{align*}
Note that $\Ext_{Q_1}^1(\Oo_C, \Oo_{Q_1}(-2,0)) \cong H^1(\Oo_C(0,-2))^\vee \cong H^0(\Oo_C(2,0))\cong H^0(\Oo_C)$ and so its dimension is $2$. Similarly we get that 
$$\dim \Ext^1_{Q_1}(\Oo_C, \Oo_{Q_1})=2~~\text{ and } ~~\dim \Ext^2_{Q_1}(\Oo_C, \Oo_{Q_1}(-2,0))=0.$$ 
Since the dimension of $\Hom_{Q_1}(\Oo_C ,\Oo_C)$ is at least $2$, it is indeed $2$ and so we get $\dim \Ext^1_{Q_1}(\Oo_C, \Oo_C)=2$. Since $\Ext^2(\Oo_C, \Oo_{Q_1})=0$ by the Serre duality, so we get $\Ext^2_{Q_1}(\Oo_C, \Oo_C)=0$. Hence we get $\dim \Ext^1_Q(\Oo_C, \Oo_C)=6$ by the sequence (\ref{666}). 

Let $\Ff$ be the non-trivial extension of $\CC_p$ by $\Oo_D$ with $D\in \Dd_1(L)$ and $p\in L$. Now we get the following two sequences  for $\Ii_D$: (\ref{ci}) and 
\begin{equation}\label{c}
0\to \Ii_D \to \Oo_Q \to \Oo_D \to 0,
\end{equation}
Apply $\Ext^{\bullet}_Q(\Oo_L, -)$-functor to (\ref{ci}) and then we get
\begin{equation}
0\to \Ext^1_Q(\Oo_L, \Ii_D) \to \Ext^2_Q(\Oo_L, \Oo_Q(-2)) \to \Ext^2_Q(\Oo_L, \Oo_Q(-1)^{\oplus 2}) \to \Ext^2_Q(\Oo_L, \Ii_D) \to 0
\end{equation}
since $\Ext^1_Q(\Oo_L, \Oo_Q(-1)^{\oplus 2})=\Ext^3_Q(\Oo_L, \Oo_Q(-2))=0$. Now we get $\Ext^1_Q(\Oo_L, \Oo_Q(-2))=0$ and $\Ext^2_Q(\Oo_L, \Oo_Q(-1)) \cong H^1(\Oo_L(-2))^\vee$. Thus we have 
$$
\dim \Ext^i_Q(\Oo_L, \Ii_D)=\left\{
                                           \begin{array}{ll}
                                            0, & \hbox{if $i\ne 2$,}\\                                           
                                             2, & \hbox{if $i=2$.}
                                           \end{array}
                                         \right.\
$$
Apply $\Ext^{\bullet}_Q(\Oo_L, -)$-functor to (\ref{c}) and then we get
\begin{equation}\label{e}
0\to \Ext^1_Q(\Oo_L, \Oo_D) \to \Ext^2_Q(\Oo_L, \Ii_D) \to \Ext^2_Q(\Oo_L, \Oo_Q) \to \Ext^2_Q(\Oo_L, \Oo_D) \to 0
\end{equation}
since $\Ext^1_Q(\Oo_L, \Oo_Q) \cong H^2(\Oo_L(-3))^\vee=0$. Note that $\Ext^2_Q(\Oo_L, \Oo_Q) \cong H^1(\Oo_L(-3))^\vee$ and so its dimension is $2$. We have $\Ext^2_Q(\Oo_L, \Oo_D) \cong \Ext^1_Q(\Oo_D, \Oo_L(-3))^\vee$ and it admits the following exact sequence
$$0\to \Ext^1_L(\Oo_D \otimes \Oo_L, \Oo_L(-3)) \to \Ext^1_Q(\Oo_D, \Oo_L(-3)) \to \Hom_L(\mathcal{T}or_1^Q(\Oo_D, \Oo_L), \Oo_L(-3)). $$
In particular, the dimension of $\Ext^1_Q(\Oo_D, \Oo_L(-3))$ is at least $\dim \Ext^1_L(\Oo_D \otimes \Oo_L, \Oo_L(-3))=2$ since we have $\Oo_D \otimes \Oo_L \cong \Oo_L$. Hence from (\ref{e}) we get
$$
\dim \Ext^i_Q(\Oo_L, \Oo_D)=\left\{
                                           \begin{array}{ll}
                                             2, & \hbox{if $i=1,2$,}\\                                           
                                             0, & \hbox{otherwise.}
                                            \end{array}
                                         \right.\
$$
Now we get two exact sequences (\ref{ext11}) and (\ref{b}). Apply $\Ext^{\bullet}_Q(\Oo_L, -)$-functor to (\ref{ext11}) to get 
\begin{align*}
0&\to \Ext^1_Q(\Oo_L, \Oo_D) \to \Ext^1_Q(\Oo_L, \Ff) \to \Ext^1_Q(\Oo_L, \CC_p) \\
&\to \Ext^2_Q(\Oo_L, \Oo_D) \to \Ext^2_Q(\Oo_L, \Ff)\to \Ext^2_Q(\Oo_L, \CC_p) \to 0,
\end{align*}
since $\Hom_Q(\Oo_L, \Ff) \cong \Hom_Q(\Oo_L, \CC_p)$ with dimension $1$. We also get the exact sequence
$$0\to \Ext^1_L(\Oo_L, \CC_p) \to \Ext^1_Q(\Oo_L, \CC_p) \to \Hom_L(\mathcal{T}or_1^Q(\Oo_L, \Oo_L), \CC_p)\to 0,$$
which gives $\Ext^1_Q(\Oo_L, \CC_p) \cong \Hom_L(N_{L|Q}^\vee, \CC_p)$. Thus we get $\dim \Ext^1_Q(\Oo_L, \CC_p)=2$. 

Recall that $\Ff_{|L} \cong \Oo_L\oplus \CC_p$ by Remark \ref{ghj} and so from the exact sequence
$$0\to \Ext^1_L (\Ff_{|L}, \Oo_L(-3)) \to \Ext^1_Q(\Ff, \Oo_L(-3)) \to \Hom_L(\mathcal{T}or_1^Q(\Ff, \Oo_L), \Oo_L(-3)) \to 0,$$
we get $\dim \Ext^1_Q(\Ff, \Oo_L(-3))=3+\dim \Hom_L(\mathcal{T}or_1^Q(\Ff, \Oo_L), \Oo_L(-3))$. Here we get
$$\mathcal{T}or_1^Q(\Ff, \Oo_L) \cong \mathcal{T}or_1^Q(\Ff_{|L} , \Oo_L) \cong \mathcal{T}or_1^Q(\Oo_L, \Oo_L) \cong N_{L|Q}^\vee$$
since $\mathcal{T}or_1^Q(\CC_p, \Oo_L)$ is zero. Since $\Hom_L(N_{L|Q}^\vee, \Oo_L(-3))$ is trivial and $\Ext^2_Q(\Oo_L, \Ff) \cong \Ext^1_Q(\Ff ,\Oo_L(-3))^\vee$, so we get $\dim \Ext^2_Q(\Oo_L, \Ff)=3$. Hence we have
$$
\dim \Ext^i_Q(\Oo_L, \Ff)=\left\{
                                           \begin{array}{llll}
                                             1, & \hbox{if $i=0$,}\\                                           
                                             4, & \hbox{if $i=1$,}\\
                                             3, & \hbox{if $i=2$,}\\
                                             0, & \hbox{otherwise}
                                            \end{array}
                                         \right.\
$$
Now apply $\Ext^{\bullet}_Q(-, \Ff)$-functor to (\ref{b}) and then we get
\begin{align*}
0&\to \Ext^1_Q(\Oo_L, \Ff) \to \Ext^1_Q(\Ff, \Ff) \to \Ext^1_Q(\Oo_L, \Ff)\\
&\to \Ext^2_Q(\Oo_L, \Ff) \to \Ext^2_Q(\Ff, \Ff) \to \Ext^2_Q(\Oo_L, \Ff)\to 0,
\end{align*}
since $\dim \Hom_Q(\Oo_L, \Ff)=1$ and $\dim \Hom_Q(\Ff, \Ff)=2$. Hence $\dim \Ext^1_Q(\Ff, \Ff)$ is at most $8$. Note that any curve in $\GG_1$ is a specialization of curves in $\GG_2 \setminus \GG_1$ and so $\Ff$ can be considered as a degeneration of $\Oo_{L_1}\oplus \Oo_{L_2}$ for a reducible and reduced conic $L_1 \cup L_2$. By \cite[Theorem in page 21]{Banica} in case of $N=n=1$, the global $\Ext^1$-function is upper semi-continuous. Since $\dim \Ext^1_Q(\Gg, \Gg)=8$ for $\Gg=\Oo_{L_1}\oplus \Oo_{L_2}$, so $\dim \Ext^1_Q(\Ff, \Ff)$ is at least $8$. Hence the dimension is indeed $8$. 
\end{proof}

\begin{remark}
For a sheaf $\Ff\cong \Oo_C(p)$ with a smooth conic $C$ and $p\in C$, we have $\Ext^2_Q(\Oo_C(p), \Oo_C(p)) \cong  \Ext^1_Q(\Oo_C(p), \Oo_C(p) \otimes \Oo_Q(-3))^\vee$. Applying the sequence (\ref{ggg}) to $\Ff=\Oo_C(p)$ and $\Gg=\Oo_C(p)\otimes \Oo_Q(-3)$, we get
$$0\to \Ext^1_C(\Ff, \Ff (-3))\to \Ext^1_Q(\Ff, \Ff\otimes \Oo_Q(-3)) \to H^0(N_{C|Q}(-3))\to 0.$$
Since $h^0(N_{C|Q}(-3))=0$ and $\dim \Ext^1_C(\Ff, \Ff(-3))=5$, we get $\dim \Ext^2_Q(\Ff, \Ff)=5$. 
\end{remark}

\begin{corollary}
The moduli $\mathbf{M}(2,2)$ has the two irreducible components $\mathfrak{M}_1$ and $\mathfrak{M}_2$ such that 
\begin{itemize}
\item $\mathfrak{M}_{1,\mathrm{red}}$ is parametrized by $\GG_3=\mathrm{Gr}(3,5)$,
\item $\mathbf{M}^s(2,2)\subset \mathfrak{M}_1$ consisting of stable sheaves, is isomorphic to $\GG_3\setminus \GG_2$,
\item $\mathfrak{M}_2$ consists only of strictly semi-stable sheaves $[\Oo_{L_1}\oplus \Oo_{L_2}]$ with two lines $L_1, L_2$ on $Q$,
\item $(\mathfrak{M}_1 \cap \mathfrak{M}_2)_{\mathrm{red}}$ is parametrized by $\GG_2$, consisting of the equivalence classes $[\Oo_{L_1}\oplus \Oo_{L_2}]$ with two intersecting lines $L_1, L_2\subset Q$.
\end{itemize}
\end{corollary}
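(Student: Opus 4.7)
The plan is to assemble the S-equivalence classes identified in Corollary \ref{class+} into two algebraic families and check they give the claimed components. Define $\mathfrak{M}_1$ to be the closure in $\mathbf{M}(2,2)$ of the stable locus $\mathbf{M}^s(2,2)$, and $\mathfrak{M}_2$ to be the closed locus of S-equivalence classes of the form $[\Oo_{L_1}\oplus \Oo_{L_2}]$.

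First I would show $\mathbf{M}^s(2,2)\cong \GG_3\setminus \GG_2$. By Corollary \ref{class+}, every stable $\Ff$ is of the form $\Oo_C(p)$ with $C$ a smooth conic and $p\in C$. Because a smooth conic is isomorphic to $\PP^1$, the degree-one line bundle on it is unique up to isomorphism, so $\Oo_C(p)$ depends only on $C$; this yields a bijective morphism $\phi: \GG_3\setminus \GG_2 \to \mathbf{M}^s(2,2)$. Lemma \ref{123} gives $\dim \Ext^1_Q(\Ff,\Ff)=6$ at every stable $\Ff$, matching $\dim(\GG_3\setminus \GG_2)=6$, so $\mathbf{M}^s(2,2)$ is smooth of dimension $6$ and $\phi$ is an isomorphism. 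Extending $\phi$ across the boundary $\GG_2$: when a smooth conic degenerates to $L_1\cup L_2$ with $L_1\cap L_2\ne \emptyset$ or to a double line in $\Dd_1$, the sheaf $\Oo_C(p)$ degenerates to a strictly semistable sheaf whose Jordan--H\"older filtration has factors $\Oo_{L_1}$ and $\Oo_{L_2}$ (as controlled by the exact sequences in the proof of Proposition \ref{class} and Remark \ref{ghj}), hence its S-equivalence class is $[\Oo_{L_1}\oplus \Oo_{L_2}]\in \mathfrak{M}_2$. This extends $\phi$ to a morphism $\GG_3\to \mathbf{M}(2,2)$ whose image identifies $\mathfrak{M}_{1,\mathrm{red}}$ set-theoretically with $\GG_3$.

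Next I would identify $\mathfrak{M}_2$ with $\mathrm{Sym}^2(\PP^3)$ via the map $\{[L_1],[L_2]\}\mapsto [\Oo_{L_1}\oplus \Oo_{L_2}]$, using the identification $\mathbf{M}(1,1)\cong \PP^3$ from the Remark in Section $2$. This map is bijective on points, and Lemma \ref{123} gives $\dim \Ext^1=6$ on pairs of disjoint lines, matching $\dim \mathrm{Sym}^2(\PP^3)=6$. The intersection $(\mathfrak{M}_1\cap \mathfrak{M}_2)_{\mathrm{red}}$ consists of those S-classes which arise both as limits of stable $\Oo_C(p)$ and as polystable $\Oo_{L_1}\oplus \Oo_{L_2}$; by the degeneration analysis above this is exactly the image of $\GG_2\subset \GG_3$, namely the S-classes $[\Oo_{L_1}\oplus \Oo_{L_2}]$ of pairs of intersecting lines.

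The main obstacle I anticipate is the careful bookkeeping between set-theoretic and scheme-theoretic statements. Lemma \ref{123} yields $\dim \Ext^1=8$ at points of $\mathfrak{M}_1\cap \mathfrak{M}_2$, so the jump in tangent-space dimension along the intersection forces $\mathfrak{M}_1$ to be non-reduced or singular there and explains the appearance of $\mathfrak{M}_{1,\mathrm{red}}$ in the statement rather than $\mathfrak{M}_1$ itself. Irreducibility of the two components follows from that of $\GG_3$ and $\mathrm{Sym}^2(\PP^3)$, and that no further components appear follows from the fact that Corollary \ref{class+} already enumerates every S-equivalence class. Rationality is immediate since $\mathrm{Gr}(3,5)$ is rational and $\mathrm{Sym}^2(\PP^3)$ is rational as the symmetric square of a rational variety.
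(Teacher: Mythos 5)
Your proposal is correct and follows essentially the route the paper intends: the paper states this corollary without a separate proof, as a direct consequence of the classification in Proposition \ref{class} and Corollary \ref{class+} together with the $\Ext^1$ computations of Lemma \ref{123}, which are exactly the ingredients you assemble. The only point stated more loosely than ideal is using $\dim\Ext^1_Q(\Ff,\Ff)$ as the tangent space at strictly semistable (polystable) points, but this does not affect the set-theoretic claims of the corollary.
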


\begin{lemma}
We have 
$$\mathbf{M}(2,2)^{\infty} \cong \mathbf{M}(2,2)^{\alpha} \cong \mathbf{M}(2,2)^{0+}$$
for all $\alpha>0$. 
\end{lemma}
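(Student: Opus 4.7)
The plan is to show that no wall-crossing occurs on the open ray $(0,\infty)$ of $\alpha$; by \cite[Theorem 4.2]{He} this will force $\mathbf{M}^{\alpha}(2,2)$ to be constant as a function of $\alpha>0$, so that the two boundary moduli $\mathbf{M}^{0+}(2,2)$ and $\mathbf{M}^{\infty}(2,2)$ agree with every interior $\mathbf{M}^{\alpha}(2,2)$. Following the template used for $\chi=1$ in the proof of Proposition~\ref{a1}, I suppose for contradiction that some pair $(1,\Ff)$ is strictly $\alpha$-semistable for an $\alpha>0$, witnessed by a proper non-zero subsheaf $\Ff'\subset \Ff$ with Hilbert polynomial $\chi_{\Ff'}(t)=\mu' t+\chi'$ achieving equality
$$
\frac{\chi'+\delta\alpha}{\mu'}=\frac{2+\alpha}{2},
$$
where $\delta=1$ if $s$ factors through $\Ff'$ and $\delta=0$ otherwise. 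Purity of $\Ff$ forces every non-zero subsheaf to be pure of dimension $1$, so $\mu'\in\{1,2\}$.

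The case $\mu'=2$ disposes immediately: $\Ff/\Ff'$ is non-zero and $0$-dimensional, so $\chi'\le 1$, but the displayed equation requires $\chi'=2+(1-\delta)\alpha\ge 2$, a contradiction for both $\delta\in\{0,1\}$. For $\mu'=1$, purity gives $\Ff'\cong \Oo_{L'}(c-1)$ for a line $L'\subset Q$ with $c:=\chi'$, and the equation becomes $c=1+(1-2\delta)\alpha/2$. If $\delta=1$ then $\Ff'$ inherits a non-zero section from $s$, so $c\ge 1$; but $c=1-\alpha/2<1$, a contradiction.

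The main obstacle is the remaining case $\delta=0$, $\mu'=1$: here $c=1+\alpha/2\ge 2$ and the induced map $\bar{s}:\Oo_Q\to\Ff/\Ff'=:\Ff''$ is non-zero. Since $\Ff''$ may fail to be pure, I let $\Tt\subset \Ff''$ be the maximal $0$-dimensional subsheaf with $k:=\chi(\Tt)\ge 0$; the pure quotient $\Ff''/\Tt$ is non-zero (its Hilbert polynomial has positive degree in $t$) and is supported on a single line $L''$, hence $\Ff''/\Tt\cong \Oo_{L''}(1-c-k)$. If $\bar{s}$ does not factor through $\Tt$, then it descends to a non-zero section of $\Oo_{L''}(1-c-k)$, forcing $c\le 1-k\le 1$, absurd. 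Otherwise $s$ factors through the preimage $\Gg\subset \Ff$ of $\Tt$, which contains $\Ff'$ with $\Gg/\Ff'\cong \Tt$ and hence has Hilbert polynomial $t+c+k$; applying $\alpha$-semistability to $\Gg$ with $\delta=1$ yields $c+k+\alpha\le (2+\alpha)/2$, which combined with $c\ge 2$, $k\ge 0$ and $\alpha>0$ is impossible. Ruling out every case proves the lemma.
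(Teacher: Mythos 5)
Your proof is correct and follows essentially the same route as the paper: rule out strictly $\alpha$-semistable pairs by extracting the destabilizing subpair and deriving a contradiction from the section in each of the cases $\delta=0,1$. In fact your argument is more careful than the paper's, which tacitly assumes the destabilizing subsheaf has multiplicity $\mu'=1$ and that the quotient is a line bundle on a line; your treatment of the case $\mu'=2$ and of a possibly impure quotient (via the maximal $0$-dimensional subsheaf $\Tt$ and the preimage $\Gg$) fills in exactly the steps the paper leaves implicit.
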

\begin{proof}
As in Proposition \ref{a1}, let $(1,\Ff)\in \mathbf{M}^{\alpha}(2,2)$ be a strictly $\alpha$-semistable pair and so it has a subpair $(s,\Ff')$ with $\chi_{\Ff'}(t)=t+c$ such that $c+\delta \cdot \alpha=\frac{2+\alpha}{2}$. In particular we have $\Ff' \cong \Oo_{L'}(c-1)$ for a line $L'\subset Q$. If $\delta=1$, then we have $2c+\alpha=2$. But since $\Ff'$ has a non-zero section, so we have $c\ge 1$, a contradiction. If $\delta=0$, then we have $c=\frac{2+\alpha}{2}$. But the quotient pair $(1,\Ff'')$ has $\chi_{\Ff''}(t)=t+1-c$. Since $\Ff'' \cong \Oo_{L''}(-c)$ with a line $L''$, so we have $c\le 0$, a contradiction. Thus there is no wall-crossing among $\mathbf{M}^{\alpha}(2,2)$'s.
\end{proof}

\begin{proposition}
We have $\mathbf{M}^{\infty}(2,2)=\mathfrak{N}_1 \cup \mathfrak{N}_2$ with two irreducible components $\mathfrak{N}_1$ and $\mathfrak{N}_2$ of dimension $7$ and $6$ respectively such that 
\begin{itemize}
\item $\mathfrak{N}_1$ and $\mathfrak{N}_2$ are rational, 
\item $\mathbf{M}^{\infty}(2,2)$ is smooth outside $\mathfrak{N}_1 \cap \mathfrak{N}_2$, and 
\item $(\mathfrak{N}_1\cap \mathfrak{N}_2)_{\mathrm{red}}$ is of dimension $5$ and it consists of stable pairs $(1,\Ff)$ where $C_{\Ff}\in \GG_2$. 
\end{itemize} 
\end{proposition}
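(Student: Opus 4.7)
The plan is to invoke the preceding lemma to identify $\mathbf{M}^\infty(2,2)$ with $\mathbf{M}^{0+}(2,2)$, so an $\alpha$-stable pair $(1,\Ff)$ at small $\alpha$ is a Gieseker semistable sheaf $\Ff$ (classified in Proposition \ref{class} and Corollary \ref{class+}) together with a non-zero section $\Oo_Q\to\Ff$ whose image is not contained in any slope-one destabilizing subsheaf $\Oo_L\subset\Ff$. I would define the two candidate components as
\[
\mathfrak{N}_1 := \overline{\{(1,\Oo_C(p))\,:\,C\in\GG_3\setminus\GG_2,\ p\in C\}},\quad
\mathfrak{N}_2 := \overline{\{(1,\Oo_{L_1}\oplus\Oo_{L_2})\,:\,L_1\ne L_2,\ L_1\cap L_2=\emptyset\}},
\]
and verify that every $\alpha$-stable pair lies in one of them, with the case~(ii) extension sheaves of Proposition \ref{class} appearing as flat limits via Lemma \ref{a2}(iii) and the uniqueness from Lemma \ref{aaa0}.

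For $\mathfrak{N}_1$, a non-zero section of $\Oo_C(p)\cong\Oo_{\PP^1}(1)$ on a smooth conic $C$ is determined, up to scalar, by its unique zero $q\in C$, so the open stable stratum is bijectively parametrized by the incidence variety $\II_1\subset \GG_3\times Q$ restricted to smooth conics. Since $\II_1\to Q$ is a $\mathrm{Gr}(2,4)$-fibration over the rational base $Q$, $\II_1$ is rational of dimension $7$, giving $\dim\mathfrak{N}_1=7$. For $\mathfrak{N}_2$, the action of $\Aut(\Ff)=(\CC^*)^2$ together with scaling on the section produces a unique stable pair for each unordered disjoint pair of lines, and so the open stratum is an open subset of $\mathrm{Sym}^2(\PP^3)$, rational of dimension~$6$.

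For the intersection I would show that $(1,\Ff)\in\mathfrak{N}_1\cap\mathfrak{N}_2$ forces $C_\Ff\in\GG_2$. Letting two disjoint lines in the $\mathfrak{N}_2$-family collide to a pair meeting at one point $p$ keeps the cokernel of the section map equal to $\CC_p$, producing a stable pair with $C_\Ff=L_1\cup L_2\in\GG_2\setminus\GG_1$; simultaneously, a degeneration $(C,q)\in\II_1$ in which $C$ becomes $L_1\cup L_2$ with $q$ staying smooth on one component yields an S-equivalent pair on the $\mathfrak{N}_1$ side. The forgetful map $(\mathfrak{N}_1\cap\mathfrak{N}_2)_{\mathrm{red}}\to \GG_2$ is therefore generically one-to-one, giving dimension $5$; the smaller stratum over $\Dd_1\subset\GG_1$ comes from the unique non-split extensions $0\to\Oo_D\to\Ff\to\CC_p\to 0$ of Lemma \ref{aaa0}, with $\Ff|_L\cong\Oo_L\oplus\CC_p$ as in Remark \ref{ghj}.

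Smoothness outside the intersection reduces to the tangent-obstruction theory of pairs: the Zariski tangent space to $\mathbf{M}^\infty(2,2)$ at $(s,\Ff)$ is the first hyper-$\Ext$ of the two-term complex $[\Oo_Q\xrightarrow{s}\Ff]$, which combines $\Ext^1(\Ff,\Ff)$ from Lemma \ref{123} with the section-deformation contribution. For $\Ff=\Oo_C(p)$ with $C$ smooth, $\dim\Ext^1(\Ff,\Ff)=6$ together with a one-dimensional section deformation gives tangent dimension $7$; for $\Ff=\Oo_{L_1}\oplus\Oo_{L_2}$ with $L_1\cap L_2=\emptyset$, the analogous count yields $6$. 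The main obstacle I expect is the intersection analysis: carefully tracking simultaneous flat limits of both the sheaf and the section, identifying S-equivalent pairs across the two families, and showing that no additional irreducible component arises from the strictly-semistable case~(ii) sheaves.
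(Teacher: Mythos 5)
Your proposal is correct and follows essentially the same route as the paper: it identifies $\mathbf{M}^{\infty}(2,2)$ with $\mathbf{M}^{0+}(2,2)$ via the no-wall-crossing lemma, runs a case analysis over the classification in Proposition \ref{class}, defines $\mathfrak{N}_1$ and $\mathfrak{N}_2$ as the closures of the smooth-conic and skew-lines strata with the same birational models $\II_1$ and $\mathrm{Sym}^2(\PP^3)$, places the strictly degenerate sheaves over $\GG_2$ in the intersection, and proves smoothness by combining the $\Ext^1(\Ff,\Ff)$ computations of Lemma \ref{123} with the section-deformation term (the paper's version of your hyper-$\Ext$ sequence is \cite[Corollary 1.6]{He}). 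The only slip is that for two disjoint lines the cokernel of the section map is zero rather than $\CC_p$ (it becomes $\CC_p$ only in the limit where the lines meet), but this does not affect the argument.
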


\begin{proof}
Let $\Lambda:=(1,\Ff)$ be a stable pair in $\mathbf{M}^{\infty}(2,2)$ and then $\Ff$ is semistable as an element of $\mathbf{M}(2,2)$, since $(1,\Ff)$ is also in $\mathbf{M}^{0+}(2,2)$. In Proposition \ref{class} we have the list of semistable sheaves and in particular we have $h^1(\Ff)=0$. Note that the tangent space at $[\Lambda]$ is isomorphic to $\Ext^1_Q(\Lambda, \Lambda)$ and the obstruction space is $\Ext^2_Q(\Lambda, \Lambda)$.  By \cite[Corollary 1.6]{He}, we get the following exact sequence
\begin{equation}\label{r12}
\begin{split}
0&\to \Hom_{Q}(\Lambda, \Lambda ) \to \Hom_Q(\Ff, \Ff) \to \Hom_{\CC}(\langle 1 \rangle , H^0(\Ff)/\langle 1 \rangle)\\
& \to \Ext^1_Q(\Lambda, \Lambda) \to \Ext^1_Q (\Ff, \Ff) \to 0.
 \end{split}
\end{equation}
and an isomorphism $\Ext^2_Q(\Lambda, \Lambda)\cong \Ext^2_Q(\Ff, \Ff)$. 

Take $\Ff\cong \Oo _L^{\oplus 2}$. Since $H^0(\Ff )= H^0(L,\Oo _L^{\oplus 2})$, so any nonzero section $s\in H^0(\Ff)$ may be seen as a non-zero map $\sigma: \Oo _L\to \Ff$. The sheaf $\sigma(\Oo _L)$ is a subsheaf of $\Ff$ obviously isomorphic to $\Oo _L$ and the pair $(s,\sigma (\Oo _L))$ shows that $(s,\Ff)$ is not $\alpha$-semistable for any $\alpha>0$.

\quad{(a1)} If $\Ff \cong \Oo_C(p)$ for a smooth conic $C$ and $p\in C$, a pair $(s,\Ff)$ for any non-zero section $s\in H^0(\Oo_C(p))$ is $\alpha$-stable. In Lemma \ref{123}, we observed that $\dim \Ext^1_Q(\Ff, \Ff)=6$ and so we get $\dim \Ext^1_Q(\Lambda, \Lambda)=7$ by (\ref{r12}). Since the family of such pairs is $7$-dimensional, so $\mathbf{M}^{\infty}(2,2)$ is smooth at $(1,\Ff)$. 

\quad{(a2)} Take $\Ff \cong \Oo_{L_1}\oplus \Oo_{L_2}$ with two skew lines $L_1, L_2 \subset Q$. As in (d), any section $s=(s_1, s_2) \in H^0(\Ff)$ with nonzero $s_1$ and $s_2$, defines $\alpha$-stable pair $(s,\Ff)$ for any $\alpha>0$. Note that $\dim\Hom_Q(\Lambda, \Lambda)=1$ and $\dim \Hom_Q(\Ff, \Ff)=2$. Since $\dim \Ext^1_Q(\Ff, \Ff)=6$ by Lemma \ref{123}, we have $\dim \Ext^1_Q(\Lambda, \Lambda)=6$. Since the family of two skew lines on $Q$ is $6$-dimensional, so $\mathbf{M}^{\infty}(2,2)$ is smooth at $(1,\Ff)$.

Let $\mathfrak{N}_1$ be the closure of stable pairs of type (a1) and $\mathfrak{N}_2$ be the closure of stable pairs of type (a2). By (a1) and (a2), we get that $\mathfrak{N}_1$ and $\mathfrak{N}_2$ are two different irreducible components of $\mathbf{M}^{\infty}(2,2)$.

\quad{(b)} Take $\Ff \cong \Oo _C$ with $C\in \Dd_2$. We have $\Ff \not \cong \Oo _L^{\oplus 2}$ as an $\Oo _Q$-sheaf, because it is not an $\Oo _L$-sheaf. In particular $\Ee$ is semistable and indecomposable. By (\ref{equuu1}) we have $h^0(\Ff )=2$, $h^1(\Ff )=0$ and there is a $1$-dimensional linear subspace $V\subset H^0(\Ff)$ associated to the inclusion $\Oo _L\to \Oo _C$
in (\ref{equuu1}). For any $s\in V\setminus \{0\}$, the inclusion $(s,\Oo_L)\to (s,\Ff)$ shows that $(s,\Ff)$ is not $\alpha$-semistable for any $\alpha>0$. Now take $s\in H^0(\Ff )\setminus V$.

\quad {\emph {Claim}}: $(s,\Ff)$ is $\alpha$-stable.

\quad {\emph {Proof of Claim}}: Assume that $(s,\Ff)$ is not $\alpha$-stable and take a proper subpair $(s',\Gg)$ with $\mu _\alpha (s',\Gg )\geq \mu _\alpha (s,\Ff )$. We have $\Gg \cong \Oo _L(a)$ for some $a\in \ZZ$. Since $s' \ne 0$, we get $a\ge 0$. Since (\ref{equuu1}) does not split as an $\Oo _Q$-sheaf, we get that the inclusion $\Gg \to \Ff$ is the one induced by (\ref{equuu1}) and hence $s'\in V$, a contradiction. \qed

Now for each $\alpha$-stable pair $\Lambda=(s,\Ff)$, we have $\dim \Hom_Q (\Lambda, \Lambda) =1$ and $\dim \Hom_Q(\Ff, \Ff)=2$. Thus we get $\dim \Ext^1_Q(\Lambda, \Lambda)=6$. Note that any two suitable sections of $\Ff$ define one isomorphism class of stable pairs $(1,\Ff)$ and so the family of such pairs is $5$-dimensional, because the dimension of $\Dd_2$ is $5$. Thus such pairs are contained in $\mathfrak{N}_2$ and $\mathfrak{N}_2$ is smooth at $(1,\Ff)$. 

\quad{(c)} Let $\Ff \cong \Oo_{L_1} \oplus \Oo_{L_2}$ with $D=L_1 \cup L_2$ a reducible conic and the singular point $p$ and then we have $h^0(\Ff)=2$. Let $s=(s_1, s_2) \in H^0(\Oo_{L_1}\oplus \Oo_{L_2})$ be a non-zero section. If $s_1=0$, then the section $(0,s_2)$ factors through $\Oo_{L_2}$ and it destabilize $(s,\Ff)$. Similarly we get $s_2 \ne 0$. If both $s_1$ and $s_2$ are nonzero, then $s$ gives an automorphism of $\Ff$ and so it does not factor through any proper subsheaf. In particular, $(s ,\Ff)$ is $\alpha$-stable for any $\alpha>0$. Note that $\dim \Hom_Q(\Ff, \Ff)=2$ and $\dim \Ext^1_Q(\Ff, \Ff)=8$. Since $\dim \Hom_Q (\Lambda, \Lambda)=1$, so we have $\dim \Ext_Q^1(\Lambda, \Lambda)=8$ by (\ref{r12}). Note that the family of this type of pairs is $5$-dimensional and it is contained in $(\mathfrak{N}_1 \cap \mathfrak{N}_2)_{\mathrm{red}}$ due to Lemma \ref{deg}.

\quad{(d)} Let $\Ff$ be the non-trivial extension of $\CC_p$ by $\Oo_D$ with $D\in \Dd_1(L)$ and $p\in L$. We get $h^0(\Ff)=2$ and there exists a unique section $s$ up to constant, inducing an extension (\ref{b}). For any section section $s' \in H^0(\Ff)\setminus \langle s \rangle$, we get an exact sequence (\ref{ext11}) and it gives a stable pair $\Lambda:=(s', \Ff)$. Note that $\Hom_Q(\Ff, \Ff)=2$ and $\dim \Ext^1_Q(\Ff, \Ff)=8$ by Lemma \ref{123}. Since $\dim \Hom_Q(\Lambda, \Lambda)=1$, so we have $\dim \Ext^1_Q(\Lambda, \Lambda)=8$. Note that the family of this type of pairs is $4$-dimensional and it is contained in $(\mathfrak{N}_1 \cap \mathfrak{N}_2)_{\mathrm{red}}$ due to Lemma \ref{deg}. 

Now $\mathfrak{N}_2$ is birational to $\mathrm{Sym}^2(\PP^3)$ and so it is a rational variety of dimension $6$. $\mathfrak{N}_1$ is birational to $\II_1$, since its general point $\Oo_C(p)$ with a smooth conic $C$ and $p\in C$ is determined by the pair $(C,p)$. The incidence variety $\II_1$ is a rational variety of dimension $7$, since $\II_1$ is isomorphic to the Grassmannian bundle $\mathrm{Gr}(2,T\PP^4(-1)_{|Q})$ and $T\PP^4(-1)_{|Q}$ is trivial on a nonempty subset of $Q$. 
\end{proof}

\begin{remark}
The forgetful map $\phi : \mathbf{M}^{\infty}(2,2) \to \mathbf{M}(2,2)$ consists of two morphisms $\phi_i : \mathfrak{N}_i \to \mathfrak{M}_i$ for $i=1,2$ that are surjective. 
\end{remark}

Recall that we have a morphism $\psi: \Hh_1 \to \mathbf{Hilb}(2,1)\times Q$ sending $[C]\in \Hh_1$ to $(D,p)$ with the exact sequence 
$$0\to \CC_p \to \Oo_C \to \Oo_D \to 0,$$
where $D$ is the maximal locally CM subscheme of $C$ with pure dimension $1$. Letting $\pi_1: \II_1 \to \mathbf{Hilb}(2,1)$ be the projection to $1^{\mathrm{st}}$-factor, we obtain the following diagram as a relation between $\mathbf{Hilb}(2,2)$ and $\mathbf{M}(2,2)$.
\begin{equation}
\xymatrix{
\mathbf{Hilb}(2,2)\supset \Hh_1&~~~~\psi^{-1}(\II_1)_{\mathrm{red}}  \ar@_{(->}[l]\ar[d]_{\psi} \\
&\II_1 \ar[d]_{\pi_1} \ar@{<-->}[r]& \mathfrak{N}_1\ar[d]^{\phi_1}\ar@^{(->}[r] & \mathbf{M}^{\infty}(2,2)\ar[d]^{\phi}\\
&\mathbf{Hilb}(2,1) \ar[r]^{\mathrm{red}} &\mathfrak{M}_1 ~~~~\ar@^{(->}[r] &\mathbf{M}(2,2)
}
\end{equation}
By Proposition \ref{aa22}, each fibre of $\psi$ is isomorphic to $\PP^1$ and fibres of $\pi_1$ over $\GG_3\setminus \GG_2$ are again isomorphic to $\PP^1$. Thus $\pi_1\circ \psi : \psi^{-1}(\II_1)_{\mathrm{red}} \to \GG_3 \setminus \GG_2$ is a $(\PP^1 \times \PP^1)$-fibration. Note that there exists a birational map $\eta : \II_1 \dashrightarrow \mathfrak{N}_1$ defined by $(D,p) \to \Oo_D(p)$ for a smooth conic $D$ and $p\in D$.



\providecommand{\bysame}{\leavevmode\hbox to3em{\hrulefill}\thinspace}
\providecommand{\MR}{\relax\ifhmode\unskip\space\fi MR }
\providecommand{\MRhref}[2]{%
  \href{http://www.ams.org/mathscinet-getitem?mr=#1}{#2}
}
\providecommand{\href}[2]{#2}

\end{document}